\documentclass[a4paper,twoside]{article}
\usepackage{a4}
\usepackage{amssymb}
\usepackage{amsmath}
\usepackage[active]{srcltx}
\usepackage[dvipsnames]{xcolor} 
\usepackage[pagebackref,colorlinks,citecolor=blue,linkcolor=blue,urlcolor=blue]{hyperref}
\usepackage{amsthm}
\usepackage{upref}
\usepackage{graphicx}
\usepackage{subcaption}

\allowdisplaybreaks[2] 
\newcount\minutes \newcount\hours
\hours=\time
\divide\hours 60
\minutes=\hours
\multiply\minutes -60
\advance\minutes \time
\newcommand{\klockan}{\the\hours:{\ifnum\minutes<10 0\fi}\the\minutes}
\newcommand{\tid}{\today\ \klockan}
\newcommand{\prtid}{\smash{\raise 10mm \hbox{\LaTeX ed \tid}}}
\renewcommand{\prtid}{}
\makeatletter
\def\sectionmark#1{} 
\def\subsectionmark#1{}
\newcommand{\sectnr}{\ifnum \c@secnumdepth >\z@
   \thesection.\hskip 1em\relax \fi}
\def\@evenhead{\footnotesize\rm\thepage\hfil\leftmark\hfil\llap{\prtid}}
\def\@oddhead{\footnotesize\rm\rlap{\prtid}\hfil\rightmark\hfil\thepage}
\def\tableofcontents{\section*{Contents} 
 \@starttoc{toc}}
\makeatother
\makeatletter
\def\@biblabel#1{#1.}
\makeatother
\makeatletter
\let\Thebibliography=\thebibliography
\renewcommand{\thebibliography}[1]{\def\@mkboth##1##2{}\Thebibliography{#1}
\addcontentsline{toc}{section}{References}
\frenchspacing 
\setlength{\@topsep}{0pt}
\setlength{\itemsep}{0pt}%
\setlength{\parskip}{0pt plus 2pt}%
}
\makeatother
\makeatletter
\def\mdots@{\mathinner.\nonscript\!.%
 \ifx\next,.\else\ifx\next;.\else\ifx\next..\else
 \nonscript\!\mathinner.\fi\fi\fi}
\let\ldots\mdots@
\let\cdots\mdots@
\let\dotso\mdots@
\let\dotsb\mdots@
\let\dotsm\mdots@
\let\dotsc\mdots@
\def\vdots{\vbox{\baselineskip2.8\p@ \lineskiplimit\z@
    \kern6\p@\hbox{.}\hbox{.}\hbox{.}\kern3\p@}}
\def\ddots{\mathinner{\mkern1mu\raise8.6\p@\vbox{\kern7\p@\hbox{.}}%
    \raise5.8\p@\hbox{.}\raise3\p@\hbox{.}\mkern1mu}}
\makeatother
\makeatletter
\let\Enumerate=\enumerate
\renewcommand{\enumerate}{\Enumerate%
\setlength{\itemsep}{0pt}%
\setlength{\parskip}{0pt plus 1pt}%
\renewcommand{\theenumi}{\textup{(\alph{enumi})}}%
\renewcommand{\labelenumi}{\theenumi}%
}

\makeatother
\makeatletter
\def\@seccntformat#1{\csname the#1\endcsname.\quad}
\makeatother
\makeatletter
\long\def\@makecaption#1#2{%
  \vskip\abovecaptionskip
  \sbox\@tempboxa{ #1. #2}%
  \ifdim \wd\@tempboxa >\hsize
    #1. #2\par
  \else
    \global \@minipagefalse
    \hb@xt@\hsize{\hfil\box\@tempboxa\hfil}%
  \fi
  \vskip\belowcaptionskip}
\makeatother
\RequirePackage{ifthen}
\newcommand{\authortitle}[3]{\author{#1}\title{#2}%
   \ifthenelse{\equal{#3}{}}{\markboth{#1}{#2}}{\markboth{#1}{#3}}}
\newcommand{\auth}[2]{{#1, #2.}}
\newcommand{\art}[6]{{\sc #1, \rm #2, \it #3 \bf #4 \rm (#5), \mbox{#6}.}}
\newcommand{\artin}[3]{{\sc #1, \rm #2,  in #3.}}
\newcommand{\artprep}[3]{{\sc #1, \rm #2, #3.}}

\newcommand{\book}[3]{{\sc #1, \it #2, \rm #3.}}
\newcommand{\AND}{{\rm and }}

\newcommand{\arXiv}[1]{{\tt \href{https://arxiv.org/abs/#1}{arXiv:#1}}}
\RequirePackage{amsthm}
\newtheoremstyle{descriptive}%
  {\topsep}   
  {\topsep}   
  {\rmfamily} 
  {}          
  {\bfseries} 
  {.}         
  { }         
  {}          
\newtheoremstyle{propositional}%
  {\topsep}   
  {\topsep}   
  {\itshape}  
  {}          
  {\bfseries} 
  {.}         
  { }         
  {}          
\newtheoremstyle{remarkstyle}%
  {\topsep}   
  {\topsep}   
  {\rmfamily}  
  {}          
  {\itshape} 
  {.}         
  { }         
  {}          
\theoremstyle{propositional}
\newtheorem{thm}{Theorem}[section]
\newtheorem{prop}[thm]{Proposition}
\newtheorem{lem}[thm]{Lemma}
\newtheorem{cor}[thm]{Corollary}
\newtheorem*{ass}{General Assumptions}
\theoremstyle{descriptive}
\newtheorem{deff}[thm]{Definition}
\newtheorem{remark}[thm]{Remark}
\newtheorem{openprob}[thm]{Open problem}
\makeatletter
\renewenvironment{proof}[1][\proofname]{\par
  \pushQED{\qed}%
  \normalfont
  \trivlist
  \item[\hskip\labelsep
        \itshape
    #1\@addpunct{.}]\ignorespaces
}{%
  \popQED\endtrivlist\@endpefalse
}
\makeatother
\newcommand{\setm}{\setminus}
\renewcommand{\emptyset}{\varnothing}
{\catcode`p =12 \catcode`t =12 \gdef\eeaa#1pt{#1}}      
\def\accentadjtext#1{\setbox0\hbox{$#1$}\kern   
                \expandafter\eeaa\the\fontdimen1\textfont1 \ht0 }
\def\accentadjscript#1{\setbox0\hbox{$#1$}\kern 
                \expandafter\eeaa\the\fontdimen1\scriptfont1 \ht0 }
\def\accentadjscriptscript#1{\setbox0\hbox{$#1$}\kern   
                \expandafter\eeaa\the\fontdimen1\scriptscriptfont1 \ht0 }
\def\accentadjtextback#1{\setbox0\hbox{$#1$}\kern       
                -\expandafter\eeaa\the\fontdimen1\textfont1 \ht0 }
\def\accentadjscriptback#1{\setbox0\hbox{$#1$}\kern     
                -\expandafter\eeaa\the\fontdimen1\scriptfont1 \ht0 }
\def\accentadjscriptscriptback#1{\setbox0\hbox{$#1$}\kern 
                -\expandafter\eeaa\the\fontdimen1\scriptscriptfont1 \ht0 }
\def\itoverline#1{{\mathsurround0pt\mathchoice
        {\rlap{$\accentadjtext{\displaystyle #1}
                \accentadjtext{\vrule height1.593pt}
                \overline{\phantom{\displaystyle #1}
                \accentadjtextback{\displaystyle #1}}$}{#1}}
        {\rlap{$\accentadjtext{\textstyle #1}
                \accentadjtext{\vrule height1.593pt}
                \overline{\phantom{\textstyle #1}
                \accentadjtextback{\textstyle #1}}$}{#1}}
        {\rlap{$\accentadjscript{\scriptstyle #1}
                \accentadjscript{\vrule height1.593pt}
                \overline{\phantom{\scriptstyle #1}
                \accentadjscriptback{\scriptstyle #1}}$}{#1}}
        {\rlap{$\accentadjscriptscript{\scriptscriptstyle #1}
                \accentadjscriptscript{\vrule height1.593pt}
                \overline{\phantom{\scriptscriptstyle #1}
                \accentadjscriptscriptback{\scriptscriptstyle #1}}$}{#1}}}}
\def\cprime{{\mathsurround0pt$'$}}
\newcommand{\Cpt}{{C_{s,p}}}
\newcommand{\Csp}{{C_{s,p}}}
\newcommand{\Bsp}{{B_{s,p}}}
\newcommand{\cpt}{{\capp_{s,p}}}
\newcommand{\csp}{{\capp_{s,p}}}

\DeclareMathOperator{\diam}{diam}
\DeclareMathOperator{\dist}{dist}
\DeclareMathOperator{\capp}{cap}
\newcommand{\grad}{\nabla}
\DeclareMathOperator{\Lip}{Lip}
\newcommand{\Lipc}{{\Lip_c}}
\newcommand{\loc}{_{\rm loc}}
\DeclareMathOperator*{\essliminf}{ess\,lim\,inf}
\DeclareMathOperator{\Tail}{Tail}
\newcommand{\bdry}{\partial}
\newcommand{\bdy}{\bdry}
\newcommand{\simge}{\gtrsim}
\newcommand{\simle}{\lesssim}
\newcommand{\al}{\alpha}
\newcommand{\be}{\beta}
\newcommand{\ga}{\gamma}
\newcommand{\de}{\delta}
\newcommand{\eps}{\varepsilon}
\newcommand{\la}{\lambda}
\newcommand{\Om}{\Omega}
\newcommand{\uhat}{\hat{u}}
\newcommand{\ub}{\bar{u}}
\newcommand{\ut}{\tilde{u}}

\renewcommand{\phi}{\varphi}
\newcommand{\p}{{$p\mspace{1mu}$}}
\newcommand{\R}{\mathbf{R}}
\newcommand{\Rn}{{\R^n}}
\newcommand{\eR}{{\overline{\R}}}
\newcommand{\clB}{\itoverline{B}}
\newcommand{\clE}{\itoverline{E}}
\newcommand{\LL}{\mathcal{L}}
\newcommand{\EE}{\mathcal{E}}
\newcommand{\uP}{\itoverline{P}}     
\newcommand{\vt}{\tilde{v}}
\DeclareMathOperator{\supp}{supp}
\newcommand{\K}{\mathcal{K}}
\newcommand{\A}{\mathcal{A}}
\newcommand{\clG}{\itoverline{G}}
\newcommand{\Vsp}{V^{s,p}}
\newcommand{\VspOm}{V^{s,p}(\Om)}
\newcommand{\Vspo}{V^{s,p}_0}
\newcommand{\VspoOm}{V^{s,p}_0(\Om)}
\newcommand{\Wsp}{W^{s,p}}
\newcommand{\Wsploc}{W^{s,p}\loc}
\newcommand{\Wp}{W^{1,p}}
\newcommand{\La}{\Lambda}
\newcommand{\Omc}{{\Om^c}}
\newcommand{\Gc}{G^c}
\newcommand{\WspRn}{W^{s,p}(\Rn)}
\newcommand{\Schwarz}{\mathcal{S}}
\newcommand{\Et}{\widetilde{E}}

\newcommand{\tcsp}{{\widetilde{\capp}_{s,p}}}
\newcommand{\At}{\tilde{\mathcal{A}}}

\numberwithin{equation}{section}
\newcommand{\imp}{\ensuremath{
\mathchoice{\quad \Longrightarrow \quad}{\Rightarrow}
                {\Rightarrow}{\Rightarrow}}}

\begin{document}

\authortitle{Anders Bj\"orn, Jana Bj\"orn and Minhyun Kim}
{Capacities, Wiener criteria and fine continuity \\ for nonlocal nonlinear equations}
{Capacities, Wiener criteria and fine continuity for nonlocal nonlinear equations}

\author{
Anders Bj\"orn \\
\it\small Department of Mathematics, Link\"oping University, SE-581 83 Link\"oping, Sweden\\
\it \small anders.bjorn@liu.se, ORCID\/\textup{:} 0000-0002-9677-8321
\\
\\
Jana Bj\"orn \\
\it\small Department of Mathematics, Link\"oping University, SE-581 83 Link\"oping, Sweden\\
\it \small jana.bjorn@liu.se, ORCID\/\textup{:} 0000-0002-1238-6751
 \\
 \\
 Minhyun Kim \\
 \it\small Department of Mathematics \& Research Institute for Natural Sciences, \\
 \it\small Hanyang University, 04763 Seoul, Republic of Korea \\
 \it \small minhyun@hanyang.ac.kr, ORCID\/\textup{:} 0000-0003-3679-1775
}

\date{Preliminary version, \today}
\date{}

\maketitle

\noindent{\small
{\bf Abstract}.
In this paper we study nonlocal nonlinear equations of
$s$-fractional \p-Laplacian type in open subsets of 
$\mathbf{R}^n$.
We investigate how the boundary regularity of solutions
depends on the parameters $s$ and $p$,
including the local case $s=1$.
Specifically, we 
show exactly when
regularity for $(s_1, p_1)$
implies regularity for $(s_2, p_2)$.
The proof relies on the equivalence between
Wiener criteria formulated with condenser and Sobolev capacities.
To establish this equivalence, we derive 
precise comparison estimates between the two capacities.

The Wiener integral defines thinness and the fine topology.
We show that every superharmonic function
associated with 
a nonlocal nonlinear operator
is finely continuous. 
Moreover, we prove that polar sets in this fractional setting
coincide with sets of zero capacity.
}

\medskip

\noindent {\small \emph{Key words and phrases}:
Boundary regularity, capacity, fine topology, fractional 
\p-Laplacian, nonlocal nonlinear equation, polar set, thinness, Wiener criterion.
}

\medskip

\noindent {\small \emph{Mathematics Subject Classification} (2020):
Primary:
31C45. 
Secondary:  
31C15, 
35R11, 
35J66. 
}

\medskip

\noindent {\small \emph{Funding}: 
A.~B. resp.\ J.~B. were supported by the Swedish Research Council,
grants 2020-04011 and 2024-04095 resp.\ 2022-04048.
M.~K. was supported 
by the National Research Foundation of Korea (NRF) grant
funded by the Korean government (MSIT) (RS-2026-25481961).
}



\section{Introduction}

In this paper we study solutions (and supersolutions)
for the nonlocal nonlinear equation $\LL u =0$ 
in open subsets of $\mathbf{R}^n$.
The operator $\LL$ is of the form
\begin{equation*}
\LL u(x) = 2 \,\mathrm{p.v.} \int_{\R^n} |u(x)-u(y)|^{p-2} (u(x)-u(y)) k(x, y) \,dy,
\quad 1<p<\infty,
\end{equation*}
where 
$k: \R^n \times \R^n \to [0, \infty]$, $n \ge 1$, is a symmetric measurable kernel
that satisfies the ellipticity condition
\begin{equation} \label{eq-comp-(x,y)}
\frac{\Lambda^{-1}}{|x-y|^{n+sp}} \leq k(x, y) \leq \frac{\Lambda}{|x-y|^{n+sp}},
\end{equation}
with $0<s<1$ and $\La \ge1$.
Note that $\LL$ is the fractional \p-Laplacian $(-\Delta_p)^s$ when 
$k(x,y)=|x-y|^{-n-sp}$.

The following important Wiener criterion for regular boundary points
was recently obtained by 
Kim--Lee--Lee~\cite{KLL23}, \cite{KLL25} for general $\LL$ as here, and 
independently by Bj\"orn~\cite{JBWien} for $(-\Delta)^s$  
(with~$p=2$)
using the Caffarelli--Silvestre extension~\cite{CafSil}.
Roughly, boundary regularity means that
the solution of the Dirichlet problem with any continuous exterior data
always attains its Dirichlet data as a limit at a certain boundary point,
see Definition~\ref{deff-reg-pt} and the discussion after it.

\begin{thm} \label{thm-Wiener}
\textup{(Wiener criterion \cite[Theorem~1.1]{KLL23}, 
\cite[Remark~1.5]{KLL25})}
Let $\Om$ be a bounded open set.
Then a boundary point $x_0 \in \bdy \Om$ is regular\/ 
\textup(with respect to $\LL$ and $\Om$\textup) if and only if 
\begin{equation} \label{eq-Wiener}
 \int_0^1 \biggl(
  \frac{\csp(\clB(x_0,r)\setm \Om,B(x_0,2r))}{r^{n-sp}}
          \biggr)^{1/(p-1)} \, \frac{dr}{r} = \infty,
\end{equation}
where
the condenser capacity $\csp$ is defined in Definition~\ref{deff-cpt} below.
\end{thm}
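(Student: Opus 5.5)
This is the Wiener criterion of Kim--Lee--Lee, which the paper quotes rather than proves. To prove it from scratch I would follow the classical potential-theoretic route of Kilpel\"ainen--Mal\'y, adapted to nonlocal operators. The two directions are handled separately, and both rest on pointwise estimates for \emph{capacitary potentials}. For a compact set $K\subset B(x_0,2r)$, let $u_K$ be the $\LL$-potential of $K$ in $B(x_0,2r)$: the $\LL$-superharmonic function that equals $1$ on $K$ quasi-everywhere, vanishes outside $B(x_0,2r)$, and solves $\LL u_K=\mu_K$ with $\mu_K(K)\simeq \csp(K,B(x_0,2r))$. By the energy identity and \eqref{eq-comp-(x,y)}, $\mu_K(K)$ is comparable to the capacity up to constants depending only on $n,s,p,\La$.

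\emph{Sufficiency.} Assume \eqref{eq-Wiener} holds and let $f$ be continuous exterior data. I would build barrier-type estimates by iteration. Set $r_j=2^{-j}$ and $K_j=\clB(x_0,r_j)\setm\Om$. The key lemma is a lower bound for potentials:
\[
\inf_{B(x_0,r_j/2)} u_{K_j} \ge c\,\biggl(\frac{\csp(K_j,B(x_0,2r_j))}{r_j^{n-sp}}\biggr)^{1/(p-1)}.
\]
This follows from a weak Harnack inequality for supersolutions combined with a nonlocal tail estimate. Using these potentials and the comparison principle, I would show that the oscillation of the solution $u$ near $x_0$ satisfies
\[
\operatorname{osc}_{B(x_0,r_{j+1})}u \le (1-c\,\delta_j)\operatorname{osc}_{B(x_0,r_j)}u + \text{tail terms},
\]
where $\delta_j$ is the Wiener summand at scale $j$. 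Divergence of $\sum\delta_j$ then forces $u(x)\to f(x_0)$. The tail terms are the new feature compared with the local case. Because the solution is bounded by $\sup|f|$, each tail term is at most a geometric factor times the oscillation at earlier scales, so it can be absorbed by a careful induction.

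\emph{Necessity.} Assume the integral in \eqref{eq-Wiener} converges. I would show that the potential $u_E$ of $E=\clB(x_0,r)\setm\Om$, for small $r$, satisfies $\liminf_{x\to x_0,\,x\in\Om} u_E(x)<1$, which contradicts regularity. The tool is a Wolff potential upper bound for $\LL$-superharmonic functions, the nonlocal version due to Kuusi--Mingione--Sire:
\[
u_E(x_0)\le C\,W^{\mu}_{s,p}(x_0,2r) + C\inf_{B(x_0,r)} u_E + C\,\Tail(u_E;x_0,r).
\]
Splitting $\mu$ dyadically bounds the Wolff potential by the tail of the Wiener sum, which is small when $r$ is small. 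The remaining terms are controlled by the smallness of $\csp(E,\cdot)$. This gives a point of $\Om$ near $x_0$ where the solution with data $u_E$ stays below $1-\eps$.

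The main obstacle is the sufficiency step. There, the nonlocal tail contributions must be controlled uniformly across scales while the decay factor $1-c\,\delta_j$ is preserved. I would first try to obtain this by applying the weak Harnack inequality to $\min(u,M)$-type truncations, with tails measured in $L^{p-1}$ weighted norms.
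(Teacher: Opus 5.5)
The paper does not prove this theorem; it quotes it from Kim--Lee--Lee, and for $p=2$ from J.~Bj\"orn's proof via the Caffarelli--Silvestre extension. Your architecture is the standard one: a Maz{\cprime}ya-type decay estimate built from capacitary potentials and weak Harnack for sufficiency, and a Kilpel\"ainen--Mal\'y argument with a nonlocal Wolff bound for necessity. However, the step you single out as the new nonlocal difficulty fails as you describe it.

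\textbf{The tail absorption in the sufficiency step.} Suppose each tail term is only bounded by a geometric combination of earlier oscillations. Then the recursion
\[
\operatorname{osc}_{j+1}\le(1-c\de_j)\operatorname{osc}_j+C\sum_{i<j}\theta^{j-i}\operatorname{osc}_i
\]
is satisfied by the constant sequence $\operatorname{osc}_j\equiv1$ whenever $c\de_j\le C\theta$. Divergence of $\sum_j\de_j$ is compatible with $\de_j\to0$ (e.g.\ $\de_j=1/j$). Enlarging the scale ratio only replaces $\theta$ by another fixed constant, while the one-step gain stays a single-scale $c\de_j$. So no induction can extract decay from this bound.

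\textbf{The missing idea.} Only the \emph{excess over the current supremum} may enter the tail, and the one-step inequalities must be \emph{summed}, not multiplied.
\begin{enumerate}
\item Let $a=f(x_0)+\eps$ and $M_j=\sup_{B(x_0,r_j)}(u-a)_+$. Run the one-step estimate for $w_j=M_j-(u-a)$. This function is $\ge0$ in $B(x_0,r_j)$, and $\ge M_j$ on $K_j$ for large $j$.
\item Its negative part satisfies $(w_j)_-\le M_i-M_j$ in $B(x_0,r_i)$ for $i<j$, and $(w_j)_-\simle\sup|f|$ everywhere.
\item With $D_k=M_k-M_{k+1}$ this gives $\Tail((w_j)_-;x_0,r_j)\simle\sum_{k<j}\theta^{j-k}D_k+\theta^j\sup|f|$ for some $\theta\in(0,1)$. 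For $p<2$ you first need subadditivity of $t\mapsto t^{p-1}$ and H\"older.
\item The one-step inequality becomes $D_j\ge c\de_jM_j-C\sum_{k<j}\theta^{j-k}D_k-C\theta^j\sup|f|$. Summing over $j$ and using $\sum_jD_j\le M_0$ yields $\sum_j\de_jM_j<\infty$. Since $M_j$ is nonincreasing and $\sum_j\de_j=\infty$, this forces $M_j\to0$.
\end{enumerate}
The one-step estimate itself cannot come from a plain comparison of $w_j$ with $M_ju_{K_j}$, because $w_j$ is negative outside the ball. It has to be derived so that $\Tail((w_j)_-;x_0,r_j)$ enters additively, as in the weak Harnack inequality with tail of Di Castro--Kuusi--Palatucci.

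\textbf{Necessity.} ``Splitting $\mu$ dyadically'' hides the real content of this direction. The only free bound, $\mu(B(x_0,t))\le\mu(E)$, makes $\int_0^{2r}(\mu(B(x_0,t))/t^{n-sp})^{1/(p-1)}\,dt/t$ diverge when $sp\le n$. You therefore need a localization lemma for the capacitary measure, of the type
\[
\mu(B(x_0,t))\simle\csp(E\cap\clB(x_0,ct),B(x_0,2ct)).
\]
Its nonlocal proof must control the interaction of $u_E$ near $B(x_0,t)$ with the rest of $E$. Finally, $u_E$ is not continuous data, so the contradiction with regularity should go through comparison. If $x_0$ were regular, comparing $u_E$ with the solution for a continuous $\psi$ with $\psi(x_0)=1$, supported near $x_0$, would force $\essliminf_{y\to x_0}u_E(y)\ge1$. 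By lsc-regularization this gives $u_E(x_0)\ge1$, contradicting the Wolff bound.
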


It follows directly that
regularity is independent of the kernel $k$, 
and only depends on $s$ and $p$.
A natural question is how it
depends on $s$ and $p$. 
We answer this question in the following way.
For  a comparison with local equations, the \p-Laplacian (based on $W^{1,p}$)
is (formally) included with $s=1$,
see Section~\ref{sect-different-sp} for more details.
In particular, if $s_2 \ge s_1$ and $p_2 \ge p_1$, then
\eqref{eq-imp-reg-sp-intro} holds, i.e.\ 
regularity is more difficult for smaller $s$ and $p$, but the 
full picture is 
relatively complicated.

\begin{thm} \label{thm-reg-inclusion-intro}
Consider $0 < s_j \leq 1 < p_j$, $j=1,2$, with $(s_1, p_1) \ne (s_2, p_2)$.
The implication
\begin{equation}   \label{eq-imp-reg-sp-intro}
\text{$x_0$ is regular for $(s_1,p_1)$}
\imp
\text{$x_0$ is regular for $(s_2,p_2)$}
\end{equation}
holds, for all bounded open sets $\Om$ and boundary points
$x_0 \in \bdy \Om$,
if and only if
any of the following  mutually disjoint cases holds\/\textup{:}
\begin{enumerate}
\item
  $s_2 p_2 > n$,
\item
  $s_1 p_1 = s_2 p_2 = n$ and $p_1> p_2$,
\item
  $s_1 p_1 < s_2 p_2 = n$,
\item
  $s_1 p_1 < s_2 p_2 < n$ and 
$\displaystyle
  \frac{s_1(p_2-1)+n}{p_2} 
  \le  
  \frac{s_2(p_1-1)+n}{p_1}.
$ 
\end{enumerate}  
\end{thm}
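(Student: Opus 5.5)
The plan is to turn regularity into a statement about Wiener integrals with Sobolev/Bessel-type capacities. There the dependence on $(s,p)$ is governed by well-known embedding and comparison results of Adams--Hedberg type.

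\textbf{Step 1: reduction to Sobolev capacity.} First I will show that the Wiener integral \eqref{eq-Wiener} with the condenser capacity $\csp(\clB(x_0,r)\setm\Om,B(x_0,2r))$ diverges if and only if the analogous integral with the Sobolev capacity $C_{s,p}$ (or Bessel capacity) of $E_r=\clB(x_0,r)\setm\Om$ diverges. For $s=1$ this is classical. For $0<s<1$ I will prove two-sided comparison estimates $\csp(E,B(x_0,2r))\simeq C_{s,p}(E)$ for $E\subset \clB(x_0,r)$, $r\le1$, when $sp<n$. When $sp\ge n$ the comparison degenerates: a logarithmic correction appears when $sp=n$, and when $sp>n$ every point has positive capacity. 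I expect to need a cutoff argument for the direction $C_{s,p}\lesssim\csp$. For the reverse direction I will use the fact that the nonlocal tail of a test function supported in $B(x_0,2r)$ is controlled. In the case $sp>n$, points have positive capacity comparable to $r^{n-sp}$, so the integrand is bounded below. Hence every boundary point is regular, which gives case (a) for the ``if'' direction.

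\textbf{Step 2: the sufficiency cases (b)--(d).} Here I use the capacitary Wiener criterion in the form
\[
\sum_k \bigl(2^{k(n-sp)}C_{s,p}(E_{2^{-k}})\bigr)^{1/(p-1)}=\infty .
\]
I then invoke comparisons between $C_{s_1,p_1}$ and $C_{s_2,p_2}$ of the kind known for Bessel capacities, using Adams--Hedberg's results on the equivalence of Wiener series and the Sobolev embeddings $W^{s_2,p_2}\hookrightarrow W^{s_1,p_1}$ on balls. Case (d) is precisely the numerical condition under which $C_{s_1,p_1}(E)^{1/(p_1-1)}$ at scale $r$ can be dominated, after Hölder's inequality on the series, by a power of $C_{s_2,p_2}(E)$ at scale $r$. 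The idea is to compare the capacities on each dyadic annulus through the embedding with scaling $r^{(n-s_1p_1)/\cdots}$. A Hölder or summation argument (the one from Adams--Hedberg, Theorem 6.3.?) then transfers divergence. In case (c), divergence of the $(s_1,p_1)$ series forces divergence of the borderline $s_2p_2=n$ series because of the logarithmic form of that capacity. In case (b) I use the monotonicity in $p$ of the borderline Wiener condition.

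\textbf{Step 3: necessity.} For every $(s_j,p_j)$ not covered by (a)--(d), I construct a counterexample set. I take $E=\bigcup_k \clB(x_k,\rho_k)$ or $E=\bigcup_k \clB(x_k,\rho_k)\cap$ (a suitable Cantor-type set or plate) with $|x_k|=2^{-k}$, and set $\Om=B(0,1)\setm E$. The radii $\rho_k$ are tuned so that the $(s_1,p_1)$ Wiener sum diverges while the $(s_2,p_2)$ sum converges. For balls the capacities are explicit powers $\rho^{n-sp}$ (or $\log(1/\rho)^{1-p}$ when $sp=n$), and the resulting exponent condition reproduces the inequality in (d). Sets of higher codimension, such as $m$-dimensional discs, are needed to show that (d) is sharp rather than just a condition comparing $s_1p_1$ with $s_2p_2$. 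The remaining failures follow from simpler examples:
\begin{itemize}
\item $s_1p_1>s_2p_2$ with $s_2p_2\le n$ (single points or small balls);
\item $s_1p_1=s_2p_2<n$ together with $(s_1,p_1)\ne(s_2,p_2)$;
\item $s_1p_1=s_2p_2=n$ with $p_1\le p_2$, using logarithmic radii.
\end{itemize}

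\textbf{Main obstacle.} I expect the hardest step to be Step 2(d) together with its sharpness in Step 3. This requires a precise capacity comparison between different $(s,p)$, including fractional $s$ at $s=1$. It must be sharp enough that the single inequality in (d) emerges, and I would rely on Adams--Hedberg's comparison of Bessel capacities there.
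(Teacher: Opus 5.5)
Your overall route matches the paper's: Wiener criterion, then the Sobolev-capacity Wiener integral, then Adams--Hedberg's Theorem~B for (b)--(d), then explicit sets for the converse. Case (a) is handled correctly. Two steps, however, contain genuine gaps.

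\textbf{Step 1 at $sp=n$.} Saying that ``a logarithmic correction appears'' is not an argument, and no correction of the form $\csp(E,B(x_0,2r))\simeq\phi(r)\,\Csp(E)$ can exist.
\begin{itemize}
\item For $E=\clB(x_0,r)$ the left side is $\simeq 1$, while $\Csp(E)\simeq(\log(1/r))^{1-p}$.
\item For $E=\clB(x_0,\rho)$ with $\rho\to0$, both capacities are $\simeq(\log(1/\rho))^{1-p}$.
\end{itemize}
Your route needs this step in case (b). There you must pass from divergence of the condenser integral for $(s_1,p_1)$, with $s_1p_1=n$, to divergence of the Sobolev integral before Adams--Hedberg applies. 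The opposite passage, on the $(s_2,p_2)$ side, is just $\Csp(E)\simle\csp(E,2B)$ (Lemma~\ref{lem-cp-Cp}).

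The paper supplies exactly this missing piece as Lemma~\ref{lem-Csp-min-cap-intro}: $\Csp(E)\simeq\min\{\csp(E,2B),\Csp(B)\}$. It is proved in Theorem~\ref{thm-Csp-min-cap} by a dichotomy on the annulus $4B\setm\clB(x_0,2r)$, using the Adams--Meyers Harnack inequality for the nonlinear Bessel potential $V_K$ there:
\begin{itemize}
\item If $\sup V_K\ge\frac12$ on the annulus, then $V_K/\inf V_K$ tests the capacity of a ball of radius $r$.
\item Otherwise $2(V_K-\sup V_K)_+$, cut off outside $4B$, is admissible for $\csp(K,2B)$ up to a set of zero capacity. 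Its seminorm is bounded by Stein's comparison of the $\Wsp$- and Bessel potential norms, valid for $p\ge2$.
\item For $n=1$ a separate reflection argument is used.
\end{itemize}
Even with this lemma, Theorem~\ref{thm-equiv-Wiener-int} needs a second dichotomy. If $\csp(E_r,B(x_0,2r))>\Csp(B(x_0,r))$ along radii tending to $0$, one extracts radii with $r_{j+1}\le r_j^2$. Nestedness together with $\Csp(B(x_0,r))\simeq(\log(1/r))^{1-p}$ then forces divergence.

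For $sp<n$ you also place the difficulty in the wrong direction. $\Csp\simle\csp(\cdot,2B)$ follows from Poincar\'e alone. It is $\csp(E,2B)\simle\Csp(E)$ that needs a cutoff, and the resulting term $r^{-sp}\|v\|^p_{L^p(2B)}$ must be absorbed by the fractional Hardy (or Sobolev) inequality; tails are not the issue.

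\textbf{Step 3.} The ``simpler examples'' cannot handle several of the cases you assign to them.
\begin{itemize}
\item For $sp<n$, one ball $\clB(x_j,r_j)$ per dyadic annulus gives Wiener terms $\simeq(2^jr_j)^{(n-sp)/(p-1)}$ (annular form, Proposition~\ref{prop-half-pizza-sp<n}). Such sets separate $(s_1,p_1)$ from $(s_2,p_2)$ if and only if $(p_1-1)(n-s_2p_2)>(p_2-1)(n-s_1p_1)$, i.e.\ exactly when the inequality in (d) fails.
\item They are therefore useless for $s_1p_1=s_2p_2<n$ with $p_1<p_2$, and for $s_2p_2<s_1p_1<n$ whenever that inequality holds.
\item At $sp=n$ a single ball contributes $\simeq 1/\log(2^{-j}/r_j)$ for every $p$. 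So the per-ball computation you describe cannot separate $p_1<p_2$.
\item Points help only when $s_1p_1>n$.
\end{itemize}

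The missing tool is the Adams--Meyers theorem (Adams--Hedberg, Theorem~5.5.1). It gives a compact $K$ in an annulus with $C_{s_2,p_2}(K)=0<C_{s_1,p_1}(K)$ whenever $s_2p_2<s_1p_1\le n$, or $s_1p_1=s_2p_2\le n$ with $p_1<p_2$. Then $\Om=B(0,1)\setm(\{0\}\cup\bigcup_j 2^{-j}K)$ works: regularity for $(s_1,p_1)$ follows by scaling of $\capp_{s_1,p_1}$, and irregularity for $(s_2,p_2)$ from $C_{s_2,p_2}(B(0,1)\setm\Om)=0$. Note that $0$ must be included in the removed set, otherwise $\Om$ is not open.

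Conversely, discs are not needed for the sharpness of (d). Balls with $r_j=2^{-j}j^{-(p_1-1)/(n-s_1p_1)}$ already do it.

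Finally, the embedding heuristic in Step~2 only yields capacity comparisons with exponent $p_1/p_2$. Those transfer divergence only when $p_2\ge p_1$. The general case (d) has to be taken from Adams--Hedberg's Theorem~B, as the paper does.
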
  

Using the Wiener criterion this essentially follows from results by
Adams--Hed\-berg~\cite[Theorem~B]{AH84} (see also Adams--Hedberg~\cite[6.5.8]{AH}).
However, \cite{AH84} and~\cite{AH}
used a Wiener integral of the type 
\eqref{eq-int-Csp=infty} below with the Sobolev capacity $\Csp$,
rather than the condenser capacity $\csp$ as in \eqref{eq-Wiener}.
It may be 
well known to the experts (or at least was in the 1990s)
that these two Wiener type integrals diverge/converge simultaneously
(provided that
$sp \le n$), but there does not seem to be any published proof.
In the local case, for capacities associated with $W^{1,p}$, 
the equivalence follows
from Theorem~2.49 in Mal\'y--Ziemer~\cite{MZ}.
The proof is quite standard and easy for $sp<n$, while for $sp=n$ it 
requires the following precise 
comparison for the Sobolev and condenser capacity.
See Section~\ref{sec-capacity} for definitions,
characterizations and properties of the capacities.
In particular, we show there that both capacities are so-called
Choquet capacities,
which is used in the proof of Theorem~\ref{thm-superh-finecont}.

\begin{lem}   \label{lem-Csp-min-cap-intro}
Let $E\subset   \itoverline{B(x_0,r)}$,  where $0 < r \le 1$.
Then
\[
\Csp(E) \simeq \min\{ \csp(E, B(x_0,2r)), \Csp(B(x_0,r)) \}.
\]
\end{lem}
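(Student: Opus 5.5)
We prove the two inequalities $\Csp(E)\simle \min\{\cdots\}$ and $\Csp(E)\simge\min\{\cdots\}$ separately. We use the standard definitions: $\Csp(E)=\inf\|u\|_{\WspRn}^p$ over $u\ge1$ on a neighbourhood of $E$, where $\|u\|^p_{\WspRn}=\|u\|_{L^p}^p+[u]^p_{\Wsp(\Rn)}$. The condenser capacity $\csp(E,B(x_0,2r))$ is the infimum of the Gagliardo seminorm $[u]^p_{\Wsp(\Rn)}$ over $u\ge1$ near $E$ with $u=0$ outside $B(x_0,2r)$. All implicit constants should depend only on $n,s,p$. The restriction $r\le1$ is what prevents the $L^p$ term from dominating.

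\emph{Upper bound.} Monotonicity gives $\Csp(E)\le\Csp(\clB(x_0,r))\simeq \Csp(B(x_0,r))$. The comparability of the open and closed balls follows from a scaling/dilation argument or from outer regularity, and is established in Section~\ref{sec-capacity}. For the other term, take an admissible $u$ for $\csp(E,B(x_0,2r))$ with $0\le u\le1$. Since $u$ vanishes outside $B(x_0,2r)$, the fractional Poincar\'e inequality for functions supported in a ball gives
\[
\|u\|_{L^p}^p\simle r^{sp}[u]^p_{\Wsp(\Rn)}\le [u]^p_{\Wsp(\Rn)},
\]
using $r\le1$. Hence $\Csp(E)\simle\csp(E,B(x_0,2r))$.

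\emph{Lower bound.} This is the main step. Take $u$ admissible for $\Csp(E)$ and, by truncation, assume $0\le u\le1$. Let $\eta$ be a Lipschitz cutoff with $\eta=1$ on $B(x_0,r)$ (or on a slightly larger ball containing a neighbourhood of $E$), $\supp\eta\subset B(x_0,\tfrac32 r)$ and $|\nabla\eta|\simle1/r$. Then $v=\eta u$ is admissible for $\csp(E,B(x_0,2r))$. The standard product estimate gives
\[
[\eta u]^p_{\Wsp}\simle [u]^p_{\Wsp}+\int_{\Rn}|u(x)|^p\int_{\Rn}\frac{|\eta(x)-\eta(y)|^p}{|x-y|^{n+sp}}\,dy\,dx \simle [u]^p_{\Wsp}+r^{-sp}\|u\|^p_{L^p}.
\]
Since $r^{-sp}$ may be large, this does not bound $\csp$ by $\Csp(E)$ directly, and we split into two cases.

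If $\|u\|^p_{L^p(B(x_0,2r))}\le c\,r^{sp}\|u\|^p_{\Wsp}$, we are done, because then $\csp(E,B(x_0,2r))\simle\|u\|^p_{\Wsp}$. (Only the $L^p$ mass on $\supp\eta$ enters the local part of the cutoff term; the tail part gives $\|u\|^p_{L^p}\int_{|y-x|>r} |y-x|^{-n-sp}\,dy \simeq r^{-sp}\|u\|^p_{L^p}$ only for $x$ near the ball, so this localization needs some care.)

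Otherwise $u$ has large average on $B(x_0,2r)$, and we want to show $\|u\|^p_{\Wsp}\simge\Csp(B(x_0,r))$. Let $a$ be the mean of $u$ on $B=B(x_0,2r)$. By the fractional Poincar\'e inequality on $B$,
\[
\|u-a\|^p_{L^p(B)}\simle r^{sp}[u]^p_{\Wsp(B)},
\]
so failure of the first case forces $a^p r^n\simge r^{sp}\|u\|^p_{\Wsp}$, i.e.\ the average $a$ is not small relative to the norm. Rather than use this inequality directly, we argue via a min-type truncation: set $w=\min\{u/a',1\}$ for a suitable level $a'\simeq a$. We then want $w\ge1$ on a large portion of $B(x_0,r)$, and to compare with the capacity of a ball, which we expect to satisfy $\Csp(B(x_0,r))\simeq \max\{r^n, r^{n-sp}\}$-type bounds (for $r\le1$ and $sp\neq n$; logarithmic when $sp=n$).

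The cleanest route for this comparison is probably the following. First, $\csp(\clB(x_0,r),B(x_0,2r))\simeq r^{n-sp}$. Second, $\Csp(B(x_0,r))\simeq\min\{r^{n-sp},\csp(\clB,B(x_0,2r))\}\simeq r^{n-sp}$ for $r\le1$, which follows from the upper bound together with a direct lower bound. For the lower bound, if $u\ge1$ on a set of half the measure of the ball, then $\|u\|^p_{L^p}\gtrsim r^n$. If instead $u$ is small on much of the ball while $u\ge1$ on $E$, a Poincar\'e/level-set argument should show that the seminorm part still controls $\csp(E,\cdot)$ up to constants.

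The hardest point is this dichotomy when $sp\ge n$. There capacities of points are positive, and $\Csp(B(x_0,r))\simeq r^{n-sp}$ fails for small $r$. For $sp>n$ it is $\simeq1$, and for $sp=n$ it is $\simeq(\log(2/r))^{1-p}$. Both the case split and the target value must therefore be computed precisely in these regimes. I would first establish the explicit two-sided estimates for $\Csp(B(x_0,r))$ in all three regimes, then run the cutoff/Poincar\'e dichotomy above, choosing the threshold $c$ accordingly.
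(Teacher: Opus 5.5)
Your upper bound is fine: it is monotonicity plus Poincar\'e, i.e.\ Lemma~\ref{lem-cp-Cp}. Your first case is exactly the cutoff estimate of Lemma~\ref{lem-multiply-by-Lip}. The lower bound, however, is not proved, and for $sp=n$ your dichotomy cannot be completed, because the second alternative does not imply $\|u\|^p_{\Wsp(\Rn)}\simge\Csp(B)$.

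Here is a counterexample. Let $B=B(x_0,r)$ with $r$ small. Let $0\le\phi\le1$ be admissible for $\csp(\itoverline{B},B(x_0,1))$, with $\phi=1$ on $B$ and $\|\phi\|^p_{\Wsp(\Rn)}\simle(\log(1/r))^{1-p}$ (Lemma~\ref{lem-cap} plus Poincar\'e). Let $E\subset\itoverline{B}$ have tiny capacity, e.g.\ a point. Take $u_1$ admissible for $\Csp(E)$ with negligible norm, and set $u=\max\{u_1,\eps\phi\}$.
\begin{itemize}
\item By Lemma~\ref{lem-A3}, $\|u\|^p_{\Wsp(\Rn)}\le\|u_1\|^p_{\Wsp(\Rn)}+\eps^p\|\phi\|^p_{\Wsp(\Rn)}$.
\item Hence $c\,r^{n}\|u\|^p_{\Wsp(\Rn)}$ is at most about $c\,r^n\eps^p(\log(1/r))^{1-p}$, while $\|u\|^p_{L^p(2B)}\ge\eps^p|B|$.
\item So $u$ lies in your second case for every fixed $c$ once $r$ is small.
\item Yet $\|u\|^p_{\Wsp(\Rn)}\simle\eps^p\Csp(B)$, since $\Csp(B)\simeq(\log(1/r))^{1-p}$, and $\eps$ is arbitrary.
\end{itemize}
Letting $c$ grow with $r$ does not help, since your first case then loses the factor $c$. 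The reason is that in the critical case a cutoff at scale $r$ costs roughly the $p$-th power of the size of $u$ near $2B$. A function can be small but non-negligible there at a cost far below $\Csp(B)$. Your mean-value and truncation step correspondingly only gives $\|u\|^p\simge a^p\Csp(B)$ with $a$ possibly small. Separately, the bound "$u\ge1$ on half the ball" yields $r^n$, not $r^{n-sp}$.

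What is missing differs by regime.
\begin{itemize}
\item For $sp<n$ no split is needed. The fractional Hardy (or Sobolev) inequality gives $r^{-sp}\|u\|^p_{L^p(2B)}\simle[u]^p_{\Wsp(\Rn)}$ for \emph{every} $u\in\Wsp(\Rn)$. So your first case always occurs and $\csp(E,2B)\simle\Csp(E)$; this is Lemma~\ref{lem-comp-cap}.
\item For $sp>n$, simply $\Csp(B)\simle1\simle\Csp(E)$ when $E\ne\emptyset$.
\item For $sp=n$ you need two ideas absent from your plan. First, subtract a level instead of multiplying by a cutoff. Second, do this for a capacitary potential with a Harnack inequality, not for an arbitrary admissible function.
\end{itemize}

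For $sp=n$ and $n\ge2$, the paper takes the Bessel capacitary potential $V_K$. The Adams--Meyers Harnack inequality and connectedness of $A_r=4B\setm\itoverline{2B}$ give $M:=\sup_{A_r}V_K\simle\inf_{A_r}V_K$.
\begin{itemize}
\item If $M\ge\frac12$, then $V_K\simge1$ on a ball of radius $r$ inside $A_r$, so $\Csp(B)\simle\Csp(K)$.
\item If $M<\frac12$, then $2(V_K-M)_+$ (set to $0$ outside $4B$) is admissible for $\csp(K,2B)$ up to a set of zero capacity. It vanishes on all of $A_r$, so its seminorm is controlled by $[V_K]_{\Wsp(4B)}$ with no cutoff cost. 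Stein's comparison of the $\Wsp$ and Bessel norms, valid since $p>2$, then gives $\csp(K,2B)\simle\Csp(K)$.
\end{itemize}

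The paper treats $n=1$ by splitting on $u_+(2r)+u_+(-2r)$ for an arbitrary admissible $u$. That argument is open to the same objection. Points have zero capacity when $sp=n$, so a spike of arbitrarily small norm at $2r$ forces the first alternative. There too the split must be made for a function enjoying a Harnack-type property.
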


This key lemma
implies the following comparison 
between Wiener criteria with 
different capacities.
Because of its connection to fine topology 
we formulate it for more general sets as follows.

\begin{thm}  \label{thm-equiv-Wiener-int}
Let $x_0 \in \Rn$.
For $0<r<1$, let $E_r \subset \itoverline{B(x_0,r)}$   
be arbitrary 
nested sets such that $E_r \subset E_t$ if $0 <r <t<1$.
If $sp\le n$, then 
\begin{equation}   \label{eq-int-csp=infty}
  \int_0^1 \biggl(\frac{\csp(E_r,B(x_0,2r))}{r^{n-sp}}
     \biggr)^{1/(p-1)}       \frac{dr}{r} =\infty
\end{equation} 
holds if and only if
\begin{equation}   \label{eq-int-Csp=infty}
  \int_0^1 \biggl(\frac{\Csp(E_r)}{r^{n-sp}}
     \biggr)^{1/(p-1)}       \frac{dr}{r} =\infty.
\end{equation} 
\end{thm}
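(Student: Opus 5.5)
The plan is to use Lemma~\ref{lem-Csp-min-cap-intro} with $E=E_r$, which gives
\[
\Csp(E_r) \simeq \min\{\csp(E_r,B(x_0,2r)),\Csp(B(x_0,r))\}
\quad\text{for } 0<r<1 .
\]
The implication \eqref{eq-int-Csp=infty}$\Rightarrow$\eqref{eq-int-csp=infty} is immediate from this, since $\Csp(E_r)\lesssim \csp(E_r,B(x_0,2r))$. For the converse, I first compute $\Csp(B(x_0,r))$ for small $r$. Scaling and standard test functions should give $\Csp(B(x_0,r))\simeq r^{n-sp}$ when $sp<n$, and $\Csp(B(x_0,r))\simeq (\log(1/r))^{1-p}$ when $sp=n$ (for $r\le\frac12$, say). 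Both are standard and presumably recorded among the capacity estimates in Section~\ref{sec-capacity}.

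\emph{Case $sp<n$.} Here $\csp(E_r,B(x_0,2r))\le \csp(\clB(x_0,r),B(x_0,2r))\simeq r^{n-sp}$, so the minimum is comparable to $\csp(E_r,B(x_0,2r))$. The two integrands are therefore comparable and the integrals diverge simultaneously. No monotonicity is needed in this case.

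\emph{Case $sp=n$.} Write $a(r)=\csp(E_r,B(x_0,2r))$, which is bounded by $\csp(\clB(x_0,r),B(x_0,2r))\simeq 1$. Set $b(r)=(\log(1/r))^{1-p}$. The integrand of \eqref{eq-int-Csp=infty} is then comparable to $\min\{a(r),b(r)\}^{1/(p-1)}$, and we must show that $\int_0^1 a^{1/(p-1)}\,dr/r=\infty$ forces $\int_0^1 \min\{a,b\}^{1/(p-1)}\,dr/r=\infty$. Suppose the latter is finite.

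Since $b(r)^{1/(p-1)}=1/\log(1/r)$ and $\int_0 dr/(r\log(1/r))=\infty$, the set $G=\{r: a(r)>b(r)\}$ must be "thin" in $dr/r$ near $0$. I expect this step to be the main obstacle, and the nestedness of $E_r$ is what resolves it.

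The idea is to show $a(r)$ is quasi-monotone in $r$ on dyadic scales. For $r\le t\le 2r$, the inclusion $E_r\subset E_t$ together with comparability of condensers $B(x_0,2r)$ versus $B(x_0,2t)$ should give $a(r)\lesssim a(t)$. I would verify this either through the standard estimate $\csp(E,B(x_0,2r))\simeq \csp(E,B(x_0,4r))$ for $E\subset\clB(x_0,r)$, or directly via Lemma~\ref{lem-Csp-min-cap-intro} again. Discretizing at $r_k=2^{-k}$ with $a_k=a(r_k)$ and $b_k\simeq k^{1-p}$, the two conditions become
\[
\sum_k a_k^{1/(p-1)}=\infty \quad\text{and}\quad \sum_k \min\{a_k,b_k\}^{1/(p-1)}<\infty .
\]
The second sum gives $\sum_{k\in G}1/k<\infty$ and $\sum_{k\notin G}a_k^{1/(p-1)}<\infty$, so $\sum_{k\in G}a_k^{1/(p-1)}=\infty$ with $a_k\lesssim1$. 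This forces $G$ to contain infinitely many indices, yet $\sum_{k\in G}1/k<\infty$.

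To reach a contradiction, I use the quasi-monotonicity $a_{k+1}\lesssim a_k$: since $E_r$ shrinks as $r\to0$, a large capacity at scale $k$ propagates backwards. Having $a_k\ge b_k\simeq k^{1-p}$ at some $k$ gives $a_j\gtrsim$ (relative capacity of $E_{r_k}$ in $B(x_0,2r_j)$) for $j\le k$. The relative $n/s$-capacity of a set in $B(x_0,r_k)$ viewed in the larger ball $B(x_0,2r_j)$ decays only like $\bigl(\log(r_j/r_k)\bigr)^{1-p}$ at worst. Consequently, for $j\in[k/2,k]$ we get $a_j\gtrsim \min\{1,(k-j)^{1-p}\}\,a_k$-type bounds, and in particular $a_j\gtrsim k^{1-p}\simeq b_j$. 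This makes all $j\in[k/2,k]$ (up to constants) lie in the "large" regime, which contributes $\sum_{k/2}^k 1/j\simeq \log2$ infinitely often. That contradicts finiteness, and I would handle the constant fudge by replacing $G$ with $\{a\ge c\,b\}$.

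The delicate computation is this logarithmic decay estimate for condenser capacities at $sp=n$, namely
\[
\csp\bigl(E,B(x_0,R)\bigr)^{1/(1-p)}\le \csp\bigl(E,B(x_0,\rho)\bigr)^{1/(1-p)}+C\log(R/\rho),
\]
a series-type inequality. I would try to derive it from Lemma~\ref{lem-Csp-min-cap-intro} applied at both radii, or otherwise by gluing potentials.
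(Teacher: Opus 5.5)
Your argument is correct in substance and has the same skeleton as the paper's proof. Lemma~\ref{lem-Csp-min-cap-intro} disposes of $sp<n$. For $sp=n$ one splits according to whether $a(r)=\csp(E_r,B(x_0,2r))$ exceeds $\Csp(B(x_0,r))\simeq(\log(1/r))^{1-p}$ at infinitely many scales, and then uses nestedness to spread the resulting lower bound over a logarithmically long range of radii. Your blocks $j\in[k/2,k]$ correspond to the paper's intervals $[r_{j+1},r_j]$ with $r_{j+1}\le r_j^2$. The ball estimate is not stated for $\Csp$ in Section~\ref{sec-capacity}; the paper cites Adams--Hedberg, but it also follows from Lemmas~\ref{lem-cp-Cp} and~\ref{lem-cap} with $\Om=B(x_0,1)$ and $r\le\frac12$.

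The difference is in the propagation step, where you detour through condenser capacities at the larger scales. The paper propagates through the Sobolev capacity, which involves no outer ball. If $a(\rho)>\Csp(B(x_0,\rho))$, then $\Csp(E_\rho)\simge(\log(1/\rho))^{1-p}$ by Lemma~\ref{lem-Csp-min-cap-intro}. Moreover $\Csp(E_r)\ge\Csp(E_\rho)$ for all $r\ge\rho$, simply because $E_\rho\subset E_r$. Hence the integral in \eqref{eq-int-Csp=infty} over $[\rho,\sqrt{\rho}]$ is $\simge 1$. This needs no dyadic discretization, no quasi-monotonicity of $a$, and no series law.

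Your ``delicate'' series-type inequality, which you leave unproven, is exactly this observation in disguise. For $j\le k$ (so $r_j\le1$), Lemma~\ref{lem-cp-Cp} with $\Om=B(x_0,2r_j)$, together with Lemma~\ref{lem-Csp-min-cap-intro}, gives $a_j\ge\csp(E_{r_k},B(x_0,2r_j))\simge\Csp(E_{r_k})\simeq\min\{a_k,b_k\}$. That is all you need.

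There are three slips to fix. First, the lemmas give the series inequality only up to a multiplicative constant. This suffices for your purpose, but it is not the version with coefficient $1$ on the first term that you wrote. Second, your intermediate bound $a_j\simge\min\{1,(k-j)^{1-p}\}\,a_k$ has the wrong shape. As a product it would only give $a_j\simge (k-j)^{1-p}k^{1-p}$ from $a_k\simge k^{1-p}$, not $a_j\simge k^{1-p}$. The correct form is a minimum, $a_j\simge\min\{a_k,b_k\}$, or after rescaling $\min\{a_k,(k-j)^{1-p}\}$. Third, before summing the contributions $\simeq\log 2$, choose the indices $k\in G$ sparse, say $k_{i+1}\ge 2k_i$, so that the blocks $[k_i/2,k_i]$ are disjoint.
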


If $sp > n$ and
$E_r=\{x_0\}$ for all sufficiently small $r>0$, then the integral in~\eqref{eq-int-csp=infty} 
diverges by Lemma~\ref{lem-sp<=n}, while the one in \eqref{eq-int-Csp=infty} converges.
Thus,  \eqref{eq-int-csp=infty} and \eqref{eq-int-Csp=infty}
are \emph{not} equivalent  when $sp>n$.

The Wiener integral defines thinness and the fine topology,
see Section~\ref{sect-fine-top}.
It is well known that fractional Sobolev functions have representatives
which are finely continuous q.e.\ (i.e.\ except for a set with zero capacity),
see the discussion after Proposition~\ref{prop-Wsp-better-repr}.
Fractional $\LL$-superharmonic  
functions are essentially 
regularized supersolutions
(see Theorem~\ref{thm:KKP17}) and therefore should automatically 
be ``good'' representatives. 
The following result shows that they are even better. 

\begin{thm} \label{thm-superh-finecont}
If $u$ is $\LL$-superharmonic in an open set $\Om$, 
then $u$ is finely continuous in $\Om$.
In particular, if $sp >n$ then $u$ is continuous in $\Om$.
\end{thm}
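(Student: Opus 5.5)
We follow the classical potential-theoretic route for fine continuity of superharmonic functions, adapted to the nonlocal setting. Since $u$ is lower semicontinuous, it suffices to show fine upper semicontinuity. That is, for $x_0\in\Om$ with $u(x_0)<\infty$ and any $\lambda>u(x_0)$, the set
\[
E=\{x\in\Om: u(x)\ge\lambda\}
\]
is thin at $x_0$. Points with $u(x_0)=\infty$ are trivial, because lower semicontinuity gives $u(x)\to\infty$ there.

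First we localize and truncate. Fix a small ball $B_0=B(x_0,R)\Subset\Om$. We replace $u$ by $\min\{u,\lambda\}$, which is again $\LL$-superharmonic, and we may add constants after a lower bound on $\clB_0$. Set $E_r=\clB(x_0,r)\cap E$. Lower semicontinuity makes $E$ relatively closed near $x_0$, up to taking the lsc regularization. By Theorem~\ref{thm-equiv-Wiener-int} (in the case $sp\le n$), it suffices to show that the Wiener integral with the condenser capacity $\csp(E_r,B(x_0,2r))$ converges. The second step is to compare $u$ with the $\LL$-potentials (capacitary/balayage functions) of $E_r$. Using the Choquet property from Section~\ref{sec-capacity}, we approximate $E_r$ by compact subsets $K$. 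Let $w_K$ be the $\LL$-potential of $K$ in $B(x_0,2r)$. By the comparison principle for supersolutions, $(u-m)/(\lambda-m)\ge w_K$ in $B(x_0,2r)$, where $m$ is a lower bound of $u$ on a large ball, with the nonlocal tail of $u$ controlled. Hence
\[
u(x_0)-m\ge(\lambda-m)\,\liminf_{x\to x_0} w_K(x).
\]

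The key estimate is a pointwise lower bound for the potential at the pole. This is the Wolff-potential/Kilpeläinen--Malý type estimate from \cite{KLL23}:
\[
w(x_0)\gtrsim \sum_j \bigl(\csp(E_{2^{-j}},B_{2^{1-j}})/2^{-j(n-sp)}\bigr)^{1/(p-1)}
\]
with a tail term. We apply it to the superharmonic function $v=(u-m)/(\lambda-m)$ restricted near $x_0$. The Wolff sum comes from the Riesz measure of $\min\{v,1\}$, which charges the sets $E_r$ proportionally to their capacities. We would argue by contradiction: if $E$ were not thin, the Wiener sum would diverge. Then \cite{KLL23}'s lower bound, in the iterated form used in the sufficiency part of the Wiener criterion, forces $\liminf_{x\to x_0}v(x)\ge1$ in the sense of fine limits. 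Applied to the quasi-superharmonic regularization, this gives $v(x_0)\ge 1$, i.e.\ $u(x_0)\ge\lambda$, which is a contradiction. The point is that $u(x_0)=\essliminf_{x\to x_0}u(x)$ by Theorem~\ref{thm:KKP17}. The divergent-Wiener case yields a chain of balls on which $v$ is pushed up to $1$ up to errors decaying geometrically, exactly as in the regularity proof.

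Finally, if $sp>n$, Lemma~\ref{lem-sp<=n} shows that every point has positive capacity and no set containing $x_0$ is thin at $x_0$. So the fine topology is the Euclidean one and fine continuity is plain continuity. The main obstacle we expect is the third step. We need a clean pointwise estimate $v(x_0)\ge1$, or fine upper semicontinuity, from the divergent Wiener integral of the superlevel set. This requires carefully handling the nonlocal tail of $v$ outside small balls, which is where the nonlocal theory deviates from the local case. We would first try to reuse directly the iteration lemma from the sufficiency proof in \cite{KLL23}, with the Dirichlet datum replaced by the superlevel set of $u$.
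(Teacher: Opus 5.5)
Your outer reduction matches the paper's: truncate at $\lambda$, show that $E=\{u\ge\lambda\}$ is thin at each $x_0$ with $u(x_0)<\lambda$, approximate by compact sets via Theorem~\ref{thm-cap-inner-reg}, compare with a Dirichlet solution, and conclude with $u(x_0)=\essliminf_{y\to x_0}u(y)$. Note that $E$ is a $G_\delta$ set, not a relatively closed one. The step you flag as the main obstacle is a genuine gap, however. Since $x_0\notin E$, every compact $K\subset E$ has positive distance from $x_0$. So $x_0$ is an interior point of $B\setm K$, and the Wiener criterion says nothing about $w_K(x_0)$. You would need a quantitative Maz{\cprime}ya-type decay estimate for $1-w_K(x_0)$, and you do not supply one. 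What you write instead is not correct. A bound $w(x_0)\gtrsim\sum_j(\cdots)^{1/(p-1)}$ is impossible for a potential with $w\le1$ exactly when the sum diverges. Also, the Riesz measure of $\min\{v,1\}$ need not charge $E_r$ in proportion to its capacity: if $v\ge1$ everywhere, then $\min\{v,1\}\equiv1$ has zero measure.

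The idea you are missing is to put $x_0$ \emph{into} the compact set. The paper takes $K=\{x_0\}\cup\bigcup_j K_j$, with compact $K_j\subset E\cap B(x_0,2^{-j}r)$ carrying half of the capacity. Then $K$ is not thin at $x_0$, and $x_0\in\bdy G$ is regular for $G=B\setm K$ by Theorem~\ref{thm-Wiener}. The defect $u(x_0)<l=g(x_0)$ is handled by comparing with the upper Perron solution of $g+h$, where $h=-l\chi_{\{x_0\}}$. Perron solutions ignore exterior perturbations on sets of zero capacity \cite[Theorem~9.4]{BBK1}, so $\ub\ge H_Gg$ in $G$. Hence $u_l(x_0)\ge\lim_{G\ni y\to x_0}H_Gg(y)=l$, a contradiction. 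This step uses $\Csp(\{x_0\})=0$, i.e.\ $sp\le n$.

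Two further points are unresolved. First, a lower bound $m$ on a large ball does not give $v\ge w_K$. Nonlocal comparison needs the ordering on all of $\Rn\setm G$, and $u$ may be very negative far away, since only $u_-\in L^{p-1}_{sp}(\Rn)$ is known. The paper handles this with Lemma~\ref{lem-comp-u-ub} and Corollary~\ref{cor-ub-supersol}. Replacing $u_l$ outside $2B$ by a sufficiently negative constant $-M$ keeps it a supersolution in $B$, and shifting by $M$ gives $\ub\ge0$ in $\Rn$.

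Second, your paragraph on $sp>n$ does not prove anything by itself. That the fine topology is Euclidean only means fine continuity \emph{is} continuity, which still has to be shown. The argument above also fails there, because $\Csp(\{x_0\})>0$. The paper argues separately:
\begin{enumerate}
\item Proposition~\ref{prop-polar-iff}\ref{polar-a} gives $u(x_0)<\infty$.
\item $v=\min\{u,u(x_0)+1\}$ is a supersolution in $\Wsploc(\Om)$, so it has a continuous representative when $sp>n$.
\item Lsc-regularization (Theorem~\ref{thm:KKP17}) forces $v$ to equal that representative everywhere.
\end{enumerate}
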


There are many $\LL$-superharmonic functions (e.g.\ Green functions) which 
take the value $\infty$ at some points, but not in a too large set.
This lies behind the definition of
so-called polar sets: a set $E$ is \emph{$\LL$-polar} 
if there
is an $\LL$-superharmonic function $u$ in some open set $\Om \supset E$ 
such that $E \subset \{x \in \Om : u(x)=\infty\}$.
The following result says that $\LL$-polar sets coincide with sets of zero 
capacity.
Proposition~\ref{prop-polar-iff}\ref{polar-a} will be used when proving
Theorem~\ref{thm-superh-finecont} for $sp>n$.

\begin{prop} \label{prop-polar-iff}
\begin{enumerate}
\item \label{polar-a}
If $u$ is $\LL$-superharmonic  
in an open set $\Om$, then
\[
\Csp(\{x \in \Om:u(x)=\infty\})=0.
\]
\item \label{polar-conv}
Conversely, if $\Om$ is a bounded open set,  $E \subset \Om$ and
$\Csp(E)=0$, then there is an
$\LL$-superharmonic function $v$ in $\Om$
such that $v=\infty$ in $E$ and $v \ge 0$ in $\Rn$.
\end{enumerate}
\end{prop}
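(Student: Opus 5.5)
For \ref{polar-a} I would use truncations and a Caccioppoli-type estimate. For \ref{polar-conv} I would build an obstacle from capacitary test functions and solve an obstacle problem, rather than summing potentials.

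\emph{Part \ref{polar-a}.} Since $\Csp$ is countably subadditive, it is enough to prove $\Csp(\{u=\infty\}\cap B)=0$ for every ball $B$ with $4B\Subset\Om$.
\begin{enumerate}
\item Since $u$ is lower semicontinuous, it is bounded below on $\itoverline{2B}$. Adding a constant does not change $\LL$, so I may assume $u\ge 0$ in $2B$.
\item By Theorem~\ref{thm:KKP17}, the truncations $u_k=\min\{u,k\}$ are weak supersolutions in $\Wsploc(\Om)$.
\item Test the supersolution inequality for $u_k$ with $\eta^p(k-u_k)$, where $\eta$ is a cutoff with $\eta=1$ on $B$ and $\supp\eta\subset 2B$. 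The standard truncation (Caccioppoli) argument should give
\[
\int_{2B}\int_{2B}|\eta(x)u_k(x)-\eta(y)u_k(y)|^p\,\frac{dx\,dy}{|x-y|^{n+sp}} \le C k,
\]
where $C$ depends on $u$ and $B$ but not on $k$.
\item The main obstacle is the negative part of $u$ outside $2B$. It enters through the nonlocal tail. I would control it using the tail finiteness that is part of the definition of $\LL$-superharmonicity, $u_-\in L^{p-1}_{sp}(\Rn)$.
\item The function $\eta u_k/k$ lies in $\WspRn$ and equals $1$ on $\{u=\infty\}\cap B$, which is open-ish only up to quasicontinuity. Since $u_k$ is lower semicontinuous, $\{u_k>k-\eps\}$ is open and contains this set, so the capacity definition applies. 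Its $L^p$ norm is controlled similarly.
\item Hence $\Csp(\{u=\infty\}\cap B)\le Ck^{1-p}\to0$ as $k\to\infty$.
\end{enumerate}

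\emph{Part \ref{polar-conv}.}
\begin{enumerate}
\item By outer regularity of $\Csp$ (it is a Choquet capacity, Section~\ref{sec-capacity}), choose open sets $G_j$ with $E\subset G_j\subset\Om$ and $\Csp(G_j)<2^{-jp}$.
\item Choose $\psi_j\ge0$ in $\WspRn$ with $\psi_j\ge1$ on $G_j$ and $\|\psi_j\|_{\Wsp}<2^{-j}$.
\item After multiplying by a cutoff supported in a bounded neighbourhood $\Om'\supset\Om$ (or working with data $0$ outside $\Om$), the sum $\psi=\sum_j\psi_j$ converges in $\Wsp$.
\item Since the $\psi_j$ are nonnegative and each is $\ge1$ on the open set $G_j$, we get $\psi\ge m$ on the open set $U_m=\bigcap_{j\le m}G_j\supset E$.
\item I would not simply sum capacitary potentials, because sums of supersolutions are not supersolutions when $p\ne2$. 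Instead, solve the nonlocal obstacle problem in $\Om$ with obstacle $\psi$ and exterior data $0$. A minimizer $w$ exists in $\VspOm$ by the direct method, since the admissible class contains $\psi$ and is convex and closed.
\item The minimizer $w$ is a weak supersolution with $w\ge\psi\ge0$ a.e. Its lower semicontinuous regularization $v(x)=\essliminf_{y\to x}w(y)$ is $\LL$-superharmonic in $\Om$ by Theorem~\ref{thm:KKP17}. Extended by $0$ outside $\Om$, it satisfies $v\ge0$ in $\Rn$.
\item Since $w\ge m$ a.e.\ on the open set $U_m$, the essential liminf gives $v\ge m$ everywhere on $U_m$, and in particular on $E$.
\item Letting $m\to\infty$ gives $v=\infty$ on $E$.
\end{enumerate}

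The main point to check in part \ref{polar-conv} is that the obstacle solution is a supersolution with the right tail and regularity, so that Theorem~\ref{thm:KKP17} applies. This should be standard given the obstacle theory for $\LL$ (as in Korvenpää--Kuusi--Palatucci).
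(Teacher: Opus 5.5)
The gap is in step~3 of part~\ref{polar-a}. Testing the supersolution $u_k$ with $\eta^p(k-u_k)$ is exactly the standard Caccioppoli inequality for $w:=k-u_k=(k-u)_+\ge0$ in $2B$. Its right-hand side contains
\[
\iint_{2B\times 2B}\max\{w(x),w(y)\}^p\,\frac{|\eta(x)-\eta(y)|^p}{|x-y|^{n+sp}}\,dx\,dy .
\]
Since $w=k-u$ is of size $k$ wherever $u$ is finite (i.e.\ a.e.), this term is of order $k^p$. So you get $[\eta w]^p_{\Wsp(2B)}\simle k^p$, and via $\eta u_k=k\eta-\eta w$ only $[\eta u_k]^p_{\Wsp(\Rn)}\simle k^p$, not $\simle k$. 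With this, step~6 gives merely $\Csp(\{u=\infty\}\cap B)\le C$, which proves nothing. Your displayed linear bound may well be true, but it needs genuinely more information about $u$ than this test function provides. Examples would be local integrability of $|u(x)-u(y)|^{p-1}$ against the kernel, or a weighted Caccioppoli estimate combined with $L^q\loc$-integrability of superharmonic functions. The tail, which you single out as the main obstacle, is harmless here.

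The paper avoids the issue by working with $\log u$ instead of truncations. After normalizing $u\ge1$ in $4B$, it applies the logarithmic estimate of Lemma~\ref{lem-DCKP-log} to $\min\{u,j\}$ and passes to the limit by monotone convergence, giving $\log u\in\Wsp(2B)$ (Lemma~\ref{lem-log-superh}). A cutoff $\vt\in\Wsp(\Rn)$ of it is lower semicontinuous in $B$. Hence $\vt/l$ is admissible for the open set $\{x\in B:\vt(x)>l\}\supset\{u=\infty\}\cap B$, and $\Csp(\{u=\infty\}\cap B)\le l^{-p}\|\vt\|^p_{\Wsp(\Rn)}\to0$.

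Alternatively, you can keep the $k^p$ bound and trade the rate for compactness. The functions $f_k:=\eta u_k/k$ are bounded in $\Wsp(\Rn)$ and $f_k\to0$ a.e.\ (as $u<\infty$ a.e.), hence weakly in $\Wsp(\Rn)$. By Lemma~\ref{lem-Mazur}, some convex combination $g=\sum_{i=1}^N a_if_{k_i}$ has arbitrarily small norm. Moreover $g=1$ on the open set $B\cap\{u>k_N\}\supset\{u=\infty\}\cap B$, so Proposition~\ref{prop-Wsp-Csp-1} gives $\Csp(\{u=\infty\}\cap B)=0$.

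Part~\ref{polar-conv} is essentially the paper's proof, with one correction. With exterior data $0$, the obstacle $\psi$ is not admissible: it need not vanish outside $\Om$, and a cutoff supported in $\Om'\supset\Om$ does not help. Take the exterior data equal to $\psi$, i.e.\ the $\K_{\psi,\psi}(\Om)$ obstacle problem, as the paper does.
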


Some historical remarks on negligible sets and fine topologies in the framework of local equations are in order. 
In classical linear potential theory, Evans~\cite{Eva36} proved the existence of a potential 
that becomes infinite precisely on a prescribed compact set of capacity zero. 
Soon after, Brelot~\cite{Bre41} introduced the concept of polar sets to systematically treat small sets. 
The equivalence between polar sets and 
sets of zero capacity was established by Cartan~\cite{Car45}. 
Choquet~\cite{Cho57} later generalized Evans's result, proving that any $G_\delta$ 
set of zero capacity  
coincides with the set of infinities of a potential.

The notion of fine topology was introduced by Cartan~\cite{Car45} 
as the coarsest topology making all superharmonic functions 
(associated with the Laplace equation)
continuous. 
The deep connection between this fine topology and the quasicontinuity 
of classical superharmonic functions was subsequently established 
in the seminal work of Fuglede~\cite{Fug71}. 
The fine topology was extended to nonlinear theories by Meyers~\cite{Mey75}, 
Adams--Meyers~\cite{AM72}, Adams--Hedberg~\cite{AH84} and Hedberg--Wolff~\cite{HW83}. 

For equations of \p-Laplace type,
Proposition~\ref{prop-polar-iff}\ref{polar-a}  was proved by 
Lindqvist--Martio~\cite{LM88} (for $p=n$) 
and Heinonen--Kilpel\"ainen~\cite[Theorem~1.5]{HK88} 
(for all $1 < p \le n$), while
Proposition~\ref{prop-polar-iff}\ref{polar-conv} was obtained
by Kilpel\"ainen~\cite[Theorem~1.7]{Kilp89}.
In~\cite[Theorem~1.3]{Kilp99}, Kil\-pe\-l\"ai\-nen extended Choquet's result~\cite{Cho57}
to equations of \p-Laplace type,
showing (in particular)
that for any $G_\de$-set $E \subset \Rn$ with zero \p-capacity,
there is a \p-superharmonic function $u$ such that 
$E=\{x \in \Rn:u(x)=\infty\}$,
cf.\ Open Problem~\ref{open-prob}.

The paper is organized as follows.
In Section~\ref{sec-sobolev}
we recall several relevant function spaces
and the notion of (weak) solutions of $\LL u=0$.
In Section~\ref{sec-capacity}, 
we recall the definitions of the condenser and Sobolev capacities,
prove that they are Choquet capacities
and establish several other properties.
We prove Lemma~\ref{lem-Csp-min-cap-intro} and
Theorem~\ref{thm-equiv-Wiener-int} in Section~\ref{sect-Wiener-criteria}.
Additional equivalent formulations of Wiener type criteria
are provided in Section~\ref{sect-Wiener-pizza}.

In Section~\ref{sect-fine-top}, we turn to the study of
fine topology, thin sets and quasicontinuity.
In Section~\ref{sect-polar}, the proofs of Theorem~\ref{thm-superh-finecont}
and Proposition~\ref{prop-polar-iff} are provided.
Finally, in Section~\ref{sect-different-sp},
we prove Theorem~\ref{thm-reg-inclusion-intro},
which shows how regularity varies with $s$ and $p$.
We end the paper with a discussion on
the conditions in Theorem~\ref{thm-reg-inclusion-intro}.

\section{Fractional Sobolev spaces
and solutions of \texorpdfstring{$\LL u=\nobreak0$}{Lu=0}}
\label{sec-sobolev}

\begin{ass}
Throughout the paper, we assume
that 
$0<s<1<p<\infty$,
that $k$  is a symmetric measurable kernel satisfying \eqref{eq-comp-(x,y)},
and that $\Om \subset \Rn$, $n \ge 1$,
is a nonempty  open set.
\end{ass}

To consider weak solutions of the equation
\begin{equation}\label{eq-Lu=0}
\LL u=0 \quad\text{in}~\Om,
\end{equation}
we need to define some function spaces.
For a measurable function $u: \Om \to \eR:=[-\infty,\infty]$ (which is
finite a.e.) we 
consider the fractional seminorm  
\begin{equation} \label{eq-seminorm}
  [u]_{\Wsp(\Om)}=
\biggl(  \int_{\Omega}\int_{\Omega} \frac{|u(x)-u(y)|^p}{|x-y|^{n+s p}} 
      \, dy\, dx\biggr)^{1/p}. 
\end{equation}
The fractional Sobolev space $\Wsp(\Omega)$ consists of the functions $u$ such that
the norm  
\begin{equation*}
\|u\|_{\Wsp(\Om)}^p:=\|u\|_{L^p(\Om)}^p+[u]_{\Wsp(\Om)}^p < \infty.
\end{equation*}
The above spaces and (semi)norms go under various names, 
such as fractional Sobolev, 
Gagliardo--Nirenberg, Sobolev--Slobodetski\u{\i} and Besov.

By $W^{s, p}_{\mathrm{loc}}(\Omega)$ we denote the space of functions that belong to $W^{s, p}(G)$ 
for every open $G \Subset \Omega$. 
As usual, by $E \Subset \Om$ we mean that $\itoverline{E}$
is a compact subset of $\Om$.
We refer the reader to Di Nezza--Palatucci--Valdinoci~\cite{DNPV12} for properties of these spaces.

As \eqref{eq-Lu=0} is a nonlocal equation, we also
need a function space that captures integrability of
functions in the whole of $\Rn$.
The \emph{tail space} $L^{p-1}_{sp}(\R^n)$ is given by
\begin{equation*}
L^{p-1}_{sp}(\R^n) = \biggl\lbrace u \text{ measurable}: 
  \int_{\R^n} \frac{|u(y)|^{p-1}}{(1+|y|)^{n+sp}} \,dy < \infty \biggr\rbrace.
\end{equation*}

It is often
convenient to deal with a larger class of test functions than $C_c^\infty(\Om)$.
For this purpose, we will use the spaces 
\begin{equation*}  
\Vsp(\Omega) := \biggl\lbrace u: \R^n \to \eR : 
u|_{\Omega} \in L^p(\Omega) \text{ and }  
\frac{|u(x)-u(y)|}{|x-y|^{n/p+s}} \in L^p(\Omega \times \R^n) \biggr\rbrace,
\end{equation*}
equipped with the norm
\begin{align*}
\|u\|_{V^{s, p}(\Omega)} 
&:= \bigl( \|u\|_{L^p(\Omega)}^p + [u]_{V^{s, p}(\Omega)}^p \bigr)^{1/p} \\
&:= \biggl( \int_{\Omega} |u(x)|^p \,dx + \int_{\Omega} \int_{\R^n} \frac{|u(x)-u(y)|^p}{|x-y|^{n+sp}} \,dy \,dx \biggr)^{1/p},
\end{align*}
and
\begin{equation*}
V^{s, p}_0(\Omega) := \overline{C_c^{\infty}(\Omega)}^{V^{s, p}(\Omega)},
\end{equation*}
with the convention that $u\equiv0$ outside $\Om$ for $u\in \Vsp_0(\Om)$.
The $\Wsp(\Rn)$-norm is an equivalent norm on $\Vsp_0(\Om)$,
and so 
are the seminorms $[\,\cdot\,]_{\Wsp(\Rn)}$ and $[\,\cdot\,]_{\Vsp(\Om)}$,
by the fractional Poincar\'e inequality (see e.g.\ \cite[(5.6)]{BBK1})
whenever $\Om$ is bounded.

Note that $\Vsp(\R^n)=\Wsp(\R^n)$
and that
\begin{equation} \label{eq-Lipc-inclusions}
\Lipc(\Om) \subset \Vspo(\Om) \subset
\Vsp(\Om) \subset
\Wsp(\Om) \cap L^{p-1}_{sp}(\R^n).
\end{equation}

See \cite[Remark~2.2]{KLL23}, \cite[Section~2]{BBK1}
and the references therein for further remarks 
on the spaces $\Vsp_0(\Om)$ and $\Vsp(\Om)$,
which were denoted by $\Wsp_0(\Om)$ and $\Vsp(\Om|\Rn)$
in~\cite{KLL23}.
 The space $V^{s,2}(\Om)$ was (with $p=2$) introduced by
 Servadei--Valdinoci~\cite{SV14} and independently by
 Felsinger--Kassmann--Voigt~\cite{FKV15}.

As usual,
$C_c^\infty(\Om)$ denotes
the space of $C^\infty$ functions with compact support in $\Om$.
Similarly, $\Lipc(\Om)$ denotes the space of Lipschitz 
functions with compact support in $\Om$.
The \emph{support} of a function $u$ is 
$\supp u =\overline{\{x:u(x)\ne0\}}$
and
$u_+=\max\{u,0\}$ and $u_-=\max\{-u,0\}$.

In this paper, there are many places where the functions need to be 
defined pointwise everywhere in a given set, and not just a.e. 
For convenience, we will therefore assume that all functions are defined
pointwise everywhere.

It is well known and easy to see that
the integrand in the seminorm~\eqref{eq-seminorm} decreases under truncations.
More generally, it follows from the following lemma that the spaces
$\Wsp(\Om)$ and  $\Vsp(\Om)$
are lattices,
i.e.\ if $u,v \in \Wsp(\Om)$ then $\max\{u,v\},\min\{u,v\} \in \Wsp(\Om)$,
and similarly for $\Vsp(\Om)$. 
This lemma will also be used to show the strong subadditivity
of the capacities (see Proposition~\ref{prop-strong-subadd}).
The lemma is elementary, 
see 
Kim--Lee--Lee~\cite[Lemma~A.3]{KLL25} for a proof of a more
general lemma.

\begin{lem} \label{lem-A3}
Let $a,a',b,b' \in \R$ be arbitrary.
Then
\[
   |{\max\{a,a'\}-\max\{b,b'\}}|^p +|{\min\{a,a'\}-\min\{b,b'\}}|^p
  \le    |a-b|^p+|a'-b'|^p.
\]
\end{lem}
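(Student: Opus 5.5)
The plan is a direct case analysis on the relative order of $a,a'$ and of $b,b'$, followed by a convexity argument for the one nontrivial configuration.

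\emph{Trivial cases.} If $a\ge a'$ and $b\ge b'$, then $\max\{a,a'\}=a$, $\max\{b,b'\}=b$, $\min\{a,a'\}=a'$ and $\min\{b,b'\}=b'$. So the left-hand side equals $|a-b|^p+|a'-b'|^p$, and we have equality. The case $a\le a'$, $b\le b'$ is identical, with the roles of the primed and unprimed letters swapped.

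\emph{Mixed case.} By the symmetry $(a,b)\leftrightarrow(a',b')$, which swaps the two terms on the right and leaves the left unchanged, it suffices to treat $a\ge a'$ and $b\le b'$. The left-hand side is then $|a-b'|^p+|a'-b|^p$. Put $u=a-b$ and $v=a'-b'$, and set $x=a-b'$ and $y=a'-b$. Two facts hold:
\begin{enumerate}
\item $x+y=u+v$;
\item $v\le x,y\le u$. Indeed, $x=a-b'\le a-b=u$ since $b\le b'$, and $x\ge a'-b'=v$ since $a\ge a'$; the bounds for $y$ follow in the same way.
\end{enumerate}

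\emph{Convexity step.} If $u=v$, then $x=y=u$ and the inequality is an equality. Otherwise write $x=\lambda u+(1-\lambda)v$ with $\lambda\in[0,1]$. Then (a) forces $y=(1-\lambda)u+\lambda v$. Since $f(t)=|t|^p$ is convex for $p\ge1$, we get
\[
f(x)+f(y)\le \lambda f(u)+(1-\lambda)f(v)+(1-\lambda)f(u)+\lambda f(v)=f(u)+f(v).
\]
This is exactly the claimed inequality, and it is a simple instance of Karamata's majorization inequality.

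There is no real obstacle here. The only point requiring care is to identify correctly which pairs become the max and min in the mixed case, and to check the interval containment (b) that makes the convexity argument apply.
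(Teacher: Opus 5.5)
Your proof is correct and complete. The reduction to the single mixed case $a\ge a'$, $b\le b'$ is right, the facts $x+y=u+v$ and $v\le x,y\le u$ check out, and convexity of $t\mapsto|t|^p$ then gives $f(x)+f(y)\le f(u)+f(v)$.

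The paper itself gives no proof. It calls the lemma elementary and refers to \cite[Lemma~A.3]{KLL25} for a more general version, so your argument supplies a self-contained proof. It also uses nothing about $|t|^p$ beyond convexity. Hence it proves the same inequality with $|\cdot|^p$ replaced by any convex function on $\R$, which is the kind of generality needed in the Orlicz setting of that reference.
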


To see that also $\VspoOm$ is a lattice, which is closed under certain truncations,
the following lemma is convenient.
For a proof see \cite[Lemma~2.5]{BBK1} 
or Costea~\cite[Lemma~2.2]{costea07}, 
which is similar but deals with a 
different nonlocal norm.

\begin{lem} \label{lem-truncation}
Assume that $g\in V^{s, p}(\Om)$ and $u_j \to u$ in ${V^{s, p}(\Om)}$ as $j\to\infty$.
Let 
\[
\ub=\min\{u,g\} \quad \text{and} \quad \ub_j=\min\{u_j,g\}.
\]
Then $\ub_j \to \ub$ in $V^{s, p}(\Om)$ as $j\to\infty$.
\end{lem}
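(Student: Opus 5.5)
The proof will prove convergence in $V^{s,p}(\Om)$ for the two pieces of the norm separately: the $L^p(\Om)$-part and the nonlocal seminorm part over $\Om\times\Rn$. Everything rests on the pointwise inequality $|\min\{a,c\}-\min\{b,c\}|\le|a-b|$.

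The $L^p(\Om)$-part is immediate. Applying this inequality with $c=g(x)$ gives $|\ub_j-\ub|\le|u_j-u|$ pointwise, so $\|\ub_j-\ub\|_{L^p(\Om)}\le\|u_j-u\|_{L^p(\Om)}\to0$.

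For the seminorm, write $U_j(x,y)=(u_j(x)-u_j(y))/|x-y|^{n/p+s}$, and define $U$, $\bar U_j$ and $\bar U$ analogously from $u$, $\ub_j$ and $\ub$. We need $\bar U_j\to\bar U$ in $L^p(\Om\times\Rn)$, and we only know $U_j\to U$ there. Passing to a subsequence, we may assume $u_j\to u$ a.e.\ in $\Om$ and $U_j\to U$ a.e.\ in $\Om\times\Rn$. We also take a dominating function $H\in L^p(\Om\times\Rn)$ with $|U_j|\le H$ (partial Riesz--Fischer).

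The pointwise convergence outside $\Om$ needs care. For $y\notin\Om$ we do not directly know $u_j(y)\to u(y)$. However, for a.e.\ $x\in\Om$ we have $u_j(x)\to u(x)$ and $U_j(x,y)\to U(x,y)$ for a.e.\ $y$. Hence $u_j(y)=u_j(x)-U_j(x,y)|x-y|^{n/p+s}\to u(y)$ for a.e.\ $y\in\Rn$. Consequently $\ub_j\to\ub$ a.e.\ in $\Rn$, and $\bar U_j\to\bar U$ a.e.\ in $\Om\times\Rn$.

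We next need a domination for $\bar U_j$. The elementary estimate
\[
|\min\{a,a'\}-\min\{b,b'\}|\le\max\{|a-b|,|a'-b'|\}
\]
(a special case of the reasoning behind Lemma~\ref{lem-A3}) gives
\[
|\bar U_j(x,y)|\le \max\{|U_j(x,y)|,|G(x,y)|\}\le H+|G|,
\]
where $G$ is the difference quotient of $g$. Since $g\in\Vsp(\Om)$, we have $G\in L^p(\Om\times\Rn)$. Dominated convergence then yields $\bar U_j\to\bar U$ in $L^p$ along the subsequence.

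Finally, the subsequence argument upgrades to the full sequence. Every subsequence of $(u_j)$ has a further subsequence along which $\ub_j\to\ub$ in $\Vsp(\Om)$, and the limit is always $\ub$. Hence the whole sequence converges. The main obstacle is the pointwise convergence at points $y\notin\Om$, handled above through the a.e.\ convergence of the difference quotients.
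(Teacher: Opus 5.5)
Your proof is correct and complete. The paper gives no proof of its own here, only the references \cite[Lemma~2.5]{BBK1} and \cite[Lemma~2.2]{costea07}, and your subsequence/dominated-convergence argument is the standard route for such truncation lemmas.

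You correctly identified the one delicate point: $\Vsp(\Om)$-convergence says nothing directly about $u_j$ on $\Omc$. Your recovery of a.e.\ convergence there, from a single good point $x\in\Om$ via $u_j(y)=u_j(x)-U_j(x,y)|x-y|^{n/p+s}$, is valid. Alternatively, you could note that $\Vsp(\Om)$-convergence implies $L^p\loc(\Rn)$-convergence.

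Your bound $|\bar U_j|\le\max\{|U_j|,|G|\}$ is true, though it is not literally Lemma~\ref{lem-A3}. That lemma directly gives $|\bar U_j|^p\le|U_j|^p+|G|^p$, which is equally sufficient for the domination.
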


\begin{lem} \label{lem-Vspo-lattice}
\begin{enumerate}
\item \label{Vspo-lattice}
$\VspoOm$ is a lattice.
\item \label{Vspo-lattice-min}
If $u \in \Vspo(\Om)$ and $l \ge 0$, then $\min\{u,l\} \in \Vspo(\Om)$.
\end{enumerate}
\end{lem}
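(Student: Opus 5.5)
The plan is to reduce both parts to Lemma~\ref{lem-truncation}, combined with the approximation of elements of $\VspoOm$ by functions in $C_c^\infty(\Om)$. The basic observation concerns $\phi,\psi \in C_c^\infty(\Om)$ and $l\ge0$. The functions $\max\{\phi,\psi\}$, $\min\{\phi,\psi\}$ and $\min\{\phi,l\}$ are Lipschitz with compact support in $\Om$; the last one vanishes wherever $\phi$ does, because $l\ge 0$. By \eqref{eq-Lipc-inclusions} they therefore belong to $\Vspo(\Om)$. We also use that $\Vspo(\Om)$ is closed in $\VspOm$, since it is defined as a closure.

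For part~\ref{Vspo-lattice-min}, let $u\in\Vspo(\Om)$ and pick $\phi_j\in C_c^\infty(\Om)$ with $\phi_j\to u$ in $\VspOm$. Take $g\equiv l$. This constant belongs to $\VspOm$ as soon as $l|_\Om\in L^p(\Om)$, which is automatic when $\Om$ is bounded, and its seminorm is zero. Lemma~\ref{lem-truncation} then gives $\min\{\phi_j,l\}\to\min\{u,l\}$ in $\VspOm$. Each $\min\{\phi_j,l\}$ lies in $\Lipc(\Om)\subset\Vspo(\Om)$, so the closedness of $\Vspo(\Om)$ yields $\min\{u,l\}\in\Vspo(\Om)$.

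The main obstacle is unbounded $\Om$, where the constant $l$ is not in $L^p(\Om)$. Here I would inspect the proof of Lemma~\ref{lem-truncation}: only the seminorm of $g$ should enter, since $|\min\{a,g\}-\min\{b,g\}|\le|a-b|$ controls the $L^p$ part. Alternatively, replace $g$ by $l\eta$, where $\eta\in C_c^\infty(\Rn)$ satisfies $0\le\eta\le1$ and $\eta=1$ on a large ball. Then $\min\{\phi_j,l\eta\}=\min\{\phi_j,l\}$ once the ball contains $\supp\phi_j$, and $\min\{u,l\eta\}\to\min\{u,l\}$ as the ball grows, by dominated convergence on the double integral; for this use $|\min\{u,l\eta\}|\le|u|$ together with Lemma~\ref{lem-A3}-type pointwise bounds.

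For part~\ref{Vspo-lattice}, let $u,v\in\Vspo(\Om)$ and take approximations $\phi_j\to u$ and $\psi_j\to v$ in $C_c^\infty(\Om)$. Write $\min\{u,v\}-\min\{\phi_j,\psi_j\}$ as $\bigl(\min\{u,v\}-\min\{\phi_j,v\}\bigr)+\bigl(\min\{\phi_j,v\}-\min\{\phi_j,\psi_j\}\bigr)$. The first term tends to $0$ by Lemma~\ref{lem-truncation} with $g=v\in\VspOm$. For the second term, I apply Lemma~\ref{lem-truncation} once more, with the roles of the variables arranged so that it yields $\min\{\phi_j,\psi_j\}-\min\{\phi_j,v\}\to0$, and I expect this to need care. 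The cleanest route is a diagonal argument: for fixed $j$, $\min\{\phi_j,\psi_k\}\to\min\{\phi_j,v\}$ as $k\to\infty$ by Lemma~\ref{lem-truncation} with $g=\phi_j$. Hence $\min\{\phi_j,v\}\in\Vspo(\Om)$ by closedness, and then $\min\{u,v\}$ is the limit of these functions, so it also lies in $\Vspo(\Om)$. Finally, $\max\{u,v\}=-\min\{-u,-v\}$, and $\Vspo(\Om)$ is a linear space, so $\max\{u,v\}\in\Vspo(\Om)$ as well.
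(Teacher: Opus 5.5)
Your part~\ref{Vspo-lattice-min} is the paper's argument. You approximate $u$ by $\phi_j\in C_c^\infty(\Om)$, apply Lemma~\ref{lem-truncation} with $g\equiv l$, and use that $\min\{\phi_j,l\}$ is Lipschitz with compact support in $\Om$. The paper mollifies these functions explicitly, which amounts to your use of \eqref{eq-Lipc-inclusions} plus closedness.

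Part~\ref{Vspo-lattice} is where you differ. The paper writes $\min\{u,v\}=v+\min\{u-v,0\}$ and $\max\{u,v\}=v-\min\{v-u,0\}$, and applies part~\ref{Vspo-lattice-min} with $l=0$ to $u-v,\,v-u\in\VspoOm$. This is a one-line reduction. Since $l=0$, the truncation is against $g\equiv0\in\VspOm$, so it is valid for every $\Om$. Your two-step closure argument is also correct: first $\min\{\phi_j,\psi_k\}\to\min\{\phi_j,v\}$ as $k\to\infty$ with $g=\phi_j$, then $\min\{\phi_j,v\}\to\min\{u,v\}$ with $g=v$. It is equally insensitive to the size of $\Om$, because both truncating functions lie in $\VspOm$. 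It just does not reuse part~\ref{Vspo-lattice-min}, and it needs two limit passages instead of one.

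You are right that for $|\Om|=\infty$ and $l>0$ the constant $l$ is not in $\VspOm$; the paper applies Lemma~\ref{lem-truncation} with $g\equiv l$ without comment. Your first remedy is the correct one. Since $|\min\{a,l\}-\min\{b,l\}|\le|a-b|$, pass to a fast subsequence with $u_j\to u$ a.e.\ in $\Rn$. Along it, both the $L^p(\Om)$-part and the double integral are dominated by quantities built only from $u_j-u$ and $u$, and dominated convergence applies.

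The alternative with $l\eta$, however, should be discarded. The bounds you cite leave the cross term $l|\eta(x)-\eta(y)|$, which $u$ does not control, and the claimed convergence can actually fail. The difference $\min\{u,l\}-\min\{u,l\eta\}=(\min\{u,l\}-l\eta)_+$ cuts $u$ off across the thin layer where $l\eta$ drops below $u$. Take $\Om=\Rn$, $l=1$, $u(x)=(1+|x|)^{-\alpha}$ and $\eta_R(x)=\min\{1,(2-|x|/R)_+\}$. The layer then lies near $|x|=2R$, where $u\approx R^{-\alpha}$, and has width $\approx R^{1-\alpha}$. Comparing values on its two sides gives $[(u-\eta_R)_+]^p_{\Wsp(\Rn)}\simge R^{n-sp-\alpha(p+1-sp)}$. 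This tends to $\infty$ for $\alpha$ slightly larger than $n/p$ whenever $sp(n-p)>n$ (e.g.\ $n=5$, $p=5/2$, $s=9/10$). Nothing in your sketch distinguishes a smooth cutoff from this ramp, so the dominated-convergence step cannot be right as stated.
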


\begin{proof}
\ref{Vspo-lattice-min}
We begin by showing that $\ut:=\min\{u,l\} \in \Vspo(\Om)$.
Let $u_j \in C^\infty_c(\Om)$ be
such that $u_j \to u$ in $\Vsp(\Om)$ as $j \to \infty$.
Thus
$\ut_j:=\min\{u_j,l\} \in \Lip_c(\Om) \subset \Vsp(\Om)$, 
cf.\ \eqref{eq-Lipc-inclusions}.
By Lemma~\ref{lem-truncation}, $\ut_j \to \ut$ in $\Vsp(\Om)$ as $j \to \infty$.
Since $\supp \ut_j \Subset \Om$, 
there are also mollifications $v_j\in C_c^\infty(\Om)$ of $\ut_j$
such that $v_j \to \ut$ in $\Vsp(\Om)$,
i.e.\ $\ut \in \Vspo(\Om)$.

\ref{Vspo-lattice} For $u,v \in \VspoOm$,
writing
\[
\min\{u,v\}=v+ \min\{u-v,0\}  \quad \text{and}  \quad
\max\{u,v\}=v- \min\{v-u,0\},
\]
together with \ref{Vspo-lattice-min},
concludes the proof.
\end{proof}

For measurable functions $u, v: \R^n \to \eR$ 
we define the quantity
\begin{equation*}
\mathcal{E}(u,v)=\int_{\R^n} \int_{\R^n} |u(x)-u(y)|^{p-2} (u(x)-u(y))(v(x)-v(y)) k(x, y) \,dy\,dx,
\end{equation*}
provided that   
the double integral exists and is finite.
Note that $\mathcal{E}(u, v)$ is well defined for 
$u \in \Wsp_{\mathrm{loc}}(\Om) \cap L^{p-1}_{sp}(\R^n)$ and $v \in C_c^\infty(\Om)$,
and that it is linear in the second argument.

The following definition of solutions and supersolutions of \eqref{eq-Lu=0} 
is standard by now.

\begin{deff}   \label{def-supersol}
Let $u \in W^{s, p}_{\mathrm{loc}}(\Om) \cap L^{p-1}_{sp}(\R^n)$.
Then $u$ is a 
(weak) \emph{solution} (resp.\ \emph{supersolution}) of $\LL u=0$ in $\Om$ if 
\begin{equation*}
\mathcal{E}(u, \varphi)  \ge 0 
\quad \text{for all  $\varphi \in C_c^{\infty}(\Omega)$
\quad
(resp.\ for all $0 \le \varphi \in C_c^{\infty}(\Omega)$).}
\end{equation*}
If $u \in C(\Om)$ is a solution in $\Om$, then $u$ is \emph{$\LL$-harmonic} in $\Om$.
\end{deff}

For simplicity we will just say that $u$ is a (super)solution in $\Om$, but we always
mean with respect to $\LL u=0$ in the weak sense of Definition~\ref{def-supersol}.
If $u$ is a solution, then there is an $\LL$-harmonic function
$v$ such that $v=u$ a.e.,
see e.g.\ Di Castro--Kuusi--Palatucci~\cite[Theorem~1.4]{DCKP16}.
The definition of solutions in~\cite[Theorem~1.4]{DCKP16} assumes $u\in\Wsp(\R^n)$, 
but the same proof shows that 
every solution in our sense has a representative which is
locally H\"older continuous in $\Om$.
The  supersolutions defined in  Korvenp\"a\"a--Kuusi--Palatucci~\cite{KKP17}
are only required to satisfy 
$u \in \Wsp_{\mathrm{loc}}(\Om) $ and 
$u_- \in L^{p-1}_{sp}(\R^n)$,
but  \cite[Lemma~1]{KKP17} shows that their
definition is equivalent to Definition~\ref{def-supersol}.

For functions $g \in \VspOm$ (and bounded $\Om$) 
the Dirichlet problem can be solved in the Sobolev sense. 
To make it precise we define Sobolev solutions $Hg$ as follows.

\begin{thm} \label{thm-ex-Hg}
\textup{(Kim--Lee~\cite[Theorem~4.9]{KL} 
and Korvenp\"a\"a--Kuusi--Palatucci~\cite[Theorem~9]{KKP17})}
Assume that $\Om$ is bounded.
Let 
$g \in \VspOm$.
Then there is a unique function $Hg := H_\Om g:\R^n \to \eR$ 
with the following properties\/\textup{:}
\begin{enumerate}
\item
  $Hg$ is $\LL$-harmonic in $\Om$,
\item
  $Hg-g \in \VspoOm$, and in particular
  $Hg \equiv g$ outside $\Om$.
\end{enumerate}
\end{thm}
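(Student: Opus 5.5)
The plan is to prove existence by a variational (direct method) argument and uniqueness by strict monotonicity, and then upgrade the minimizer to a continuous representative.

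\emph{Existence.} Consider the affine set $\K_g=\{v\in\VspOm: v-g\in\VspoOm\}$. Every $v\in\K_g$ equals $g$ outside $\Om$. On $\K_g$ minimize
\[
F(v)=\frac1p\int_{\Rn}\int_{\Rn}|v(x)-v(y)|^p k(x,y)\,dy\,dx .
\]
Two points need checking.
\begin{itemize}
\item $F(v)<\infty$ on $\K_g$. The integral over $\Om^c\times\Om^c$ vanishes only after subtracting a constant. So I would minimize instead
\[
\tilde F(v)=\frac1p\iint_{\Rn\times\Rn\setm\Omc\times\Omc}|v(x)-v(y)|^p k(x,y)\,dy\,dx ,
\]
which is finite for $v\in\VspOm$ by \eqref{eq-comp-(x,y)}, since the region is contained in $(\Om\times\Rn)\cup(\Rn\times\Om)$.
\item Coercivity. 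Write $v=g+w$ with $w\in\VspoOm$. Then $\tilde F(v)^{1/p}\ge c[w]_{\Wsp(\Rn)}-C\|g\|_{\VspOm}$, and for bounded $\Om$ the fractional Poincar\'e inequality makes $[\,\cdot\,]_{\Wsp(\Rn)}$ an equivalent norm on $\VspoOm$.
\end{itemize}
$\VspoOm$ is a closed subspace of a reflexive space, hence reflexive. $\tilde F$ is convex and continuous, hence weakly lower semicontinuous. The direct method therefore gives a minimizer $u$.

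\emph{Euler--Lagrange equation.} For $\phi\in C_c^\infty(\Om)$, differentiate $t\mapsto\tilde F(u+t\phi)$ at $t=0$, justified by dominated convergence. Since $\phi=0$ on $\Omc$, the excluded region contributes nothing, and we get $\EE(u,\phi)=0$. Thus $u$ is a weak solution in the sense of Definition~\ref{def-supersol}. Here $u\in\Wsploc(\Om)\cap L^{p-1}_{sp}(\Rn)$ by \eqref{eq-Lipc-inclusions}.

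\emph{Uniqueness.} Suppose $u_1,u_2$ are two such functions. Then $u_1-u_2\in\VspoOm$, and by density of $C_c^\infty(\Om)$ in $\VspoOm$ we may test with $\phi=u_1-u_2$. Continuity of $\EE(u_i,\cdot)$ on $\VspoOm$ follows from H\"older's inequality. Subtracting the two equations gives
\[
\iint\bigl(|a|^{p-2}a-|b|^{p-2}b\bigr)(a-b)\,k\,dy\,dx=0,\qquad a=u_1(x)-u_1(y),\ b=u_2(x)-u_2(y).
\]
Strict monotonicity of $\xi\mapsto|\xi|^{p-2}\xi$ forces $u_1-u_2$ to be constant a.e. on $\Rn$, and since it vanishes outside $\Om$ the constant is zero. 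So $u_1=u_2$ a.e.

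\emph{Continuous representative.} To get $\LL$-harmonicity, replace $u$ inside $\Om$ by its locally H\"older continuous representative from \cite{DCKP16}, as discussed after Definition~\ref{def-supersol}, and keep $u=g$ outside $\Om$. The two versions agree a.e., so the modified $u$ still satisfies $u-g\in\VspoOm$. Pointwise everywhere uniqueness then holds because continuous functions that agree a.e.\ in $\Om$ agree everywhere, and both equal $g$ on $\Omc$.

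The main obstacle is the technical bookkeeping in the existence step. One must handle the possibly infinite $\Omc\times\Omc$ part of the energy, which the truncated functional $\tilde F$ does, and justify that minimizing over $\VspoOm$-perturbations gives the weak equation with $L^{p-1}_{sp}$ tails. Everything else is standard monotone-operator theory.
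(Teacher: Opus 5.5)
Your argument is correct. It reaches the result by a somewhat different route from the one the paper relies on.

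The paper does not prove this theorem itself. It quotes it from \cite[Theorem~4.9]{KL} and \cite[Theorem~9]{KKP17}. There the Dirichlet problem is the obstacle-free special case of the obstacle problem; note that \cite[Theorem~4.9]{KL} is invoked again in Section~\ref{sect-polar} for the $\K_{\phi,\phi}(\Om)$-obstacle problem. In that framework, as far as I recall the cited works, the ingredients are:
\begin{itemize}
\item existence from the abstract theory of monotone, coercive operators on closed convex sets (variational inequalities);
\item uniqueness from strict monotonicity of $\xi\mapsto|\xi|^{p-2}\xi$;
\item continuity from the De~Giorgi-type estimates of \cite{DCKP16}.
\end{itemize}

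You replace the monotone-operator existence theorem by the direct method applied to the convex functional $\tilde F$. This uses the symmetry of $k$, which makes $\EE(u,\cdot)$ the G\^ateaux derivative of $\tilde F$, and is more elementary and self-contained. Removing the $\Omc\times\Omc$ part of the energy is exactly what makes it work for data $g\in\VspOm$ that need not lie in $\Wsp(\Rn)$. The monotone-operator formulation does not need the operator to be the derivative of a functional, which is its advantage in greater generality. For the symmetric kernels here the two routes are equivalent, and yours extends equally to obstacles by minimizing over $\{v: v-g\in\VspoOm,\ v\ge h \text{ a.e.}\}$.

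Your uniqueness and regularity steps coincide with the standard ones. In the uniqueness step, say explicitly that $u_i=g+(u_i-g)\in\VspOm$, since that is what makes $\phi\mapsto\EE(u_i,\phi)$ continuous on $\VspoOm$ and justifies testing with $u_1-u_2$. As you do, the H\"older continuity must be taken in the form of the remark after Definition~\ref{def-supersol}, because $u$ need not belong to $\Wsp(\Rn)$.
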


\begin{deff} \label{deff-reg-pt}
Assume that $\Om$ is bounded.
A boundary point $x_0 \in \bdy \Om$ is \emph{regular} (with respect to $\LL$
and $\Om$)
if 
\begin{equation} \label{eq-deff-reg}
    \lim_{\Om \ni x \to x_0} H g(x)=g(x_0)
    \quad \text{for every } g \in \VspOm \cap C(\R^n).
\end{equation}  
 Otherwise we say that $x_0$ is \emph{irregular}.
\end{deff}  

This is the definition of regular boundary points
used in Kim--Lee--Lee~\cite{KLL23}.
Regularity can equivalently be formulated 
in terms of Perron solutions, see our paper~\cite{BBK1} 
and Lindgren--Lindqvist~\cite{LL17}.
Recall that regular boundary points are characterized by
the Wiener criterion in Theorem~\ref{thm-Wiener}.
Several other characterizations were given in \cite[Theorems~4.5 and~10.3]{BBK1}.
In particular, 
in~\eqref{eq-deff-reg} one can
equivalently consider all
bounded
$g \in \VspOm$ that are continuous at $x_0$, or even only $g(x) := \min\{|x-x_0|,1\}$.

In this paper, 
$A \simle A'$ (and $A' \simge A$) means that
$A\le CA'$
for some \emph{comparison constant}
$C >0$ independent of the quantities $A$ and $A'$.
If 
$A \simle A' \simle A$,
we write $A \simeq A'$.
For a ball 
\[
   B=B(x,r):=\{y \in \Rn : |y-x|<r\},
\]
we let $\la B=B(x,\la r)$ for $\la >0$.
By $E^c$ we denote the complement of the set $E$.

The following lemma is sometimes convenient.

\begin{lem} \label{lem-multiply-by-Lip}
Let $\eta$ be an $M$-Lipschitz function on $\Rn$ such that 
$0 \le \eta \le 1$.
If $u  \in \WspRn$, then
\begin{align} \label{eq-a-seminorm}
   [\eta u]_{\Wsp(\Rn)}^p 
    &\simle M^{sp} \|u\|_{L^p(\supp\eta)}^p + [u]_{\Wsp(\Rn)}^p, \\
\label{eq-a-norm}
     \|\eta u\|_{\Wsp(\Rn)} &\simle (M^{s}+1) \|u\|_{\Wsp(\Rn)}.
\end{align}
In particular, if $u_i \to u$ in $\Wsp(\Rn)$
then $\eta u_i \to \eta u \in \Wsp(\Rn)$.
\end{lem}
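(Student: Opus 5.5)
We prove Lemma~\ref{lem-multiply-by-Lip} directly from the definition of the seminorm, by splitting the difference $\eta u(x)-\eta u(y)$ into a part where the cutoff varies and a part where $u$ varies. For $x,y\in\Rn$ we write
\[
\eta(x)u(x)-\eta(y)u(y) = u(x)\bigl(\eta(x)-\eta(y)\bigr) + \eta(y)\bigl(u(x)-u(y)\bigr).
\]
Since $0\le\eta\le1$, the second term is bounded by $|u(x)-u(y)|$, and after integration against $|x-y|^{-n-sp}$ it contributes at most $[u]_{\Wsp(\Rn)}^p$, up to the factor $2^{p-1}$. The first term vanishes unless $x\in\supp\eta$ or $y\in\supp\eta$. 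To keep $u$ evaluated only on $\supp\eta$, we use symmetry of the double integral. We choose the decomposition according to which point lies in $\supp\eta$. If $x\in\supp\eta$, we use the formula above. If $x\notin\supp\eta$ but $y\in\supp\eta$, we use the symmetric formula $u(y)(\eta(x)-\eta(y))+\eta(x)(u(x)-u(y))$. If neither point lies in $\supp\eta$, both $\eta u$ values vanish. In every case the first term is bounded by $|u(z)|\,|\eta(x)-\eta(y)|$ for some $z\in\{x,y\}\cap\supp\eta$, and by symmetry it suffices to bound
\[
2\int_{\supp\eta}|u(x)|^p\int_{\Rn}\frac{|\eta(x)-\eta(y)|^p}{|x-y|^{n+sp}}\,dy\,dx .
\]

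The main step is the inner integral. We use $|\eta(x)-\eta(y)|\le\min\{M|x-y|,1\}$ and split at $|x-y|=1/M$ (the case $M=0$ is trivial, since then the first term vanishes). The near part gives
\[
M^p\int_{|x-y|<1/M}|x-y|^{p-n-sp}\,dy\simeq M^p M^{-(p-sp)}=M^{sp},
\]
where convergence uses $s<1$. The far part gives
\[
\int_{|x-y|\ge1/M}|x-y|^{-n-sp}\,dy\simeq M^{sp},
\]
where convergence uses $s>0$. Together these yield \eqref{eq-a-seminorm}, with constants depending only on $n,s,p$.

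For \eqref{eq-a-norm} we add $\|\eta u\|_{L^p}^p\le\|u\|_{L^p(\Rn)}^p$ and take $p$-th roots, noting that $(M^{sp}+1)^{1/p}\simeq M^s+1$. The final claim follows from the linearity of $u\mapsto\eta u$: applying \eqref{eq-a-norm} to $u_i-u$ gives $\|\eta u_i-\eta u\|_{\Wsp(\Rn)}\simle(M^s+1)\|u_i-u\|_{\Wsp(\Rn)}\to0$.

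The only delicate point is the bookkeeping that keeps $u$ evaluated on $\supp\eta$ rather than on all of $\Rn$. This is handled by the case split above, choosing in each case a decomposition whose first term carries $u$ at a point where $\eta$ is supported.
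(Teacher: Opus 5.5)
Your proposal is correct and follows essentially the same route as the paper. Both use the splitting $u(x)(\eta(x)-\eta(y))+\eta(y)(u(x)-u(y))$, the bound $\int_{\Rn}|\eta(x)-\eta(y)|^p|x-y|^{-n-sp}\,dy\simle M^{sp}$ obtained by splitting at $|x-y|=1/M$, and symmetry to keep $u$ evaluated on $\supp\eta$. The only difference is cosmetic: the paper restricts once to $[\eta u]_{\Wsp(\Rn)}^p\le 2\int_{\supp\eta}\int_{\Rn}|\eta u(x)-\eta u(y)|^p|x-y|^{-n-sp}\,dy\,dx$ and then applies the pointwise inequality, instead of your case split.
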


\begin{proof}
If $M=0$ there is nothing to prove, so assume that $M>0$.
Let $v=\eta u$.
Then
\begin{equation*}  
  |v(x)-v(y)| \le |u(x)|\,|\eta(x)-\eta(y)| + |u(x)-u(y)|  
\end{equation*}
and for all $x \in \Rn$,
\begin{align*} 
 \int_{\R^n}  \frac{|\eta(x)-\eta(y)|^p}{|x-y|^{n+sp}} \, dy
  &\simle  M^{p} \int_{B(x, 1/M)} |x-y|^{p(1-s)-n} \, dy \\
  &\quad + \int_{B(x, 1/M)^c} |x-y|^{-n-sp} \, dy 
\simle M^{sp}.
\end{align*}
Therefore, 
\begin{align*}
[v]_{\Wsp(\Rn)}^p
&\leq 2\int_{\supp \eta} \int_{\Rn} \frac{|v(x)-v(y)|^p}{|x-y|^{n+sp}} \,dy\,dx \\
&\simle M^{sp} \int_{\supp \eta} |u|^p\,dx + [u]_{\Wsp(\Rn)}^p,
\end{align*}
i.e.\ \eqref{eq-a-seminorm} holds.
Since obviously, $\|v\|_{L^p(\Rn)} \le \|u\|_{L^p(\Rn)}$,
we conclude that also \eqref{eq-a-norm} holds.
The last part now follows directly.
\end{proof}

\section{Capacities}\label{sec-capacity}

\emph{Recall the general assumptions from the beginning
of Section~\ref{sec-sobolev}.}

\medskip

Two types of capacities are used in this paper:
the Sobolev capacity and
the condenser capacity.
Since we need these capacities for noncompact sets we
define them as follows.

\begin{deff} \label{deff-cpt}
The \emph{condenser capacity} of a compact set $K \Subset \Om$ is given by
\[
    \cpt(K,\Om) = \inf_u {[u]_{W^{s,p}(\R^n)}^p},
\]
where the infimum is taken over all  $u\in C_c^\infty(\Om)$
such that 
$u \ge 1$  on $K$.
\end{deff}

\begin{deff}    \label{deff-Csp}
The \emph{Sobolev capacity} of a compact set $K \Subset \Rn$ is defined by
\begin{equation*} 
 \Cpt(K)   =\inf_u {\|u\|_{\Wsp(\Rn)}^p},
\end{equation*}
where the infimum is taken over all $u\in C_c^\infty(\Rn)$ such that 
$u \ge 1$ on $K$.
\end{deff}

Both capacities are then extended first to open and then to arbitrary sets 
in the usual way as follows:
For open sets $G$,
\begin{equation}   \label{eq-cap-G}
\begin{aligned}
\cpt(G,\Om) & = \sup_{\substack{K \text{ compact}\\ K \Subset G}} \cpt(K,\Om), && \text{if $G \subset \Om$}, \\
\Cpt(G) &= \sup_{\substack{K \text{ compact}\\ K \Subset G}} \Cpt(K), && \text{if $G \subset \Rn$}.
\end{aligned}
\end{equation}
Similarly, for arbitrary  sets~$E$,
\begin{equation}  \label{eq-cap-E}
\begin{aligned}
\cpt(E,\Om) &= \inf_{\substack{G \text{ open}\\ E \subset G \subset \Om}} \cpt(G,\Om), && \text{if $E \subset \Om$}, \\
\Cpt(E) &= \inf_{\substack{G \text{ open}\\ E \subset G}} \Cpt(G), && \text{if $E \subset \Rn$}.
\end{aligned}
\end{equation}

It is worth noticing that~\eqref{eq-cap-G} and~\eqref{eq-cap-E} do not change the 
definition of capacity for compact sets, 
cf.\  \cite[Section~5]{BBK1}.
In Kim--Lee--Lee~\cite[p.~29]{KLL25}, it is mentioned
that the fractional condenser capacity $\csp$
is a \emph{Choquet capacity} (i.e.\ 
it is monotone and satisfies the properties in Propositions~\ref{prop-Choq-Ki} 
and~\ref{prop-cspChoquet}),
with a reference
to Heinonen--Kilpel\"ainen--Martio~\cite{HeKiMa}. 
However, it seems far from obvious how the proof in~\cite{HeKiMa} 
(given in the nonlinear \emph{local} case associated with the $W^{1,p}$ Sobolev spaces) 
would carry over to the fractional setting.
So our next aim is to show that $\Csp$ and $\csp(\,\cdot\,,\Om)$ are 
indeed Choquet capacities 
(provided that $\Om$ is bounded).

We begin by showing the following result, which holds also for unbounded $\Om$.

\begin{prop} \label{prop-Choq-Ki}
If $K_1 \supset K_2 \supset \cdots$ are compact sets and $K=\bigcap_{i=1}^\infty K_i$,  then
\begin{alignat*}{2}
      \cpt(K,\Om)
          &= \lim_{i \to \infty} \cpt(K_i,\Om), &\quad& \text{if } K_1 \subset \Om, \\
      \Cpt(K) &          = \lim_{i \to \infty} \Cpt(K_i).
\end{alignat*}
\end{prop}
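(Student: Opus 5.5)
Since $K_1\supset K_2\supset\cdots\supset K$, monotonicity of the capacities (immediate from Definitions~\ref{deff-cpt} and~\ref{deff-Csp}, as every test function for $K_i$ is admissible for $K$) gives that $\lim_{i\to\infty}\cpt(K_i,\Om)$ exists and is at least $\cpt(K,\Om)$; the same holds for $\Cpt$. So only the reverse inequality $\lim_{i}\cpt(K_i,\Om)\le\cpt(K,\Om)$ needs proof. We may assume $\cpt(K,\Om)<\infty$, and we fix $\eps>0$ and an admissible $u\in C_c^\infty(\Om)$ with $u\ge1$ on $K$ and $[u]_{\Wsp(\Rn)}^p<\cpt(K,\Om)+\eps$.

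The key step is to pass from ``$u\ge1$ on $K$'' to ``$\ge1$ on a neighbourhood of $K$'' at small cost. Set $v=(1+\eps)u$. Then $v>1$ on $K$, and because $v$ is continuous, the set $G=\{v>1\}$ is an open set containing $K$. As $K_i$ are compact and decrease to $K\subset G$, a standard compactness argument (the sets $K_i\setm G$ are compact, decreasing, with empty intersection) gives $K_i\subset G$ for all sufficiently large $i$. For such $i$, $v\in C_c^\infty(\Om)$ is admissible for $\cpt(K_i,\Om)$, hence
\[
\cpt(K_i,\Om)\le[v]_{\Wsp(\Rn)}^p=(1+\eps)^p[u]_{\Wsp(\Rn)}^p<(1+\eps)^p(\cpt(K,\Om)+\eps).
\]
Letting $i\to\infty$ and then $\eps\to0$ gives the claim. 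The hypothesis $K_1\subset\Om$ is needed only so that all $\cpt(K_i,\Om)$ are defined, and boundedness of $\Om$ is never used.

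For $\Cpt$ we argue in exactly the same way, replacing $[\,\cdot\,]_{\Wsp(\Rn)}^p$ by $\|\cdot\|_{\Wsp(\Rn)}^p$, which also scales by $(1+\eps)^p$, and replacing $C_c^\infty(\Om)$ by $C_c^\infty(\Rn)$.

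There is essentially no real obstacle here, since the capacities of compact sets are defined via smooth (hence continuous) test functions. The only point to check is that the extensions \eqref{eq-cap-G} and \eqref{eq-cap-E} do not alter the values on compact sets, as remarked after \eqref{eq-cap-E}. This guarantees that the definition used for $K$ and the $K_i$ is the original one from Definitions~\ref{deff-cpt} and~\ref{deff-Csp}.
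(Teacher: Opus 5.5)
Your proof is correct. Its compactness step is the same as the paper's: the sets $K_i\setm G$ are compact, decreasing and have empty intersection, which forces $K_i\subset G$ for large $i$.

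The difference lies in where the open set $G$ comes from.
\begin{itemize}
\item The paper takes an arbitrary open $G\supset K$ and gets $\lim_i\csp(K_i,\Om)\le\csp(G,\Om)$ via \eqref{eq-cap-G}. It then takes the infimum over $G$ using \eqref{eq-cap-E}. That last step tacitly relies on the fact, quoted from \cite{BBK1}, that \eqref{eq-cap-E} reproduces Definition~\ref{deff-cpt} on compact sets.
\item You instead build the neighbourhood $G=\{(1+\eps)u>1\}$ directly from a near-optimal smooth test function. So you work only with Definitions~\ref{deff-cpt} and~\ref{deff-Csp}, and the same $(1+\eps)$-scaling in effect proves that consistency inline.
\end{itemize}

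The paper's version is shorter once the outer-regularity remark is granted, while yours is self-contained. Consequently, your closing caveat about the extensions \eqref{eq-cap-G} and \eqref{eq-cap-E} is not something your argument needs. It only ensures that the notation $\csp(K,\Om)$ is unambiguous.
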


\begin{proof}
Let $G$ be an open set such that $K \subset G \subset \Om$.
Then $G\cup \bigcup_{i=1}^\infty K_i^c$ is an open cover  of
the compact set $K_1$.
Thus, there is a finite subcover, i.e.\ an $N$ such that
\[
K_1\subset G \cup \bigcup_{i=1}^N K_i^c
= G \cup K_N^c.
\]
As $K_N\subset K_1$, it follows that $K_N \subset G$.
So $\lim_{i \to \infty} \csp(K_i,\Om) \le \csp(G,\Om)$.
Taking the infimum over all open sets $G$ with $K \subset G \subset \Om$ and using
\eqref{eq-cap-E} shows that
$\lim_{i \to \infty} \csp(K_i,\Om) \le \csp(K,\Om)$.
The reverse inequality follows directly from monotonicity.

The second formula is shown similarly.
\end{proof}

To show that $\csp(\,\cdot\,,E)$ is a Choquet capacity we also need to show the
following continuity for increasing unions.

\begin{prop} \label{prop-cspChoquet}
Assume that $\Om$ is bounded.
Let 
$E_1  \subset E_2 \subset \dots  \subset E:=\bigcup_{i=1}^\infty E_i \subset \Om$. 
Then 
\[
      \csp(E,\Om)
          = \lim_{i \to \infty} \csp(E_i,\Om).
\]
\end{prop}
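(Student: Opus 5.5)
We follow the standard route of Heinonen--Kilpel\"ainen--Martio, replacing the $W^{1,p}$ gradient arguments by the lattice inequality of Lemma~\ref{lem-A3} and the Hilbert-type convexity of $[\,\cdot\,]_{\Wsp(\Rn)}^p$. Monotonicity gives $\lim_{i\to\infty}\csp(E_i,\Om)\le\csp(E,\Om)$, so it suffices to prove the reverse inequality when $L:=\lim_i \csp(E_i,\Om)<\infty$.

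The first step is to extend the class of admissible functions. We will show that for open $G\subset\Om$,
\[
\csp(G,\Om)=\inf\{[u]_{\Wsp(\Rn)}^p : u\in\VspoOm,\ u\ge1 \text{ a.e.\ in } G\}.
\]
The inequality "$\ge$'' is immediate, since smooth admissible functions for compact $K\Subset G$ can be combined via $\max$ (Lemma~\ref{lem-Vspo-lattice}). For "$\le$'', given $u\in\VspoOm$ with $u\ge1$ a.e.\ in $G$, we take $K\Subset G$ and a cutoff, mollify $\min\{u,1\}$ slightly to get a function $\ge1-\eps$ on $K$, and use Lemma~\ref{lem-multiply-by-Lip} and Lemma~\ref{lem-truncation} to control the error. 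Consequently, for arbitrary $E\subset\Om$ with $\csp(E,\Om)<\infty$, there are $u\in\VspoOm$ with $0\le u\le1$, $u=1$ a.e.\ on an open neighbourhood of $E$, and $[u]^p$ arbitrarily close to $\csp(E,\Om)$.

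The second step carries out the main construction. Fix $\eps>0$. For each $i$ we choose an open $G_i\supset E_i$ and $u_i\in\VspoOm$, $0\le u_i\le1$, $u_i=1$ a.e.\ in $G_i$, with $[u_i]^p<\csp(E_i,\Om)+\eps2^{-i}$. We then set $v_i=\max\{u_1,\dots,u_i\}\in\VspoOm$, which equals $1$ a.e.\ on $U_i:=G_1\cup\dots\cup G_i$. The key estimate is, by induction,
\[
[v_i]^p\le \csp(E_i,\Om)+\eps\sum_{j\le i}2^{-j}.
\]
To prove it, we apply Lemma~\ref{lem-A3} pointwise with $a=v_{i-1}(x)$, $a'=u_i(x)$, $b=v_{i-1}(y)$, $b'=u_i(y)$ and integrate against $|x-y|^{-n-sp}$. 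This gives $[v_i]^p+[\min\{v_{i-1},u_i\}]^p\le[v_{i-1}]^p+[u_i]^p$. Since $\min\{v_{i-1},u_i\}=1$ a.e.\ on $G_{i-1}\cup\dots$ containing $E_{i-1}$ (because $E_{i-1}\subset E_i\subset G_i$ and $E_{i-1}\subset U_{i-1}$), its seminorm is at least $\csp(E_{i-1},\Om)$ by the first step. This is exactly the strong-subadditivity trick, and it closes the induction.

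The third step passes to the limit. The sequence $v_i$ is increasing and bounded in $\VspoOm$, using the Poincar\'e inequality since $\Om$ is bounded. By reflexivity and Mazur's lemma, or directly by monotone convergence together with Fatou, $v=\lim v_i\in\VspoOm$ with $[v]^p\le L+\eps$, and $v=1$ a.e.\ on the open set $U=\bigcup_i U_i\supset E$. Note that $\VspoOm$ is weakly closed and the $v_i$ converge weakly, so $v$ stays in $\VspoOm$. The first step then gives $\csp(E,\Om)\le\csp(U,\Om)\le L+\eps$, and letting $\eps\to0$ finishes the proof.

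I expect the main obstacle to be the first step. There we must justify that functions in $\VspoOm$ which are $1$ only a.e.\ on an open set are legitimate competitors for the capacity of that open set. This requires the truncation and cutoff approximation in $\Vsp(\Om)$, and care that mollification of a function supported compactly in $\Om$ stays in $C_c^\infty(\Om)$ with controlled seminorm.
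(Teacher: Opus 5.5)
Your route is valid and differs from the paper's. The paper also starts from near-optimal $u_i\in\VspoOm$ with $0\le u_i\le1$ and $u_i=1$ on open sets $G_i\supset E_i$ (Proposition~\ref{prop-csp=tcsp}). It then:
\begin{enumerate}
\item takes a weak limit $v$ of the $u_i$;
\item uses Mazur's lemma to get tail convex combinations $v_j=\sum_{i=j}^{N_j}a_{i,j}u_i$ with $\|v_j-v\|_{\Wsp(\Rn)}<2^{-j}\eps$;
\item sets $w=v+\sum_j|v_j-v|$, which is $\ge v_j=1$ on $\bigcap_{i=j}^{N_j}G_i\supset E_j$.
\end{enumerate}
That argument needs no energy inequality like Lemma~\ref{lem-A3}, only convexity and the triangle inequality. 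This is why it follows Kinnunen--Martio and Costea and transfers to settings where max/min energy inequalities fail. Your max/min induction instead exploits Lemma~\ref{lem-A3}. It produces a monotone sequence with explicit energy bounds, so the limit is pointwise, $v=1$ a.e.\ on $U$ is trivial, and Fatou gives $[v]^p_{\Wsp(\Rn)}\le L+\eps$. Compactness is then only needed to certify $v\in\VspoOm$. In effect you show that, once Proposition~\ref{prop-csp=tcsp} is available, the classical Heinonen--Kilpel\"ainen--Martio strong-subadditivity argument does go through here.

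The weak point is Step~1, and it is the half you did not flag. For open $G$, the inequality $\csp(G,\Om)\ge\inf\{\dots\}$ is not immediate. Lemma~\ref{lem-Vspo-lattice} only says maxima stay in $\VspoOm$; it gives no control of $[\,\cdot\,]_{\Wsp(\Rn)}$. You also need a single competitor for all of $G$, which requires a limit. This is exactly the nontrivial half of Proposition~\ref{prop-csp=tcsp}, which the paper proves by a weak-limit argument. Your Step~2 depends on it to choose $u_i$ for arbitrary $E_i$.

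You can close this with your own machinery, in two stages.
\begin{enumerate}
\item For compact $K$ and $u\in C_c^\infty(\Om)$ with $u\ge1$ on $K$, the function $\min\{(u/(1-\de))_+,1\}\in\VspoOm$ equals $1$ on the open set $\{u>1-\de\}\supset K$. Its energy is at most $(1-\de)^{-p}[u]^p_{\Wsp(\Rn)}$.
\item Run Steps~2--3 for compact $K_i$ exhausting $G$, each contained in the interior of the next. By \eqref{eq-cap-G}, this yields $v$ with $v=1$ a.e.\ on $G$ and $[v]^p_{\Wsp(\Rn)}\le\csp(G,\Om)+\eps$.
\end{enumerate}
Only after that can you treat arbitrary $E_i$.

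The half you were worried about is easier than your mollification sketch suggests. Note that mollifying $\min\{u,1\}$ need not stay compactly supported in $\Om$. Instead take $v_j\in C_c^\infty(\Om)$ with $v_j\to\min\{u,1\}$, and $\eta\in C_c^\infty(G)$ with $\eta=1$ on $K$. Then $w_j=v_j+\eta(1-v_j)\in C_c^\infty(\Om)$ equals $1$ on $K$. Moreover, $w_j-\min\{u,1\}=(1-\eta)(v_j-\min\{u,1\})$ a.e.\ tends to $0$ by Lemma~\ref{lem-multiply-by-Lip}.
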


The following characterization of the condenser capacity
will be convenient in
the proof of Proposition~\ref{prop-cspChoquet} 
and may  be of independent interest.
The proof in~\cite[Proposition~6.2]{BBK1} uses 
nontrivial boundary regularity results.
We therefore seize the opportunity to give a 
more elementary proof.

\begin{prop} \label{prop-csp=tcsp}
\textup{(\cite[Proposition~6.2]{BBK1})}
Assume that $\Om$ is bounded.
If $E \subset \Om$, then
\begin{equation} \label{eq-cs=tsc-E}
   \csp(E,\Om) = 
   \tcsp(E,\Om):=\inf_{u \in \At(E,\Om)} {[u]_{\Wsp(\Rn)}^{p}},
\end{equation}
where $ \At(E,\Om) =\{u \in \VspoOm: u\ge 1 \text{ in  an open set containing } E\}$.

If moreover $E=G$ is open and $\csp(G,\Om)<\infty$, 
then the infimum is attained by a function $u \in \At(G,\Om)$
such that additionally $0 \le u \le 1$ everywhere.
\end{prop}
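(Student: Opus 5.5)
We prove the two inequalities in \eqref{eq-cs=tsc-E} separately, first for compact sets, then for open sets, and finally for arbitrary sets via \eqref{eq-cap-G} and \eqref{eq-cap-E}.

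\emph{The inequality $\tcsp\le\csp$.} If $K\Subset\Om$ is compact and $u\in C_c^\infty(\Om)$ satisfies $u\ge1$ on $K$, then $(1+\eps)u\ge1$ on an open neighbourhood of $K$, by continuity. Moreover $(1+\eps)u\in\VspoOm$, so $(1+\eps)u\in\At(K,\Om)$. Letting $\eps\to0$ gives $\tcsp(K,\Om)\le\csp(K,\Om)$. Since $\tcsp$ is monotone, it follows that $\tcsp(G,\Om)\le\tcsp(E,\Om)$ whenever $E\subset G$.

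\emph{The inequality $\csp\le\tcsp$.} Let $u\in\At(E,\Om)$, so that $u\ge1$ on an open set $G\supset E$. By \eqref{eq-cap-E} it suffices to show $\csp(G,\Om)\le[u]^p_{\Wsp(\Rn)}$, and by \eqref{eq-cap-G} it suffices to show $\csp(K,\Om)\le[u]^p_{\Wsp(\Rn)}$ for compact $K\Subset G$. Take $u_j\in C_c^\infty(\Om)$ with $u_j\to u$ in $\Vsp(\Om)$, and fix a Lipschitz cutoff $\eta$ with $0\le\eta\le1$, $\eta=1$ on $K$ and $\supp\eta\Subset G$.

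Consider
\[
v_j := u_j + \eta\,(1-u_j)_+ .
\]
This function is Lipschitz with compact support in $\Om$ and satisfies $v_j\ge1$ on $K$. By Lemma~\ref{lem-multiply-by-Lip}, $\eta(1-u_j)_+\to\eta(1-u)_+$ in $\Wsp(\Rn)$; here we use $\Wsp(\Rn)$-convergence of $u_j$, which holds since the norms are equivalent on $\VspoOm$ for bounded $\Om$. To apply the lemma to the truncated function we should instead write $\eta(1-u_j)_+=\eta\,(1-\min\{u_j,1\})$ and use Lemma~\ref{lem-truncation}. Since $u\ge1$ on $G\supset\supp\eta$, the limit $\eta(1-u)_+$ vanishes, so $v_j\to u$ in $\Wsp(\Rn)$. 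Mollifying $v_j$ slightly, after multiplying by $1+\eps$, yields admissible $C_c^\infty(\Om)$ functions for $K$. Hence $\csp(K,\Om)\le[u]^p$, and taking the infimum over $u$ gives $\csp(E,\Om)\le\tcsp(E,\Om)$.

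\emph{Attainment for open $G$.} Take a minimizing sequence $u_j\in\At(G,\Om)$. Replacing $u_j$ by $\min\{(u_j)_+,1\}$ is allowed: it stays in $\VspoOm$ by Lemma~\ref{lem-Vspo-lattice}, stays $\ge1$ near $G$, and does not increase the seminorm by Lemma~\ref{lem-A3}. By the Poincar\'e inequality the sequence is bounded in the reflexive space $\VspoOm$. Using Mazur's lemma, convex combinations $w_j$ converge strongly, and also a.e. after passing to a subsequence, to some $u$ with $[u]^p\le\csp(G,\Om)$ by lower semicontinuity. Convex combinations of functions that are $\ge1$ on $G$ are again $\ge1$ on $G$, and $0\le w_j\le1$, so $u=1$ a.e.\ in $G$ and $0\le u\le1$ a.e.

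\emph{Main obstacle.} The remaining difficulty is that membership in $\At(G,\Om)$ requires $u\ge1$ pointwise everywhere on an open set containing $G$, not merely a.e.\ on $G$. Since $G$ is open and functions may be redefined on null sets (the space being equivalence classes), set $u:=1$ on $G$ and clip everywhere to $[0,1]$. The open set can then be taken to be $G$ itself. This requires care with the everywhere-defined convention, and that is the point where I would check the details.
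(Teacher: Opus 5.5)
\textbf{Verdict: there is a genuine gap, in the inequality $\tcsp\le\csp$ for open (and hence all noncompact) sets.}

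Your argument for $\csp(E,\Om)\le\tcsp(E,\Om)$ is fine. It is essentially the paper's argument with the correction term $\eta(1-u_j)_+$ in place of $\eta(u-u_j)$. The paper first truncates $u$ to $[0,1]$, so that $u=1$ on $\supp\eta$ and $w_j=u_j+\eta(1-u_j)$ already lies in $C_c^\infty(\Om)$ with $w_j=1$ on $K$; this avoids your mollification step.

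The problem is the reverse inequality. You prove $\tcsp(K,\Om)\le\csp(K,\Om)$ only for compact $K$ and then invoke monotonicity. But monotonicity gives $\tcsp(E,\Om)\le\tcsp(G,\Om)$ for $E\subset G$, not the inequality you wrote. In any case it cannot transfer the compact case to open sets. Since $\csp(G,\Om)=\sup_{K\Subset G}\csp(K,\Om)$, you need $\tcsp(G,\Om)\le\sup_{K\Subset G}\tcsp(K,\Om)$, i.e.\ inner regularity of $\tcsp$ on open sets. Monotonicity only gives the opposite, $\sup_{K\Subset G}\tcsp(K,\Om)\le\tcsp(G,\Om)$.

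This inner regularity is precisely the nontrivial content of the proposition. From functions that are $\ge1$ only on compact pieces of $G$, you must produce a single function in $\VspoOm$ that is $\ge1$ on a neighbourhood of all of $G$, with energy at most $\csp(G,\Om)$. Without this, $\tcsp(E,\Om)\le\csp(E,\Om)$ is unproved for every noncompact $E$.

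Your attainment paragraph is unsupported for the same reason. A minimizing sequence in $\At(G,\Om)$ only yields $[u]^p_{\Wsp(\Rn)}\le\tcsp(G,\Om)$, not $\le\csp(G,\Om)$. Moreover, the hypothesis $\csp(G,\Om)<\infty$ does not by itself tell you that $\tcsp(G,\Om)<\infty$, so you do not know that a bounded minimizing sequence exists.

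\textbf{The fix is to run your compactness argument on a different sequence, as the paper does.}
\begin{enumerate}
\item Exhaust $G$ by compact sets $K_1\subset K_2\subset\cdots$. Choose $u_j\in C_c^\infty(\Om)$ with $u_j\ge1$ on $K_j$ and $[u_j]^p_{\Wsp(\Rn)}<\csp(G,\Om)+1/j$.
\item By the Poincar\'e inequality the $u_j$ are bounded in $\VspoOm$. So a subsequence converges weakly to some $v\in\VspoOm$ with $[v]^p_{\Wsp(\Rn)}\le\csp(G,\Om)$.
\item Testing with the functionals $\phi\mapsto\int_A\phi\,dx$, for measurable $A\subset K_i$, shows that $v\ge1$ a.e.\ on every $K_i$, hence a.e.\ on $G$. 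Equivalently, in your Mazur formulation, the tail combinations $\sum_{i\ge j}a_{i,j}u_i$ are $\ge1$ on $K_j$ because the $K_i$ are nested.
\item Redefining $v$ on a null set and truncating gives $v\in\At(G,\Om)$ with $0\le v\le1$ and $v=1$ on $G$. Hence $\tcsp(G,\Om)\le[v]^p_{\Wsp(\Rn)}\le\csp(G,\Om)\le\tcsp(G,\Om)$.
\end{enumerate}
This one argument supplies both the missing inequality and the attainment.
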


The letter $\A$ stands for ``admissible functions''.
By truncation (and Lemma~\ref{lem-Vspo-lattice}),
the infimum can equivalently be taken over
$u  \in\At(E,\Om)$  
such that (additionally) $0 \le u \le 1$ everywhere.

\begin{proof}[Proof of Proposition~\ref{prop-csp=tcsp}]
As both $\csp$ and $\tcsp$ are outer capacities (i.e.\ they 
satisfy \eqref{eq-cap-E})
it is enough to consider $E=G$ open.

First, we
show the $\le$ inequality in~\eqref{eq-cs=tsc-E}.
Let $K \Subset G$ be compact
and $\eta \in C_c^\infty(G)$ be a cutoff  function such
that $\eta=1$ on $K$ and $0 \le \eta \le 1$ everywhere. 
Let $u \in \At(G,\Om)$.
By truncation (and Lemma~\ref{lem-Vspo-lattice}), we can assume that 
$0 \le  u \le 1$.
Then there are $v_j\in C_c^\infty(\Om)$
such that $v_j \to u$ in $\Wsp(\Rn)$.
It follows from 
Lemma~\ref{lem-multiply-by-Lip} that also
$(1-\eta)(u-v_j) \to 0$ in $\Wsp(\Rn)$ and hence
\[
w_j:=v_j + \eta(u-v_j)\to u
\quad \text{in $\Wsp(\Rn)$}.
\]
Since $u(x)=1$ when $\eta(x) \ne 0$, we see that
$w_j=v_j+\eta(1-v_j)\in C_c^\infty(\Om)$. 
Moreover, $w_j=1$ on $K$. 
So, by Definition~\ref{deff-cpt},
\[
     \csp(K,\Om) \le [w_j]_{\Wsp(\Rn)}^p 
       \le ([u]_{\Wsp(\Rn)}+[w_j-u]_{\Wsp(\Rn)})^p
          \to [u]_{\Wsp(\Rn)}^p, 
\]
as $j \to \infty$.
Taking the infimum over all $u \in \At(G,\Om)$ and then 
the supremum over all compact $K\Subset G$,
it then follows from \eqref{eq-cap-G} that
\[
     \csp(G,\Om) \le \tcsp(G,\Om).
\]

Conversely,    
assume that $\csp(G,\Om)<\infty$ (otherwise there is nothing to prove).
Choose compact sets  $K_1\subset K_2 \subset \dots$
and functions $u_j \in C_c^\infty(\Om)$ so that $u_j\ge1$ on $K_j$,
\[
G=\bigcup_{j=1}^\infty K_{j} \quad \text{and}  \quad 
[u_j]_{\WspRn}^p < \csp(G,\Om) + 1/j.
\]

Since $\Om$ is bounded,  it follows that $\{u_j\}_{j=1}^\infty$ is a bounded
sequence in $\Wsp(\Rn)$
and $[\,\cdot\,]_{\Wsp(\Rn)}$ is an equivalent norm on $\VspoOm$
(see Section~\ref{sec-sobolev}).
By Banach--Alaoglu's theorem, there is a subsequence of $\{u_j\}_{j=1}^\infty$ 
which converges weakly to some function $v \in \Wsp(\Rn)$.
Since closed subspaces are weakly closed (as a consequence of 
the Hahn--Banach theorem), $v\in \Vspo(\Om)$.
Moreover, the subsequence converges weakly to $v$
also with respect to the equivalent norm
$[\,\cdot\,]_{\Wsp(\Rn)}$ on $\VspoOm$.
Hence
\[
   [v]^{p}_{\WspRn} \le  \liminf_{j \to \infty} {[u_j]^{p}_{\WspRn}} \le \csp(G,\Om).
\]

Using that $\phi \mapsto \int_E \phi\,dx$ 
is a bounded linear functional 
on $\Wsp(\R^n)$ for every bounded measurable set $E$,  
we obtain  that  $v\ge1$ a.e.\ in $G$.
After redefinition on a set of measure zero 
and truncation (Lemma~\ref{lem-Vspo-lattice}) we get
$0 \le v \le 1$ everywhere on $\Rn$ and
$v =1$ everywhere in $G$.
Thus 
$v \in \At(G,\Om)$ and so
\[
\tcsp(G,\Om) \le [v]_{\WspRn}^p \le \csp(G,\Om)
\le \tcsp(G,\Om),
\]
which also shows that
the infimum is attained by $v$.
\end{proof}

In the proof of Proposition~\ref{prop-cspChoquet}
 we will also use Mazur's lemma in the following form.
For a proof see e.g.\ 
Rudin~\cite[Theorem~3.12]{rudinFA}.

\begin{lem} \label{lem-Mazur}
\textup{(Mazur's lemma)}
If $x_j \to x$ as $j \to \infty$, weakly 
in a normed linear space $V$, and $\eps>0$, 
then there is a convex combination
$\sum_{j=1}^N a_jx_j$, with $a_j \ge 0$ and $\sum_{j=1}^N a_j=1$,
such that 
\[
      \biggl\| x- \sum_{j=1}^N
      a_jx_j \biggr\|_V < \eps.
\]
\end{lem}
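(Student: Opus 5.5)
The statement immediately above is Mazur's lemma (Lemma~\ref{lem-Mazur}). The plan is to derive it from the Hahn--Banach separation theorem, applied to the convex hull of the sequence.

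First I would set $C$ to be the convex hull of $\{x_j\}_{j=1}^\infty$, that is, the set of all finite convex combinations $\sum_{j=1}^N a_jx_j$ with $a_j\ge0$ and $\sum_j a_j=1$. This set is convex, and so is its norm closure $\overline{C}$. The claim is exactly that $x\in\overline{C}$. So I argue by contradiction and suppose $x\notin\overline{C}$.

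Then $\dist(x,\overline{C})=\delta>0$. The open ball $B(x,\delta)$ is convex and open and does not meet $\overline{C}$. By the geometric Hahn--Banach theorem there is a continuous linear functional $\Lambda\in V^*$ and a real number $\gamma$ such that
\[
\operatorname{Re}\Lambda(y)<\gamma\le \operatorname{Re}\Lambda(z)
\quad\text{for all } y\in B(x,\delta) \text{ and } z\in\overline{C}.
\]
Applying the second inequality to each $x_j\in C$ gives $\operatorname{Re}\Lambda(x_j)\ge\gamma$ for all $j$. Since $x_j\to x$ weakly, $\Lambda(x_j)\to\Lambda(x)$, and hence $\operatorname{Re}\Lambda(x)\ge\gamma$. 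On the other hand $x\in B(x,\delta)$, so $\operatorname{Re}\Lambda(x)<\gamma$. This is a contradiction, so $x\in\overline{C}$.

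Membership $x\in\overline{C}$ means that for every $\eps>0$ some element of $C$ lies within distance $\eps$ of $x$. Such an element is a convex combination $\sum_{j=1}^N a_jx_j$, where we pad with zero coefficients so that the indices run from $1$ to $N$. This gives the required estimate.

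The only nontrivial step is the separation of a point (or open ball) from a closed convex set. This is standard and can be quoted, as the paper does via Rudin. In a real normed space, $\operatorname{Re}$ is simply omitted.
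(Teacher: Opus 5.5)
Your proof is correct and is essentially the standard argument: the paper gives no proof of its own and simply cites Rudin, Theorem~3.12 (norm and weak closures of a convex set coincide, proved by Hahn--Banach separation). Applying that theorem to the convex hull of $\{x_j\}$ yields exactly the argument you wrote out.
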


\begin{proof}[Proof of Proposition~\ref{prop-cspChoquet}]
The $\ge$ inequality follows directly from monotonicity.
For the $\le$ inequality we can  
assume that $L:=\lim_{i \to \infty} \csp(E_i,\Om) < \infty$
(otherwise there is nothing to prove).
Let $\eps>0$.

By Proposition~\ref{prop-csp=tcsp},
for each $i=1,2,\dots$\,,
we can find  an open set $G_i \supset E_i$ and
a function $u_i \in \VspoOm$ such that 
$u_i= 1$ in $G_i$, $0 \le u_i \le 1$ everywhere, and
\[
   [u_i]_{\Wsp(\Rn)}^{p}         < \csp(E_i,\Om)+\eps 
\le L +\eps.
\]
Since $\Om$ is bounded, it follows that $\{u_i\}_{i=1}^\infty$ is a bounded
sequence in $\Wsp(\Rn)$.

By Banach--Alaoglu's theorem, 
there is a subsequence of 
$\{u_i\}_{i=1}^\infty$
which converges weakly to some function $v \in \Wsp(\Rn)$.
As in the proof of Proposition~\ref{prop-csp=tcsp},
$v\in \Vspo(\Om)$
and
\begin{equation}\label{eq-v-liminf}
   [v]^{p}_{\WspRn} \le  \liminf_{i \to \infty} {[u_i]^{p}_{\WspRn}} 
   \le     (L + \eps)^{1/p}.
\end{equation}

Applying Mazur's lemma repeatedly to subsequences of the
above subsequence, with indices starting at $j$,
we can find 
convex combinations 
$v_j=\sum_{i=j}^{N_j} a_{i,j} u_i \in \Vspo(\Om)$
(with $a_{i,j} \ge 0$ and $\sum_{i=j}^{N_j}a_{i, j}=1$)
such that 
\begin{equation}\label{eq-vj-v}
\|v_j-v\|_{\Wsp(\Rn)} < 2^{-j}\eps.
\end{equation}
Then
 $v_j = 1$ in the open set $G_j':=\bigcap_{i=j}^{N_j} G_i \supset E_j$.
Let
\begin{equation*}
w := v + \sum_{j=1}^\infty |v_j-v|.
\end{equation*}
By \eqref{eq-vj-v}, $w \in \Vspo(\Om)$.
Since 
$w \ge v+v_j-v = v_j =1$ 
in $G_j'$ for each $j$,
we get that $w \geq 1$ 
in the open set
$\bigcup_{j=1}^\infty G_j' \supset E$.
We thus see using 
\eqref{eq-v-liminf}, \eqref{eq-vj-v}
and Proposition~\ref{prop-csp=tcsp} that
\[
      \csp(E,\Om)^{1/p} 
    \le [w]_{\Wsp(\Rn)}
  \le  [v]_{\Wsp(\Rn)} + 2\eps 
\le (L+ \eps)^{1/p} +2\eps.
\]
Letting $\eps \to 0$ shows the $\le$ inequality.
\end{proof}

The proof of Proposition~\ref{prop-cspChoquet} above 
was inspired by the proofs of 
Kinnunen--Martio~\cite[Theorem~4.1]{KiMaNov} 
and Costea~\cite[Theorem~3.1]{costea07}.

As we have now shown that $\csp(\,\cdot\,,\Om)$ is a Choquet capacity
(if $\Om$ is bounded), 
it follows from Choquet's capacitability theorem 
(see e.g.\ Aikawa--Ess\'en~\cite[Part~2, Section~10]{AE}) that
the condenser capacity has the following inner regularity.

\begin{thm} \label{thm-cap-inner-reg}
Assume that $\Om$ is bounded.
If $E \subset \Om$ is a Borel\/ \textup(or Suslin\/\textup) set, then
\begin{equation}  \label{eq-cap-by-sup-K}
\cpt(E,\Om) = \sup_{\substack{K \text{ compact}\\ K \Subset E}} \cpt(K,\Om).
\end{equation}
\end{thm}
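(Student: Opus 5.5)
The plan is to obtain Theorem~\ref{thm-cap-inner-reg} as a direct consequence of Choquet's capacitability theorem. The work consists of checking that $\csp(\,\cdot\,,\Om)$, viewed as a set function on subsets of $\Om$, is a Choquet capacity relative to the paving of compact subsets of $\Om$. We then read off \eqref{eq-cap-by-sup-K} for Suslin sets, and hence for Borel sets, since every Borel subset of the locally compact separable metric space $\Om$ is Suslin.

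\textbf{Step 1: the Choquet axioms.}
\begin{enumerate}
\item \emph{Monotonicity.} For compact sets this is immediate from Definition~\ref{deff-cpt}, since the admissible class shrinks as $K$ grows. It then passes to open sets through \eqref{eq-cap-G} and to arbitrary sets through \eqref{eq-cap-E}.
\item \emph{Continuity along decreasing sequences of compact sets.} This is Proposition~\ref{prop-Choq-Ki}.
\item \emph{Continuity along increasing sequences of arbitrary sets.} This is Proposition~\ref{prop-cspChoquet}, and it is the only place where boundedness of $\Om$ is used.
\end{enumerate}

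\textbf{Step 2: applying the capacitability theorem.} Choquet's theorem (in the form of Aikawa--Ess\'en) says that every Suslin set $E$ is capacitable:
\[
\csp(E,\Om)=\sup\{\csp(K,\Om): K\subset E \text{ compact}\}.
\]
A compact $K\subset E$ is automatically compact in $\Om$, so $K\Subset\Om$ and the supremum is the one in \eqref{eq-cap-by-sup-K}.

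\textbf{Expected obstacle: the outer definition.} The capacitability theorem, in the form usually stated, concerns the outer capacity generated by the values on compact sets. I therefore need to check that the $\csp(E,\Om)$ defined through \eqref{eq-cap-G}--\eqref{eq-cap-E} coincides with this outer capacity.
\begin{itemize}
\item For compact sets the two agree, as remarked after \eqref{eq-cap-E}. This consistency is exactly what is needed, and it is essentially Proposition~\ref{prop-Choq-Ki} together with a shrinking sequence of open neighbourhoods.
\item For an open $G$, the value is by definition the supremum over compact $K\Subset G$. So open sets are capacitable by construction, and the infimum over open supersets gives the outer extension.
\end{itemize}
With this consistency in place, the abstract theorem applies verbatim.
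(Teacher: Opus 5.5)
Your proposal is correct and follows the paper's route exactly: the paper likewise checks monotonicity, Proposition~\ref{prop-Choq-Ki} and Proposition~\ref{prop-cspChoquet} (where boundedness of $\Om$ enters), and then invokes Choquet's capacitability theorem as in Aikawa--Ess\'en. Your ``expected obstacle'' is unnecessary, since the abstract theorem applies to any set function with these three properties; and if you do want the compact-set consistency, do not attribute it to Proposition~\ref{prop-Choq-Ki}, whose proof already uses \eqref{eq-cap-E}, but get it directly from the fact that for admissible $u\in C_c^\infty(\Om)$ one has $(1+\eps)u\ge1$ in a neighbourhood of $K$.
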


Hence $\cpt$ coincides for Borel sets with the capacity $\capp_{B^s_p}$
considered in Bj\"orn~\cite{JBWien}, which was only defined for Borel sets, using
Definition~\ref{deff-cpt} and \eqref{eq-cap-by-sup-K}.

The condenser capacity is also strongly subadditive in the following form.
For compact $E$ and $E'$ this was shown by
Kim--Lee--Lee~\cite[Theorem~5.2]{KLL25} in a more
general Orlicz setting.

\begin{prop} \label{prop-strong-subadd}
\textup{(Strong subadditivity)}
Assume that $\Om$ is bounded.
If $E,E' \subset \Om$, then
\[ 
  \csp(E \cup E',\Om) + \csp(E \cap E',\Om) \le \csp(E,\Om) + \csp(E',\Om).
\] 
\end{prop}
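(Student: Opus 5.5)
The plan is to reduce to open sets and then to admissible functions, using the pointwise inequality of Lemma~\ref{lem-A3}. If $\csp(E,\Om)$ or $\csp(E',\Om)$ is infinite there is nothing to prove, so assume both are finite and let $\eps>0$. By \eqref{eq-cap-E} choose open sets $G \supset E$ and $G' \supset E'$ in $\Om$ with
\[
\csp(G,\Om) < \csp(E,\Om)+\eps
\quad\text{and}\quad
\csp(G',\Om) < \csp(E',\Om)+\eps .
\]
Since $G\cup G' \supset E\cup E'$ and $G\cap G'\supset E\cap E'$ are open, monotonicity shows that it suffices to prove the inequality for $G$ and $G'$. Then let $\eps \to 0$.

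For open sets I will use the characterization in Proposition~\ref{prop-csp=tcsp}. Take $u \in \At(G,\Om)$ and $u' \in \At(G',\Om)$; we may choose them as the minimizers, or as almost minimizers with $[u]^p_{\WspRn} < \csp(G,\Om)+\eps$ and similarly for $u'$. Set $v=\max\{u,u'\}$ and $w=\min\{u,u'\}$. By Lemma~\ref{lem-Vspo-lattice}, both $v$ and $w$ belong to $\VspoOm$. Moreover, $v\ge1$ on an open set containing $G\cup G'$, and $w \ge 1$ on an open set containing $G\cap G'$. (Each admissible function is $\ge1$ on an open neighbourhood, and the intersection of two such neighbourhoods is still open.) Hence $v\in\At(G\cup G',\Om)$ and $w\in \At(G\cap G',\Om)$.

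Next apply Lemma~\ref{lem-A3} pointwise with $a=u(x)$, $a'=u'(x)$, $b=u(y)$ and $b'=u'(y)$. This gives
\[
|v(x)-v(y)|^p+|w(x)-w(y)|^p \le |u(x)-u(y)|^p+|u'(x)-u'(y)|^p .
\]
Integrating against $|x-y|^{-n-sp}$ over $\Rn\times\Rn$ yields $[v]^p+[w]^p\le[u]^p+[u']^p$. Combined with Proposition~\ref{prop-csp=tcsp}, this gives
\[
\csp(G\cup G',\Om)+\csp(G\cap G',\Om) \le \csp(G,\Om)+\csp(G',\Om)+2\eps .
\]
Letting $\eps\to0$ finishes the proof.

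The only real subtlety is the step that needs $u,u'$ finite everywhere for Lemma~\ref{lem-A3}, which is stated for real numbers. After truncating to $0\le u,u'\le1$, which is allowed by the remark after Proposition~\ref{prop-csp=tcsp}, this is automatic, so I expect no serious obstacle. The main point is simply to work with $\tcsp$ rather than with the smooth test functions of Definition~\ref{deff-cpt}, since the max and min of smooth functions are not smooth.
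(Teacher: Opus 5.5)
Your proof is correct and follows the paper's argument: you take admissible $u,u'$ from $\At(\,\cdot\,,\Om)$, use Lemma~\ref{lem-Vspo-lattice} to place $\max\{u,u'\}$ and $\min\{u,u'\}$ in the right admissible classes, integrate Lemma~\ref{lem-A3}, and conclude via Proposition~\ref{prop-csp=tcsp}. Your preliminary reduction to open $G\supset E$, $G'\supset E'$ is harmless but unnecessary, since Proposition~\ref{prop-csp=tcsp} already holds for arbitrary $E\subset\Om$; your truncation to $0\le u,u'\le 1$ usefully makes explicit that Lemma~\ref{lem-A3} is applied to real numbers.
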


\begin{proof} 
Let $u \in \At(E,\Om)$ and $u' \in \At(E',\Om)$
in the notation of Proposition~\ref{prop-csp=tcsp}.
Then 
\[
u_{\max}:=\max\{u,u'\} \in \At(E \cup E',\Om)
\quad \text{and}  \quad
u_{\min}:=\min\{u,u'\} \in \At(E \cap E',\Om),
\]
by Lemma~\ref{lem-Vspo-lattice}.
Hence, by 
Proposition~\ref{prop-csp=tcsp}
and
Lemma~\ref{lem-A3},
\begin{align*}
  \csp(E \cup E',\Om) + \csp(E \cap E',\Om) 
  & \le   [u_{\max}]_{\Wsp(\Rn)}^p + [u_{\min}]_{\Wsp(\Rn)}^p\\
  &\le   [u]_{\Wsp(\Rn)}^p + [u']_{\Wsp(\Rn)}^p.
\end{align*}
Taking the infimum over all $u \in \At(E,\Om)$ and $u' \in \At(E',\Om)$,
and again appealing to 
Proposition~\ref{prop-csp=tcsp},
yields the desired inequality. 
\end{proof}

Together with Proposition~\ref{prop-cspChoquet}
this now directly leads to the countable subadditivity
of the condenser capacity.
In  \cite[Proposition~3.7]{BBK2} we gave 
another proof not relying on Proposition~\ref{prop-cspChoquet}.

\begin{prop} \label{prop-csp-subadd}
\textup{(Countable subadditivity, \cite[Proposition~3.7]{BBK2})}
Assume that $\Om$ is bounded.
If $E:=\bigcup_{i=1}^\infty E_i \subset \Om$, then 
\begin{equation*} 
      \csp(E,\Om)
          \le \sum_{i=1}^\infty \csp(E_i,\Om).
\end{equation*}
\end{prop}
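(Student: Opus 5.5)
The plan is to derive countable subadditivity from two earlier results. Strong subadditivity (Proposition~\ref{prop-strong-subadd}) gives finite subadditivity. Continuity along increasing unions (Proposition~\ref{prop-cspChoquet}) then passes to the countable union.

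First, finite subadditivity. For $E,E'\subset\Om$, Proposition~\ref{prop-strong-subadd} gives
\[
\csp(E\cup E',\Om)+\csp(E\cap E',\Om)\le \csp(E,\Om)+\csp(E',\Om).
\]
Since $\csp(E\cap E',\Om)\ge 0$ (it is an infimum of $p$-th powers of seminorms), we obtain
\[
\csp(E\cup E',\Om)\le \csp(E,\Om)+\csp(E',\Om).
\]
If the right-hand side is infinite there is nothing to prove. If it is finite, the inequality still holds without cancelling anything, because we only discard a nonnegative term. Induction on $N$ then gives
\[
\csp\biggl(\bigcup_{i=1}^N E_i,\Om\biggr)\le \sum_{i=1}^N \csp(E_i,\Om)
\]
for every $N$.

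Second, the passage to the limit. Set $F_N:=\bigcup_{i=1}^N E_i$. These sets are increasing, contained in $\Om$, and their union is $E$. By Proposition~\ref{prop-cspChoquet}, which applies since $\Om$ is bounded and places no restriction on the sets,
\[
\csp(E,\Om)=\lim_{N\to\infty}\csp(F_N,\Om)\le \lim_{N\to\infty}\sum_{i=1}^N\csp(E_i,\Om)=\sum_{i=1}^\infty\csp(E_i,\Om).
\]

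No step here is a real obstacle, since the substantive work sits in Propositions~\ref{prop-cspChoquet} and~\ref{prop-strong-subadd}. The only care needed is to treat possibly infinite values correctly in the finite step, which is why the argument drops the nonnegative term $\csp(E\cap E',\Om)$ rather than subtracting it.
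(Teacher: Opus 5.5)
Your proof is correct and is essentially the paper's own argument: finite subadditivity from strong subadditivity (Proposition~\ref{prop-strong-subadd}) by induction, then the limit $N\to\infty$ via continuity along increasing unions (Proposition~\ref{prop-cspChoquet}). Dropping the nonnegative term $\csp(E\cap E',\Om)$ rather than subtracting it handles infinite values cleanly.
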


\begin{proof}
By Proposition~\ref{prop-strong-subadd} and induction we see that
\[ 
      \csp\biggl(\bigcup_{i=1}^j E_i,\Om\biggr)
          \le \sum_{i=1}^j \csp(E_i,\Om).
\] 
The inequality then follows from Proposition~\ref{prop-cspChoquet}
upon letting $j \to \infty$.
\end{proof}

The following convenient formula 
for the Sobolev capacity corresponds to 
Proposition~\ref{prop-csp=tcsp} for the condenser capacity.

\begin{prop} \label{prop-Wsp-Csp-1}
\textup{(\cite[Lemma~5.7]{BBK1})}
Let $E \subset\Rn$.
Then
\begin{equation*} 
   \Cpt(E) = \inf_{u} {\|u\|^p_{\Wsp(\Rn)}},
\end{equation*}
where the infimum is taken over all $u \in \Wsp(\Rn)$ such that
$u \ge 1$
in  an open set containing $E$.
\end{prop}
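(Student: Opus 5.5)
The plan is to mirror the proof of Proposition~\ref{prop-csp=tcsp}, now with the full norm $\|\cdot\|_{\Wsp(\Rn)}$ and without the restriction to $\VspoOm$. Write $\widetilde{C}(E)$ for the infimum on the right-hand side.

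First, $\widetilde{C}$ is outer, i.e.
\[
\widetilde{C}(E)=\inf_{\substack{G \text{ open}\\ E\subset G}} \widetilde{C}(G).
\]
Indeed, $u$ is admissible for $E$ exactly when $u$ is admissible for some open $G\supset E$. Since $\Csp$ satisfies the same formula by \eqref{eq-cap-E}, it suffices to prove $\Csp(G)=\widetilde{C}(G)$ for open $G$.

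\emph{The inequality $\Csp(G)\le\widetilde{C}(G)$.}
Take $u\in\Wsp(\Rn)$ with $u\ge1$ in $G$. Replacing $u$ by $\min\{u_+,1\}$ does not increase $\|u\|_{\Wsp(\Rn)}$, because the integrand in \eqref{eq-seminorm} decreases under truncation. Hence we may assume $0\le u\le1$ and $u=1$ in $G$.

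Fix a compact $K\Subset G$ and a cutoff $\eta\in C_c^\infty(G)$ with $0\le\eta\le1$ and $\eta=1$ on $K$. By density of $C_c^\infty(\Rn)$ in $\Wsp(\Rn)$, choose $v_j\in C_c^\infty(\Rn)$ with $v_j\to u$ in $\Wsp(\Rn)$. Set
\[
w_j:=v_j+\eta(u-v_j)=v_j+\eta(1-v_j),
\]
where the second equality uses $u=1$ on $\supp\eta\subset G$. Then $w_j\in C_c^\infty(\Rn)$ and $w_j=1$ on $K$. Moreover $w_j\to u$ in $\Wsp(\Rn)$ by Lemma~\ref{lem-multiply-by-Lip}, applied with the Lipschitz function $1-\eta$ to $u-v_j\to0$.

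It follows that
\[
\Csp(K)\le\|w_j\|^p_{\Wsp(\Rn)}\to\|u\|^p_{\Wsp(\Rn)}.
\]
Taking the supremum over $K$ via \eqref{eq-cap-G}, and then the infimum over $u$, gives $\Csp(G)\le\widetilde{C}(G)$.

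\emph{The inequality $\widetilde{C}(G)\le\Csp(G)$.}
Assume $\Csp(G)<\infty$ and exhaust $G=\bigcup_j K_j$ by increasing compact sets. By \eqref{eq-cap-G}, pick $u_j\in C_c^\infty(\Rn)$ with $u_j\ge1$ on $K_j$ and
\[
\|u_j\|^p_{\Wsp(\Rn)}<\Csp(G)+1/j .
\]
Since $\Wsp(\Rn)$ is reflexive, a subsequence converges weakly to some $v\in\Wsp(\Rn)$, and by weak lower semicontinuity of the norm $\|v\|^p_{\Wsp(\Rn)}\le\Csp(G)$.

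Testing against the bounded functionals $\phi\mapsto\int_{B}\phi\,dx$, for bounded measurable $B\subset\{v<1\}\cap K_i$, shows that $v\ge1$ a.e.\ on each $K_i$, hence a.e.\ in $G$. Redefining $v:=1$ on the null set $\{x\in G: v(x)<1\}$ does not change the norm, and it makes $v\ge1$ everywhere in the open set $G$. Thus $v$ is admissible and $\widetilde{C}(G)\le\|v\|^p_{\Wsp(\Rn)}\le\Csp(G)$.

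The only delicate points are the passage from ``a.e.''\ to ``everywhere in an open set'' in the second inequality, which the pointwise redefinition handles since functions are defined pointwise, and the fact that the cutoff construction needs $u=1$ on $\supp\eta$ rather than merely $u\ge1$, which is why we truncate first. I do not expect a serious obstacle; no Mazur-type argument is needed here, because the limit only has to be admissible for the single open set $G$.
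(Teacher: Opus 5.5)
Your proof is correct and is essentially the argument the paper gives for the condenser analogue, Proposition~\ref{prop-csp=tcsp}; for this statement itself the paper only cites earlier work (Lemma~5.7 of its reference BBK1). Concretely, you reduce to open $G$ by outer regularity. You then get $\Csp(G)\le\widetilde{C}(G)$ by truncating and using the smooth admissible functions $w_j=v_j+\eta(1-v_j)$ together with Lemma~\ref{lem-multiply-by-Lip}. Finally, you get the reverse inequality via weak compactness, testing against $\phi\mapsto\int_B\phi\,dx$, and redefining the weak limit on a null set, with no Mazur step needed.
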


With this formula in hand, it is easy to modify 
Propositions~\ref{prop-cspChoquet}, \ref{prop-strong-subadd}, \ref{prop-csp-subadd}
and Theorem~\ref{thm-cap-inner-reg} (and their proofs) for the Sobolev capacity.
(In this case  no boundedness assumption is needed on the sets.)

The following result 
relates the two capacities $\cpt$ and $\Cpt$
and shows that they have the same zero sets.

\begin{lem} \label{lem-cp-Cp}
\textup{(\cite[Proposition~5.4]{BBK1})}
Assume that $\Om$ is bounded.
Let $E  \Subset \Om$.
Then  
\begin{equation*} 
\frac{\Csp(E)}{1+(\diam\Om)^{sp}} \simle
  \csp(E,\Om) \simle \biggl(1+\frac{1}{\dist(E,\Omc)^{p}}  \biggr) \Csp(E).
\end{equation*}  
In particular, $\Cpt(E)=0$ if and only if  $\cpt(E,\Om)=0$.
\end{lem}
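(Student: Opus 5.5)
The plan is to prove the two inequalities in Lemma~\ref{lem-cp-Cp} separately, by turning test functions for one capacity into test functions for the other. By the outer regularity \eqref{eq-cap-E} and the definition \eqref{eq-cap-G}, it is enough to treat compact $K\Subset\Om$. The extension from $K$ to $E$ goes as follows. For the upper bound we pass from compact $K$ to open $G$ with $E\subset G$ and $\dist(G,\Omc)\ge \tfrac12\dist(E,\Omc)$. For the lower bound we use open $G\subset\Om$, taking sup over compacts inside and then inf over $G$. Constants change only by a factor $2^p$, which is harmless.

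For the lower bound $\Csp(K)\simle(1+(\diam\Om)^{sp})\csp(K,\Om)$, take $u\in C_c^\infty(\Om)$ with $u\ge1$ on $K$. The function $u$ is itself admissible for $\Csp(K)$, so
\[
\Csp(K)\le \|u\|_{L^p(\Rn)}^p+[u]_{\WspRn}^p .
\]
It remains to control $\|u\|_{L^p(\Om)}^p\simle(\diam\Om)^{sp}[u]^p_{\WspRn}$, which is the fractional Poincar\'e inequality for functions vanishing outside $\Om$. I will prove it directly. Fix $x\in\Om$. Since $|\Om|\le|B(0,\diam\Om)|$, the set $\{y:|x-y|<2\diam\Om\}\setm\Om$ has measure $\simeq(\diam\Om)^n$. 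For $y$ in this set, $u(y)=0$ and $|x-y|\simle\diam\Om$. Hence
\[
\int_{\Rn}\frac{|u(x)-u(y)|^p}{|x-y|^{n+sp}}\,dy\simge |u(x)|^p(\diam\Om)^{-sp}.
\]
Integrating over $x\in\Om$ gives the claim. Taking the infimum over $u$ yields the lower bound.

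For the upper bound, take $u\in C_c^\infty(\Rn)$ with $u\ge1$ on $K$, and set $d=\dist(K,\Omc)$. Choose a Lipschitz cutoff $\eta$ with $\eta=1$ on $K$, $\supp\eta\subset\{x:\dist(x,K)\le d/2\}\Subset\Om$, $0\le\eta\le1$, and Lipschitz constant $M=2/d$. By Lemma~\ref{lem-multiply-by-Lip},
\[
[\eta u]^p_{\WspRn}\simle d^{-sp}\|u\|^p_{L^p}+[u]^p_{\WspRn}\le(1+d^{-p})\|u\|^p_{\WspRn},
\]
since $d^{-sp}\le 1+d^{-p}$. The function $\eta u$ is Lipschitz with compact support in $\Om$. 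Mollifying it slightly gives functions in $C_c^\infty(\Om)$ with seminorm close to $[\eta u]_{\WspRn}$. These functions need not be $\ge1$ exactly on $K$, so I will instead use $(1+\eps)\eta u$ and mollify at a scale small enough that the result is $\ge1$ on $K$; this works because $\eta u$ is continuous and $\ge1$ on $K$. Alternatively, invoke Proposition~\ref{prop-csp=tcsp}, noting that $\eta u\ge1$ only on $K$ and not on a neighbourhood, so the scaling trick is still needed. Infimizing over $u$ gives the upper bound.

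The only delicate points are these smoothing details and the Poincar\'e step with its correct scaling. The final assertion follows immediately: for $E\Subset\Om$ with $\Om$ bounded, both factors are finite and positive.
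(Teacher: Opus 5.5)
The paper does not prove this lemma itself; it only cites \cite[Proposition~5.4]{BBK1}. Your argument is correct and is essentially the expected one: you reduce to compact sets via \eqref{eq-cap-G}--\eqref{eq-cap-E}, get the lower bound from the fractional Poincar\'e inequality for functions vanishing outside $\Om$ (the ingredient the paper points to as \cite[(5.6)]{BBK1}), and get the upper bound from a $2/\dist(K,\Omc)$-Lipschitz cutoff with Lemma~\ref{lem-multiply-by-Lip} plus a $(1+\eps)$-scaled mollification, used the same way as in the proof of Lemma~\ref{lem-comp-cap}.
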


The following lemma is from Kim--Lee--Lee~\cite[Lemma~2.17]{KLL23}.
The first part follows directly from Bj\"orn~\cite[Lemma~2.4]{JBWien}, together with
Heinonen--Kilpel\"ainen--Martio~\cite[Theorems~2.18 and 2.19]{HeKiMa}.
The last part is then a consequence of \eqref{eq-cap-E}.

\begin{lem}\label{lem-cap}
Let $x_0 \in \Rn$.
If $0<r\leq R/2$, then
\begin{equation*} 
\csp(B(x_0, r),B(x_0,R))
\simeq
\begin{cases}
r^{n-sp}, &\text{if }sp<n, \\
R^{n-sp}, &\text{if }sp>n, \\
(\log (R/r))^{1-p}, &\text{if }sp=n.
\end{cases}
\end{equation*} 
In particular,
\begin{equation} \label{eq-cap-sp>n}
\csp(\{x_0\}, B(x_0,R))
\simeq R^{n-sp} \quad \text{when } sp>n.
\end{equation}
\end{lem}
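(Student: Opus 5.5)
By translation and dilation invariance I reduce to a one-parameter family and then prove matching upper and lower bounds case by case, using explicit test functions for the upper bounds and embedding-type inequalities for the lower bounds.

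\emph{Reduction.} Both capacities are translation invariant, so take $x_0=0$. If $u_\rho(x)=u(x/\rho)$, a change of variables gives $[u_\rho]_{\Wsp(\Rn)}^p=\rho^{n-sp}[u]_{\Wsp(\Rn)}^p$. Also $u\ge1$ on $\clB(0,1)$ with $u\in C_c^\infty(B(0,t))$ if and only if $u_r\ge1$ on $\clB(0,r)$ with $u_r\in C_c^\infty(B(0,tr))$. Hence
\[
\csp(B(0,r),B(0,R))=r^{n-sp}f(R/r), \qquad f(t):=\csp(B(0,1),B(0,t)).
\]
So it suffices to show, for $t\ge2$, that $f(t)\simeq1$ if $sp<n$, $f(t)\simeq t^{n-sp}$ if $sp>n$, and $f(t)\simeq(\log t)^{1-p}$ if $sp=n$. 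The capacity of the open ball is the supremum over compact subsets, so it is enough to estimate $\csp(\clB(0,\rho),B(0,t))$ uniformly in $\rho\in(\tfrac12,1)$.

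\emph{Upper bounds.} If $sp<n$, one fixed Lipschitz cutoff equal to $1$ on $B(0,1)$ and supported in $B(0,2)$ works. Mollification is harmless here, and the seminorm is finite by the computation in Lemma~\ref{lem-multiply-by-Lip}. If $sp>n$, take $u(x)=\min\{1,2(1-|x|/t)_+\}$; this is the $t$-dilate of a fixed profile, so $[u]^p\simeq t^{n-sp}$. If $sp=n$, take the logarithmic function $u(x)=\min\{1,\log(t/|x|)_+/\log t\}$. To estimate $[u]^p$, I split $\Rn\times\Rn$ according to the dyadic annuli $A_k=\{2^{k}\le|x|<2^{k+1}\}$, $0\le k\lesssim\log_2 t$. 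In $A_k$, $|\nabla u|\lesssim 2^{-k}/\log t$. For $x\in A_k$ I bound $|u(x)-u(y)|$ by $|x-y|2^{-k}/\log t$ when $|x-y|<2^{k-1}$. Otherwise I use $|u(x)-u(y)|\lesssim \log(2+|x-y|/2^k)/\log t$, which reflects the logarithmic modulus of $u$. With $sp=n$, each annulus then contributes $\lesssim(\log t)^{-p}$ to $\int_{A_k}\int_{\Rn}$, and there are $\simeq\log t$ annuli, giving $[u]^p\lesssim(\log t)^{1-p}$.

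\emph{Lower bounds.} If $sp<n$, the fractional Sobolev inequality $\|u\|_{L^{np/(n-sp)}}\lesssim[u]_{\Wsp(\Rn)}$ together with $u\ge1$ on $B(0,\tfrac12)$ gives $[u]^p\gtrsim1$. If $sp>n$, the fractional Morrey inequality $|u(x)-u(y)|\lesssim[u]\,|x-y|^{s-n/p}$ with $x=0$ and $|y|=t$, where $u(0)\ge1$ and $u(y)=0$, gives $[u]^p\gtrsim t^{n-sp}$. This argument uses only $u(0)\ge1$, so it also proves the lower bound in \eqref{eq-cap-sp>n}. The matching upper bound there follows from monotonicity: $\{0\}\subset B(0,R/2)$, and then the ball estimate with $r=R/2$ applies.

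\emph{Main obstacle: the lower bound for $sp=n$.} Let $m\simeq\log_2 t$ and consider the overlapping annular regions $D_k=B(0,2^{k+1})\setm B(0,2^{k-1})$ for $1\le k\le m+1$, where the last one lies outside $B(0,t)$ so that its average $a_{m+1}$ vanishes. Put $D_0=B(0,1)$, on which the average satisfies $a_0\ge1$, and let $a_k$ be the average of $u$ over $D_k$. Consecutive regions overlap in a set of comparable measure. The fractional Poincar\'e inequality on $D_k\cup D_{k+1}$ therefore gives
\[
|a_k-a_{k+1}|^p\lesssim 2^{k(sp-n)}\int_{D_k\cup D_{k+1}}\int_{D_k\cup D_{k+1}}\frac{|u(x)-u(y)|^p}{|x-y|^{n+sp}}\,dy\,dx,
\]
and here $2^{k(sp-n)}=1$ because $sp=n$. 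The product sets $(D_k\cup D_{k+1})^2$ have bounded overlap, so summing over $k$ is controlled by $[u]^p$. Then H\"older's inequality gives
\[
1\le\sum_{k=0}^{m}|a_k-a_{k+1}|\le m^{1-1/p}\Bigl(\sum_k|a_k-a_{k+1}|^p\Bigr)^{1/p}\lesssim m^{1-1/p}[u],
\]
that is, $[u]^p\gtrsim m^{1-p}\simeq(\log t)^{1-p}$. The main checks here are the bounded overlap and that the Poincar\'e constant scales correctly on the annular regions.
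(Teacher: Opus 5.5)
Your argument is correct in substance, but it takes a genuinely different route from the paper. The paper does not prove the lemma by hand. It cites \cite[Lemma~2.17]{KLL23} and remarks that the ball estimates follow from \cite[Lemma~2.4]{JBWien}, which relates $\csp$ to a weighted Sobolev capacity in $\R^{n+1}$, combined with the capacity estimates for balls with respect to $p$-admissible weights in \cite[Theorems~2.18 and~2.19]{HeKiMa}. It then reads off \eqref{eq-cap-sp>n} from \eqref{eq-cap-E}: every open $G\ni x_0$ contains some $B(x_0,r)$ with $r\le R/2$, and the bound $R^{n-sp}$ is uniform in $r$ when $sp>n$.

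You stay entirely in $\Rn$. Exact scaling reduces everything to $f(t)$, and explicit profiles give the upper bounds. The fractional Sobolev and Morrey inequalities give the lower bounds for $sp\ne n$, with Morrey also handling the point case directly. Chaining of dyadic averages gives the lower bound for $sp=n$. Your route is self-contained and avoids extension/trace theory and weighted spaces, at the cost of doing the borderline case $sp=n$ by hand. The cited route gets all three regimes at once from a single weighted formula, but rests on heavier machinery.

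One step is false as written. In the upper bound for $sp=n$, the estimate $|u(x)-u(y)|\lesssim\log(2+|x-y|/2^k)/\log t$ for $x\in A_k$ and $|x-y|\ge2^{k-1}$ fails when $|y|\ll|x|$. For $|y|<1$ the left-hand side equals $\log|x|/\log t\simeq k/\log t$, while the right-hand side is $\simeq1/\log t$. The bound is fine when $|y|\ge|x|/2$. For $|y|<|x|/2$, use instead $|u(x)-u(y)|\le\log(|x|/\max\{|y|,1\})/\log t$ together with $|x-y|\ge|x|/2$:
\[
\int_{B(0,|x|/2)}\frac{|u(x)-u(y)|^p}{|x-y|^{2n}}\,dy
\lesssim\frac{|x|^{-2n}}{(\log t)^{p}}\int_{B(0,|x|/2)}\log^p\frac{|x|}{|y|}\,dy
\simeq\frac{|x|^{-n}}{(\log t)^{p}}.
\]
The last step uses the local integrability of $\log^p(1/|z|)$. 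Integrated over $A_k$ this is again $\lesssim(\log t)^{-p}$, so your conclusion $[u]^p_{\Wsp(\Rn)}\lesssim(\log t)^{1-p}$ survives.

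There are also some smaller points:
\begin{itemize}
\item The pairs $x\in B(0,1)$, $y\notin B(0,t)$ are not covered by your annuli. They contribute only $\simeq t^{-n}$, which is harmless.
\item Your test functions for $sp\ge n$ are supported in $\clB(0,t)$ rather than compactly in $B(0,t)$. Shrink the radius slightly, or use Proposition~\ref{prop-csp=tcsp}.
\item In the chaining, $D_0=B(0,1)$ is disjoint from $D_1$, and $u\ge1$ is only known on $\clB(0,\rho)$. Take $D_0=B(0,\tfrac12)$ instead.
\end{itemize}
Neither overlap nor connectedness is actually needed (connectedness fails for annuli when $n=1$), since Jensen's inequality gives directly
\[
|a_k-a_{k+1}|^p\le\frac{1}{|D_k|\,|D_{k+1}|}\int_{D_k}\int_{D_{k+1}}|u(x)-u(y)|^p\,dy\,dx
\lesssim 2^{k(sp-n)}\int_{D_k}\int_{D_{k+1}}\frac{|u(x)-u(y)|^p}{|x-y|^{n+sp}}\,dy\,dx.
\]
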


The following lemma is well known. 
It is a direct consequence of \eqref{eq-cap-E}
and Lemmas~\ref{lem-cp-Cp} and~\ref{lem-cap}.

\begin{lem} \label{lem-sp<=n}
Assume that $\Om$ is bounded.
Let $x_0 \in \Om$.
Then the following are equivalent\/\textup:
\begin{enumerate}
\item
$sp \le n$,
\item
$\Csp(\{x_0\})=0$, 
\item
$\csp(\{x_0\},\Om)=0$.
\end{enumerate}
\end{lem}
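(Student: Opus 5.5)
The plan is to prove \textup{(b)}$\Leftrightarrow$\textup{(c)} directly from Lemma~\ref{lem-cp-Cp}, and then \textup{(a)}$\Leftrightarrow$\textup{(c)} by comparing $\Om$ with balls centred at $x_0$ and applying Lemma~\ref{lem-cap}.

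\emph{\textup{(b)}$\Leftrightarrow$\textup{(c)}.} Since $x_0\in\Om$ and $\Om$ is open, $\{x_0\}\Subset\Om$, and $\Om$ is bounded. So the last statement of Lemma~\ref{lem-cp-Cp} gives $\Csp(\{x_0\})=0$ if and only if $\csp(\{x_0\},\Om)=0$.

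\emph{Monotonicity in the second argument.} For the remaining equivalence I first record that if $E\subset\Om\subset\Om'$, then $\csp(E,\Om')\le\csp(E,\Om)$. This is checked level by level from the definitions.
\begin{itemize}
\item For compact $K$, every $u\in C_c^\infty(\Om)$ also lies in $C_c^\infty(\Om')$. So the admissible class in Definition~\ref{deff-cpt} only grows when $\Om$ is replaced by $\Om'$, and the infimum can only decrease.
\item For open $G\subset\Om$, the supremum in \eqref{eq-cap-G} runs over the same compact sets $K$ in both cases, so the inequality passes to open sets.
\item For arbitrary $E$, the infimum in \eqref{eq-cap-E} for $\Om'$ runs over a larger family of open sets $G$ (those with $E\subset G\subset\Om'$). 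Together with the inequality already shown for each open $G\subset\Om$, this gives the claim.
\end{itemize}
In addition, $\csp(\,\cdot\,,\Om)$ is monotone in the first argument, directly from \eqref{eq-cap-G} and \eqref{eq-cap-E}.

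\emph{\textup{(a)}$\Rightarrow$\textup{(c)}.} Assume $sp\le n$ and choose $R>0$ with $B(x_0,R)\subset\Om$. For every $0<r\le R/2$, monotonicity in both arguments and Lemma~\ref{lem-cap} give
\[
\csp(\{x_0\},\Om)\le\csp(\{x_0\},B(x_0,R))\le\csp(B(x_0,r),B(x_0,R))\simle
\begin{cases} r^{n-sp}, & sp<n,\\ (\log(R/r))^{1-p}, & sp=n.\end{cases}
\]
Since $p>1$, both right-hand sides tend to $0$ as $r\to0$. Hence $\csp(\{x_0\},\Om)=0$.

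\emph{\textup{(c)}$\Rightarrow$\textup{(a)}.} I argue by contraposition. Suppose $sp>n$. Since $\Om$ is bounded, there is $R>0$ with $\Om\subset B(x_0,R)$. Monotonicity in the second argument and \eqref{eq-cap-sp>n} then give
\[
\csp(\{x_0\},\Om)\ge\csp(\{x_0\},B(x_0,R))\simeq R^{n-sp}>0.
\]
So \textup{(c)} fails.

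The only step needing any care is the monotonicity in the second argument through the three-stage definition of the capacity; it is elementary. Everything else is a direct application of Lemmas~\ref{lem-cp-Cp} and~\ref{lem-cap}.
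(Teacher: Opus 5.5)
Your proof is correct. It is essentially the argument the paper intends when it calls the lemma a direct consequence of \eqref{eq-cap-E} and Lemmas~\ref{lem-cp-Cp} and~\ref{lem-cap}: you get \textup{(b)}$\Leftrightarrow$\textup{(c)} from Lemma~\ref{lem-cp-Cp}, and \textup{(a)}$\Leftrightarrow$\textup{(c)} by comparing $\csp(\{x_0\},\Om)$ with condenser capacities relative to balls, via Lemma~\ref{lem-cap} and \eqref{eq-cap-sp>n}, plus an explicit and correct check of monotonicity in the second argument that the paper leaves implicit.
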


\begin{remark} \label{rmk-Bessel-cap}
There are several equivalent definitions of fractional Sobolev spaces 
and of the related capacities.
More precisely, in 
Triebel~\cite[Theorem p.~172, Theorem p.~189 and Remark~4 pp.~189--190]{Triebel95}
it is shown that our space $\Wsp(\R^n)$ coincides 
(with comparable norms)
with the 
Besov space $B^{p,p}_s(\R^n)$
(as defined in~\cite[Definition~4.1.1]{AH} or~\cite[(1), p.~172]{Triebel95}), 
which is also the same as the Triebel--Lizorkin space $F^{p,p}_{s}(\R^n)$.

By
Proposition~\ref{prop-Wsp-Csp-1},
the infimum
in Definition~\ref{deff-Csp} can equivalently be taken over functions in the
Schwarz class $\Schwarz$ (of rapidly decreasing $C^\infty$ functions).
It thus follows from 
Adams--Hedberg~\cite[Definition~4.4.2 and Propositions~4.4.3--4.4.4]{AH}
that
the Bessel potential capacity (denoted
by $C_{s,p}$ in \cite{AH}) is comparable to our Sobolev capacity $\Csp$
for compact sets.
Since the Bessel potential capacity is extended from compact
to arbitrary sets as in~\eqref{eq-cap-G}--\eqref{eq-cap-E}
(see \cite[Definition~2.2.6]{AH}),
the comparability holds for arbitrary sets.

The Bessel potential norm~\cite[Definition~2.2.6]{AH}
is \emph{not} comparable to our fractional Sobolev norm, 
so it is quite remarkable  (as they say in \cite[p.~107]{AH}) 
that these capacities are comparable.
Note also that properties like those in 
Propositions~\ref{prop-Choq-Ki}, \ref{prop-cspChoquet}, 
\ref{prop-strong-subadd}, \ref{prop-csp-subadd}
and Theorem~\ref{thm-cap-inner-reg}
are not necessarily preserved when changing to a comparable
capacity.
\end{remark}

\section{Proofs of Lemma~\ref{lem-Csp-min-cap-intro} and
Theorem~\ref{thm-equiv-Wiener-int}}
\label{sect-Wiener-criteria}

\emph{Recall the general assumptions from the beginning
of Section~\ref{sec-sobolev}.}

\medskip

The following lemma provides the key estimates for $sp<n$.

\begin{lem}   \label{lem-comp-cap}
Assume that $sp<n$ and $E\subset \clB$, where $B:=B(x_0,r)$. Then
\[
  \frac{\Csp(E)}{1+r^{sp}} \simle \csp(E,2B)
  \simle \csp(E,\R^n) \le  \Csp(E).
\]
In particular, all these  capacities have the same zero sets and
are comparable for small sets.
\end{lem}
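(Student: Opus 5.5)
We prove the chain
\[
\frac{\Csp(E)}{1+r^{sp}} \simle \csp(E,2B) \simle \csp(E,\R^n) \le \Csp(E)
\]
from right to left, working throughout with the admissible-function characterizations (Propositions~\ref{prop-csp=tcsp} and~\ref{prop-Wsp-Csp-1}). By \eqref{eq-cap-E} it suffices to treat open neighbourhoods, or compact sets via outer regularity. Note that $\csp(E,\R^n)$ is defined by the same formulas with $\Om=\R^n$; Definition~\ref{deff-cpt} and \eqref{eq-cap-G}--\eqref{eq-cap-E} make sense for unbounded $\Om$.

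\emph{The inequality $\csp(E,\R^n)\le \Csp(E)$.} For compact $K$ this is immediate, since the same test functions $u\in C_c^\infty(\R^n)$ with $u\ge1$ on $K$ are admissible for both capacities and $[u]^p\le \|u\|^p$. Passing through \eqref{eq-cap-G} and \eqref{eq-cap-E} then gives the inequality for arbitrary $E$.

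\emph{The inequality $\csp(E,2B)\simle \csp(E,\R^n)$.} This is the main step, and it is where $sp<n$ enters. Let $K\subset \clB$ be compact and let $u\in C_c^\infty(\R^n)$ with $u\ge1$ on $K$. After truncating we may assume $0\le u\le1$. Take a cutoff $\eta\in \Lipc(2B)$ with $\eta=1$ on $\tfrac32 B$, $0\le\eta\le1$ and Lipschitz constant $M\simeq 1/r$. Then $\eta u$ (after mollification) is admissible for $\csp(K',2B)$ for compacts $K'$ in a neighbourhood of $K$, and Lemma~\ref{lem-multiply-by-Lip} gives
\[
[\eta u]^p_{\Wsp(\R^n)} \simle r^{-sp}\|u\|^p_{L^p(2B)}+[u]^p_{\Wsp(\R^n)} .
\]
By Hölder's inequality and the fractional Sobolev inequality (valid since $sp<n$, with $p^*=np/(n-sp)$),
\[
\|u\|^p_{L^p(2B)} \simle r^{sp}\|u\|^p_{L^{p^*}(\R^n)} \simle r^{sp}[u]^p_{\Wsp(\R^n)} .
\]
Hence $[\eta u]^p \simle [u]^p$. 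Taking infima and then passing to general $E$ via outer regularity yields the inequality. The delicate point is the passage from compact to arbitrary sets: since $E\subset\clB$ only, I would first work with neighbourhoods $G\supset E$, $G\subset \tfrac32 B$, and use Propositions~\ref{prop-csp=tcsp} and~\ref{prop-Wsp-Csp-1} with open sets. This is compatible with the $\inf$ over open sets, because any open set containing $E$ can be intersected with $\tfrac32B$.

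\emph{The inequality $\Csp(E)\simle (1+r^{sp})\,\csp(E,2B)$.} Take $u\in\At(G,2B)$ for an open $G$ with $E\subset G\subset 2B$, using the same reduction to neighbourhoods. Then $u\in\Wsp(\R^n)$ with $u=0$ outside $2B$ and $u\ge1$ near $E$. The fractional Poincaré inequality for $\Vspo(2B)$, with its scaling, gives $\|u\|^p_{L^p}\simle r^{sp}[u]^p_{\Wsp(\R^n)}$. This is just the scaled version of the estimate used in Lemma~\ref{lem-cp-Cp}, and it also follows from the Sobolev–Hölder argument above. Therefore $\|u\|^p_{\Wsp(\R^n)}\simle (1+r^{sp})[u]^p$. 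Proposition~\ref{prop-Wsp-Csp-1} then gives $\Csp(E)\simle(1+r^{sp})\,\csp(G,2B)$, and we finish by taking the infimum over $G$.

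\emph{The final claims.} Equality of the zero sets follows because all the factors involved are finite and positive. Comparability for small sets means that for $r\le1$ the factor $1+r^{sp}\le2$.
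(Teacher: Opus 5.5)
Your proof is correct and has the same skeleton as the paper's. The third inequality is trivial. The first is the scaled Poincar\'e estimate: the paper gets it by citing Lemma~\ref{lem-cp-Cp} with $\Om=2B$, while you rederive it through Proposition~\ref{prop-csp=tcsp}. The middle inequality comes from multiplying an admissible function by a cutoff and applying Lemma~\ref{lem-multiply-by-Lip}, so everything reduces to absorbing $r^{-sp}\|u\|^p_{L^p(2B)}$ into $[u]^p_{\Wsp(\R^n)}$.

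At that step the two arguments use different inequalities:
\begin{itemize}
\item The paper uses the fractional Hardy inequality of Maz{\cprime}ya--Shaposhnikova \cite{MS02}, centred at $x_0$ and using $r^{-sp}\simle|x-x_0|^{-sp}$ on $2B$.
\item You use H\"older's inequality on $2B$ together with the fractional Sobolev embedding into $L^{p^*}$.
\end{itemize}
Both require exactly $sp<n$. Your input is the more standard one (e.g.\ \cite{DNPV12}) and involves no choice of centre. The Hardy inequality gives a stronger weighted bound, but that extra strength is not needed here.

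Your choice $\eta=1$ on $\tfrac32B$, rather than on $B$, also makes the reduction to compact sets honest. Compact subsets of open neighbourhoods of $E\subset\clB$ need not lie in $\clB$, and the paper's ``it is enough to show these inequalities for compact sets'' passes over this point.

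One small cleanup. Truncating $u$ is unnecessary, and it is what leads to your vague ``after mollification'' remark. With a smooth cutoff $\eta\in C_c^\infty(2B)$ and the untruncated $u$, the product $\eta u$ lies in $C_c^\infty(2B)$ and is directly admissible for $\csp(K,2B)$. If you prefer a Lipschitz $\eta$, then $(1+\eps)\eta u\in\Vspo(2B)$ is admissible in Proposition~\ref{prop-csp=tcsp}.
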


\begin{proof}
By definitions~\eqref{eq-cap-G} and~\eqref{eq-cap-E}
it is enough to show these inequalities for compact sets $E=K$.
The first inequality follows from Lemma~\ref{lem-cp-Cp},
while the third inequality is trivial, because the class of admissible functions
is the same (in Definitions~\ref{deff-cpt} and~\ref{deff-Csp})
and the infimized quantity is larger.

For the second inequality, 
let
$v\in C^\infty_c(\R^n)$
be admissible for $\csp(K,\R^n)$
and take a $2/r$-Lipschitz cut-off function $\eta\in C^\infty_c(2B)$
such that $0\le\eta\le1$ everywhere
and $\eta=1$ in $B$.
Because
$v\eta$ is admissible for $\csp(K,2B)$ and
vanishes outside $2B$, it follows from 
Lemma~\ref{lem-multiply-by-Lip} (with $M=2/r$)
that      
\begin{equation} 
\csp(K,2B) \le [v\eta]_{W^{s,p}(\R^n)}^p  
\simle r^{-sp} \int_{2B} |v(x)|^p \,dx + [v]_{W^{s,p}(\R^n)}^p.
\label{eq-est-cap-v}                   
\end{equation} 

Since $sp<n$, the fractional Hardy inequality (see 
Maz{\cprime}ya--Shaposhnikova~\cite[Theorem~2]{MS02})
gives
\begin{equation*}  
r^{-sp} \int_{2B} |v(x)|^p \,dx \simle \int_{2B} \frac{|v(x)|^p}{|x|^{sp}} \,dx 
\simle [v]_{W^{s,p}(\R^n)}^p.
\end{equation*}
Inserting this into~\eqref{eq-est-cap-v} and taking the infimum over all
admissible $v$ concludes the proof.
\end{proof}

The following result complements Lemma~\ref{lem-comp-cap}.

\begin{thm}   \label{thm-Csp-min-cap}
Assume that $sp=n$.  
Let $E\subset \clB$,  where $B=B(x_0,r)$ and $r\le1$.
Then
\[
\Csp(E) \simge \min\{ \csp(E,2B), \Csp(B) \}.
\]
\end{thm}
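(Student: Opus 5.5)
By the outer regularity \eqref{eq-cap-E} of both capacities it suffices to prove the estimate with $E$ replaced by an arbitrary open set $G\supset E$. I would take $u$ admissible for $\Csp(G)$ via Proposition~\ref{prop-Wsp-Csp-1}. By truncation we may assume $0\le u\le 1$ and $u=1$ in an open set containing $E$. Let $u_{2B}$ denote the mean value of $u$ over $2B$. The argument splits into two cases according to the size of this mean, and in each case I bound $\|u\|_{\Wsp(\Rn)}^p$ from below by one of the two quantities in the minimum.

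\emph{Case $u_{2B}\le \tfrac12$.} Set $v=2(u-u_{2B})_+$. Then $v\ge1$ in a neighbourhood of $E$ and $[v]_{\Wsp(\Rn)}\le 2[u]_{\Wsp(\Rn)}$. Let $\eta\in C_c^\infty(2B)$ be a $2/r$-Lipschitz cut-off with $\eta=1$ on $B$. Then $\eta v\in\Vspo(2B)$, since it is compactly supported in $2B$ (mollification as in Lemma~\ref{lem-Vspo-lattice}), and $\eta v\ge1$ near $E$. By Proposition~\ref{prop-csp=tcsp} and \eqref{eq-a-seminorm},
\[
\csp(E,2B)\le[\eta v]^p_{\Wsp(\Rn)}\simle r^{-sp}\int_{2B}|u-u_{2B}|^p\,dx+[u]^p_{\Wsp(\Rn)}.
\]
Here we used that $|v|\le2|u-u_{2B}|$ on $2B$. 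The standard fractional Poincar\'e inequality on balls gives $\int_{2B}|u-u_{2B}|^p\,dx\simle r^{sp}[u]^p_{\Wsp(2B)}$. Hence $\csp(E,2B)\simle\|u\|^p_{\Wsp(\Rn)}$ in this case.

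\emph{Case $u_{2B}>\tfrac12$.} Here I would use a dyadic chain of balls $B_k=2^{k+1}B$, $k=0,1,\dots,m$, where $m\simeq\log(2/r)$ is chosen so that $B_m$ has radius comparable to $1$. Note $m\ge0$ because $r\le1$; if $r$ is close to $1$, one uses $m=0$ and the case is trivial. By Poincar\'e on $B_{k+1}$,
\[
|u_{B_k}-u_{B_{k+1}}|\simle\biggl(\frac{1}{|B_{k+1}|}\int_{B_{k+1}}|u-u_{B_{k+1}}|^p\,dx\biggr)^{1/p}\simle[u]_{\Wsp(B_{k+1})},
\]
and this is exactly where $sp=n$ makes the scale factors cancel. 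Moreover $|u_{B_m}|\simle\|u\|_{L^p(\Rn)}$. Telescoping and applying H\"older's inequality to the sum over $k$, using that the $B_{k+1}\times B_{k+1}$ contributions to the seminorm are each at most $[u]^p_{\Wsp(\Rn)}$, gives
\[
\tfrac12\le u_{2B}\simle\|u\|_{L^p(\Rn)}+m^{1-1/p}[u]_{\Wsp(\Rn)}.
\]
Thus either $\|u\|^p_{\Wsp(\Rn)}\simge1$ or $\|u\|^p_{\Wsp(\Rn)}\simge m^{1-p}\simeq(\log(2/r))^{1-p}$.

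It then remains to check that $\Csp(B)\simle(\log(2/r))^{1-p}\le1$. By Lemma~\ref{lem-cp-Cp} with $\Om=B(x_0,2)$ we have $\Csp(B)\simle\csp(B,B(x_0,2))$. By Lemma~\ref{lem-cap} with $sp=n$, this is $\simeq(\log(2/r))^{1-p}$ for $r\le1$. So in the second case $\|u\|^p_{\Wsp(\Rn)}\simge\Csp(B)$. Combining the two cases and taking the infimum over $u$ proves the theorem.

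The main obstacle is the second case, and in particular the lack of a Hardy inequality when $sp=n$. The logarithmic chaining is my first attempt to replace it, and I need to ensure the H\"older step loses exactly the factor $m^{1-1/p}$. A minor point is that the Poincar\'e and chain estimates must be applied to $u$ on balls of $\Rn$, which is legitimate since $u\in\Wsp(\Rn)$.
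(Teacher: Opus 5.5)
\textbf{Gap in the summation step of your second case.} The balls $B_{k+1}=2^{k+2}B$ are nested, not disjoint. So the bound $[u]^p_{\Wsp(B_{k+1})}\le[u]^p_{\Wsp(\Rn)}$ for each $k$ only gives $\sum_{k=0}^{m-1}[u]^p_{\Wsp(B_{k+1})}\le m\,[u]^p_{\Wsp(\Rn)}$. H\"older then yields $\sum_k[u]_{\Wsp(B_{k+1})}\le m\,[u]_{\Wsp(\Rn)}$, not $m^{1-1/p}[u]_{\Wsp(\Rn)}$.

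This loss is genuine, not an artefact of the bookkeeping. Since $2B\subset B_{k+1}$ for every $k$, any $u$ with $[u]^p_{\Wsp(2B)}\ge c\,[u]^p_{\Wsp(\Rn)}$ has $\sum_k[u]_{\Wsp(B_{k+1})}\simge m\,[u]_{\Wsp(\Rn)}$. An example is a rescaled bump $\phi((\cdot-x_0)/\delta)$ with $\delta\to0$, whose seminorm is scale invariant because $sp=n$. With the factor $m$ you only obtain $\|u\|^p_{\Wsp(\Rn)}\simge\min\{1,m^{-p}\}$. This falls short of $\Csp(B)\simeq m^{1-p}$ by a factor $m$, so the theorem does not follow. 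The culprit is applying the Poincar\'e inequality on each $B_{k+1}$ before summing. That replaces the weight $\rho_{k+1}^{-2n}$ (where $\rho_k=2^{k+1}r$ is the radius of $B_k$) by the larger $|x-y|^{-2n}$, and so discards the decay in $k$.

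\textbf{Repair.} Keep the averaged form and sum first:
\[
|u_{B_k}-u_{B_{k+1}}|^p\le\frac{1}{|B_k|\,|B_{k+1}|}\int_{B_k}\int_{B_{k+1}}|u(x)-u(y)|^p\,dy\,dx \simeq\rho_k^{-2n}\int_{B_k}\int_{B_{k+1}}|u(x)-u(y)|^p\,dy\,dx .
\]
For fixed $(x,y)$, the indices with $x\in B_k$ and $y\in B_{k+1}$ form a tail $k\ge k_0$, and $|x-y|<3\rho_{k_0}$. Hence
\[
\sum_{k\ge k_0}\rho_k^{-2n}\simle\rho_{k_0}^{-2n}\simle|x-y|^{-2n}=|x-y|^{-n-sp}.
\]
It follows that $\sum_k|u_{B_k}-u_{B_{k+1}}|^p\simle[u]^p_{\Wsp(\Rn)}$, and H\"older now gives exactly $m^{1-1/p}[u]_{\Wsp(\Rn)}$. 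Alternatively, chain through the annuli $B_{k+1}\setm B_k$, whose products are pairwise disjoint.

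\textbf{Minor fix in your first case.} Take $\eta=1$ on a neighbourhood of $\clB$, e.g.\ on $\tfrac{3}{2}B$. Otherwise, since $E$ may meet $\bdy B$, you do not get $\eta v\ge1$ on an open set containing $E$.

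\textbf{Comparison with the paper.} With these repairs your proof is correct and genuinely different from the paper's. The paper passes to the Bessel capacity and uses the capacitary potential $V_K$, the Adams--Meyers Harnack inequality on the annulus $4B\setm\overline{2B}$, and Stein's comparison of the $L^{s,p}$ and $\Wsp$ norms (which requires $p\ge2$). It also needs a separate symmetrisation argument for $n=1$. Your dichotomy on the mean $u_{2B}$ needs only the Poincar\'e inequality and the scale invariance of the chain when $sp=n$, and it works uniformly in $n$ and $p$.
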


A key step in the proof below is the Harnack inequality for the 
Bessel nonlinear potential, 
due to Adams--Meyers~\cite[Theorem~6.1]{AM72}.
To be able to use it, 
we will need some results from
Adams--Hedberg~\cite{AH} on Bessel nonlinear potentials
and from Stein~\cite{Stein70}
relating the Bessel potential 
norm to our fractional Sobolev norm.

\begin{proof}
We can assume that $E=K$ is compact and that $x_0=0$.

First, assume that $n\ge2$ and thus 
$p >2$.
Remark~\ref{rmk-Bessel-cap}
implies that $\Csp$ is comparable to
the Bessel potential capacity 
in Adams--Hedberg~\cite[Definition~2.2.6]{AH}.
(See \cite[Section~1.2.6]{AH} for the definition of the 
Bessel potential norm.)
The Bessel potential capacity is denoted by $\Csp$ in~\cite{AH},
but to avoid confusion with our fractional Sobolev capacity $\Csp$,
we will here denote it by $\Bsp$.

By \cite[Theorem~2.2.7]{AH} and the remark after it, there is a 
capacitary measure $\mu$ supported in $K$ such that 
\[
\Bsp(K) = \mu(K) = \|V_K\|_{L^{s,p}(\Rn)}^{p},
\quad \text{where } 
V_K:= G_s* (G_s* \mu)^{1/(p-1)}. 
\]
Here $G_s$ and $L^{s,p}(\Rn)$ are the Bessel kernel and the Bessel potential space
as in~\cite[(1.2.11) and 
(1.2.29)]{AH}.
Moreover, by \cite[Remark p.~21]{AH}, the Bessel nonlinear potential $V_K$ 
satisfies
\begin{equation*} 
V_K=1\text{ in } K\setm Z \quad \text{for some set $Z$ with } \Csp(Z)=\Bsp(Z)=0.
\end{equation*}
Theorem~6.1 in  Adams--Meyers~\cite{AM72}
(or \cite[Lemma~9.8.1]{AH}) shows that for every 
$x\in A_r:= 4B\setm \overline{2B}$,
\[
\sup_{B(x,r/4)} V_K \simle V_K(x). 
\] 
Covering the annulus $A_r$ by at most $N$ such balls $B(x,\tfrac14r)$
(with $N$ only depending on the dimension $n$), together with the connectedness
of $A_r$ (since $n\ge2$), implies that
\begin{equation}   \label{eq-def-M-m}
M:= \sup_{A_r} V_K \simle \inf_{A_r} V_K =:m,
\end{equation}
with a comparison constant independent of $r$.
We shall now distinguish two cases.

1. If $M \ge\tfrac12$ then $m\simge 1$ by the Harnack property \eqref{eq-def-M-m}
and the function $V_K/m$  
is admissible for $\Bsp(B(x,r))$ as in \cite[Corollary~2.6.8]{AH},
for any ball with centre $x\in \bdy(3B)$.
Hence, 
\[ 
\Csp(B)  = \Csp(B(x,r)) \simeq \Bsp(B(x,r))
 \le \frac{\|V_K\|^p_{L^{s,p}(\R^n)}}{m^p}  
=\frac{\Bsp(K)}{m^p} \simle \Csp(K).
\] 

2. If $M<\tfrac12$, then let
\[
v = \begin{cases}
     2(V_K-M)_+ & \text{in } 4B, \\
        0 & \text{in } \R^n \setm 4B. 
   \end{cases}
\]
Note that $v=0$ outside $2B$.
The lower semicontinuity of $V_K$ 
(see \cite[Proposition~2.3.2]{AH}), together with the 
fact that $M < \tfrac12$,
implies that
$v \ge 1$
in an open neighbourhood 
of $K \setm Z$
and thus, by Proposition~\ref{prop-csp=tcsp},
\begin{equation*}
    \csp(K \setm Z,2B)  \le [v]^p_{\Wsp(\R^n)}.
\end{equation*}
It therefore follows from the subadditivity of the capacity
and Lemma~\ref{lem-cp-Cp} that
\begin{equation}  \label{eq-csp-le-wsp}
\csp(K, 2B) = \csp(K\setm Z, 2B) \le [v]^p_{\Wsp(\R^n)}. 
\end{equation}

Since $v$ vanishes outside $2B$, we have 
\[
[v]^p_{\Wsp(\R^n)} \le 2\int_{2B}\int_{\R^n} \frac{|v(x)-v(y)|^p}{|x-y|^{n+sp}} 
      \, dy\, dx.
\]
By splitting the inner 
integral into $3B$ and $\R^n\setm 3B$,
respectively, and
estimating the latter integral 
(with $v(y)=0$ for $y \in \Rn \setm 3B$) as 
\[
\int_{\R^n\setm 3B} \frac{|v(x)-v(y)|^p}{|x-y|^{n+sp}} \, dy
\simeq
r^{-sp}|v(x)|^p
\simeq
\int_{4B\setm 3B} \frac{|v(x)-v(y)|^p}{|x-y|^{n+sp}} \, dy,
\]
we obtain that
\[
[v]^p_{\Wsp(\R^n)} 
\simle [v]^p_{\Wsp(3B)} + \int_{2B}\int_{4B\setm 3B} \frac{|v(x)-v(y)|^p}{|x-y|^{n+sp}} 
      \, dy\, dx
\simle [v]^p_{\Wsp(4B)}.
\]

Noting that $v=2(V_K-M)_+$ in $4B$
and using \eqref{eq-csp-le-wsp}
then gives
\[
\csp(K, 2B) \simle 
[(V_K-M)_+]^p_{\Wsp(4B)}
\le\|V_K\|_{\Wsp(\R^n)}^{p}.
\]
Since $p\ge2$, Theorem~V.5(A) and its proof on pp.~155--157 in Stein~\cite{Stein70}
imply that the $\Wsp(\R^n)$-norm
is dominated by the 
Bessel potential $L^{s,p}(\R^n)$-norm \cite[Section~1.2.6]{AH}.
Thus
\begin{equation*} 
\csp(K, 2B) \simle \|V_K\|^p_{L^{s,p}(\R^n)} 
=\Bsp(K) \simeq \Csp(K),
\end{equation*}
which proves the statement for $n\ge2$.

Finally, consider the case $n=1$.
Let $u\in C_c^\infty(\R)$ be such that $u\ge1$ on~$K$.
If 
$m:= u_+(2 r) + u_+(-2 r) \ge \tfrac12$, then 
the continuous even function
\[
\ub(x) = \begin{cases}
   \displaystyle  \frac{u_+(x)+u_+(-x)}{m} & \text{in } \R \setm 2 B,  \\
           1 & \text{in } 2 B, 
   \end{cases}
\]
is admissible for $\Csp(B)$, 
as in Proposition~\ref{prop-Wsp-Csp-1}, and hence
\[  
\Csp(B) 
\le \|\ub\|^p_{\Wsp(\R)} 
\simle \|u\|^p_{\Wsp(\R)}.
\]   

On the other hand, if $m< \tfrac12$ then the continuous even function
\[
v(x) = \begin{cases}
  2 \bigl(u_+(x)+u_+(-x)-m \bigr)_+ & \text{in } 2 B, \\
        0 & \text{in } \R \setm 2 B,
   \end{cases}
\]
is admissible for  $\csp(K,2B)$, as in Proposition~\ref{prop-csp=tcsp},
and hence
\[
  \csp(K,2B) \le  
[v]^p_{\Wsp(\R)}  \le
\|u\|^p_{\Wsp(\R)}.
\]
Combining both cases and taking the infimum over 
all $u\in C_c^\infty(\R)$ admissible for $\Csp(K)$
concludes the proof also for $n=1$.
\end{proof}

\begin{remark}
The proof of Theorem~\ref{thm-Csp-min-cap} applies (with some 
simplifications) also to the capacities associated with the 
Sobolev space $W^{1,p}$, 
as in Mal\'y--Ziemer~\cite[Chapter~2]{MZ}.
Since $W^{1,p}$ coincides with the Bessel potential space $L^{1,p}$,
by Calder\'on~\cite[Theorem~7]{Calderon61} 
(or Adams--Hedberg~\cite[Theorem~1.2.3]{AH}),
our proof of Theorem~\ref{thm-Csp-min-cap} gives an alternative proof
of \cite[Lemma~2.11]{MZ}. 
\end{remark}

\begin{proof}[Proof of Lemma~\ref{lem-Csp-min-cap-intro}]
Let $B=B(x_0,r)$.
By Lemma~\ref{lem-cp-Cp}, $\Csp(E) \simle \csp(E,2B)$ (because $r \le 1$).
Using also that $E \subset B$ 
yields 
\[
\Csp(E) \simle \min\{ \csp(E,2B), \Csp(B) \}.
\]
The reverse inequality follows from 
Lemma~\ref{lem-comp-cap} when $sp<n$, from 
Theorem~\ref{thm-Csp-min-cap} when $sp=n$,
while 
\[
\Csp(B) \simle 1 \simle \Csp(E) 
\quad \text{if } sp>n \text{ and } E \ne \emptyset.
\]
(When $E=\emptyset$ the estimate in Lemma~\ref{lem-Csp-min-cap-intro}
is trivial.)
\end{proof}

\begin{proof}[Proof of Theorem~\ref{thm-equiv-Wiener-int}]
When $sp<n$, this follows directly from Lemma~\ref{lem-comp-cap} upon noting
that  $1\le 1+r^{sp} \le 2$ for $0<r<1$.

Assume therefore that $sp=n$.
The implication   
\eqref{eq-int-Csp=infty}$\imp$\eqref{eq-int-csp=infty}
follows
from the first inequality in Lemma~\ref{lem-cp-Cp}.
For the converse implication,
we will use Theorem~\ref{thm-Csp-min-cap}.
Write $B_r:=B(x_0,r)$.
If 
\begin{equation}   \label{eq-min=csp}
\csp(E_r,B_{2r}) \le 
\Csp(B_{r}) 
\end{equation}
for all sufficiently small $r>0$,  then also 
\[
\csp(E_r,B_{2r}) \simle \Csp(E_r), 
\]
by Theorem~\ref{thm-Csp-min-cap},
and \eqref{eq-int-csp=infty} clearly implies \eqref{eq-int-Csp=infty}.

If \eqref{eq-min=csp} fails for 
infinitely many radii tending to zero, 
then choose among them 
$r_j \le \tfrac12$, $j=1,2,\dots$\,, so that 
\(
\log(1/r_{j+1}) \ge 2\log(1/r_{j}),
\)
i.e.\ $r_{j+1}\le r_j^2$, and
\[
\csp(E_{r_j},B_{2r_j}) 
> \Csp(B_{r_j}).
\] 
It then follows from Theorem~\ref{thm-Csp-min-cap}
and Adams--Hedberg~\cite[Proposition~5.1.3]{AH} that
\[
\Csp(E_{r_j}) 
\simge \Csp(B_{r_j}) 
\simeq (\log(1/r_j))^{1-p}.
\] 
Therefore, as the sets $E_r$ are nested and $n=sp$, we have
\begin{align*}
  \int_0^1 \biggl(\frac{\Csp(E_r)}{r^{n-sp}}\biggr)^{1/(p-1)} \frac{dr}{r} 
   &\simge \sum_{j=1}^\infty 
\frac{1}{\log(1/r_{j+1})}
\int_{r_{j+1}}^{r_j} \frac{dr}{r} \\
   &= \sum_{j=1}^\infty \frac 
{\log(1/r_{j+1})-\log(1/r_{j})}
{\log(1/r_{j+1})} 
\ge \sum_{j=1}^\infty \frac12
= \infty.\qedhere
\end{align*}
\end{proof}

\section{Equivalent Wiener type criteria}\label{sect-Wiener-pizza}

\emph{Recall the general assumptions from the beginning
of Section~\ref{sec-sobolev}.}

\medskip

The following result provides another equivalent formulation of the Wiener criterion.

\begin{prop}  \label{prop-half-pizza-sp<n}
Let $E\subset\R^n$, $x_0 \in \Rn$ and $B_r=B(x_0,r)$.
If $sp< n$, then the Wiener condition \eqref{eq-int-csp=infty}
{\rm(}with $E_r=E \cap \clB_r$\/{\rm)}
holds if and only if
\begin{equation}   \label{eq-half-pizza}
  \int_0^1 \biggl(\frac{\csp(E\cap (\clB_r \setm \clB_{r/2}),2B_r)}{r^{n-sp}}
     \biggr)^{1/(p-1)}       \frac{dr}{r} =\infty.
\end{equation} 
\end{prop}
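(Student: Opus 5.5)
The implication \eqref{eq-half-pizza}$\imp$\eqref{eq-int-csp=infty} is immediate from monotonicity. Since $E\cap(\clB_r\setm\clB_{r/2})\subset E\cap\clB_r=E_r$, the integrand in \eqref{eq-half-pizza} is bounded by the one in \eqref{eq-int-csp=infty}. For the converse I will pass through the Sobolev capacity, which is comparable to $\csp(\cdot,2B_r)$ and to $\csp(\cdot,\Rn)$ when $sp<n$ by Lemma~\ref{lem-comp-cap}. The reason is that $\Csp$ does not depend on the ball, so it is countably subadditive across different scales without any change of reference condenser. By Lemma~\ref{lem-comp-cap}, with $1\le 1+r^{sp}\le 2$, the integral in \eqref{eq-int-csp=infty} is comparable to the same integral with $\Csp(E\cap\clB_r)$, and that in \eqref{eq-half-pizza} to the one with $\Csp(E\cap(\clB_r\setm\clB_{r/2}))$.

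The next step is to discretize both integrals dyadically. Put $r_k=2^{-k}$ and $A_k=E\cap(\clB_{r_k}\setm\clB_{r_{k+1}})$. By monotonicity in $r$ of $\Csp(E\cap\clB_r)$, the first integral is comparable to $\sum_k (r_k^{sp-n}\Csp(E\cap\clB_{r_k}))^{1/(p-1)}$. For the annular version, monotonicity in $r$ is lost. However, for $r\in[r_{k+1},r_k]$ the annulus $\clB_r\setm\clB_{r/2}$ is contained in $\clB_{r_k}\setm\clB_{r_{k+2}}$, so the integrand is at most $C(a_k+a_{k+1})$ in terms of $a_k:=(r_k^{sp-n}\Csp(A_k))^{1/(p-1)}$, using the subadditivity of $\Csp$ and $(x+y)^{1/(p-1)}\simle x^{1/(p-1)}+y^{1/(p-1)}$. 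Conversely, it will be enough to show that $\sum_k a_k=\infty$ implies \eqref{eq-half-pizza}, together with $\sum_k(r_k^{sp-n}\Csp(E\cap\clB_{r_k}))^{1/(p-1)}\simle\sum_k a_k$ up to a term coming from $\{x_0\}$, which has zero capacity by Lemma~\ref{lem-sp<=n}. For the first of these, $A_k$ is covered by the annuli with $r\in[r_k,2r_k]$ and $r\in[r_k/\sqrt2\,,\,\sqrt2 r_k]$ in a bounded number of pieces. A cleaner route is to cover $A_k\subset(\clB_{r}\setm\clB_{r/2})\cup(\clB_{r/2}\setm\clB_{r/4})$ for every $r\in[r_k,\frac43 r_k]$, which gives $a_k\simle$ the integrand averaged at scales $r$ and $r/2$.

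The main obstacle is the remaining estimate, bounding $\Csp(E\cap\clB_{r_k})$ by a sum of annular capacities. Countable subadditivity of $\Csp$ gives $\Csp(E\cap\clB_{r_k})\le\sum_{j\ge k}\Csp(A_j)$, but the exponent $1/(p-1)$ and the weights $r_k^{sp-n}$ must be handled. Write $\Csp(A_j)=r_j^{n-sp}a_j^{p-1}$. Then
\[
r_k^{sp-n}\Csp(E\cap\clB_{r_k})\le\sum_{j\ge k}2^{-(j-k)(n-sp)}a_j^{p-1},
\]
and the geometric factor is summable precisely because $sp<n$. If $p\le2$, so that $1/(p-1)\ge1$, I will use H\"older with the geometric weights, giving $(\sum_j 2^{-(j-k)\delta}a_j^{p-1})^{1/(p-1)}\simle\sum_j 2^{-(j-k)\delta'}a_j$ for some $0<\delta'<\delta:=n-sp$. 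If $p>2$, then $1/(p-1)<1$ and the concavity inequality $(\sum x_j)^{1/(p-1)}\le\sum x_j^{1/(p-1)}$ applies directly. In both cases, summing over $k$ and exchanging the order of summation gives $\sum_k(\cdots)\simle\sum_j a_j$. This finishes the converse implication, and by the dyadic comparisons above it yields the equivalence of \eqref{eq-int-csp=infty} and \eqref{eq-half-pizza}.
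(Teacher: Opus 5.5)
Your argument is correct, but it takes a different route from the paper. The paper never leaves the condenser capacity or the continuous integral. It splits $E\cap\clB_r$ into the annuli $E\cap(\clB_{2^{-i}r}\setm\clB_{2^{-i-1}r})$ using the subadditivity of $\csp(\cdot,2B_r)$. It then applies $(\sum a_i)^{1/(p-1)}\le\sum a_i^{1/(p-1)}$ for $p\ge2$, or Minkowski in $L^{1/(p-1)}(dr/r)$ for $1<p<2$. Each piece is bounded by the substitution $\rho=2^{-i}r$ together with $\csp(A,2B_r)\le\csp(A,2B_{2^{-i}r})$ for $A\subset\clB_{2^{-i}r}$, which gives $I_i\le 2^{-i(n-sp)/(p-1)}I_0$.

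The change of reference condenser that motivates your passage to $\Csp$ therefore goes in the favourable direction: enlarging the condenser only decreases the capacity, so it costs nothing. Your detour through Lemma~\ref{lem-comp-cap}, and hence through the fractional Hardy inequality, is not needed; in the paper $sp<n$ enters only via the geometric factor. The paper's substitution also avoids discretization, so the non-monotone annular integrand never has to be handled. You instead need the averaging over $r\in[r_k,\frac43 r_k]$. When you sum over $k$, you should state that these intervals, and their halves, are pairwise disjoint in $k$.

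What your version buys is a transparent discrete picture. The estimate $\sum_k b_k\simle\sum_j a_j$ is a discrete convolution with a summable geometric kernel, your H\"older step is the discrete counterpart of the paper's Minkowski step, and the point $\{x_0\}$ is handled explicitly rather than tacitly. On the other hand, the paper's scheme carries over to $sp=n$ (Proposition~\ref{prop-half-pizza-sp=n}, with $\rho=r^{2^i}$), while yours does not. There $\csp(\cdot,2B_r)$ and $\Csp$ are not uniformly comparable as $r\to0$.

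Finally, your bound of the annular integrand by $C(a_k+a_{k+1})$ only serves the trivial direction and can be dropped.
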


\begin{proof}
The 
implication \eqref{eq-half-pizza}\imp\eqref{eq-int-csp=infty}
follows from the monotonicity of the capacity.
For the converse 
implication,
assume first that $p\ge2$. 
Since $1/(p-1)\le1$, we have by the subadditivity of the capacity, 
together with the elementary
inequality $\bigl( \sum_{i=0}^\infty a_i \bigr)^\ga \le \sum_{i=0}^\infty a_i^\ga$
for $0<\ga\le1$ and $a_i\ge0$, that 
\begin{align*}   
I   &:=  \int_0^1 \biggl(\frac{\csp(E\cap \clB_r,2B_r)}{r^{n-sp}}
     \biggr)^{1/(p-1)}       \frac{dr}{r}  \nonumber  \\
 &\le  \int_0^1 \biggl( \sum_{i=0}^\infty 
      \frac{\csp(E\cap (\clB_{2^{-i}r} \setm \clB_{2^{-i-1}r}),2B_r)}{r^{n-sp}}
              \biggr)^{1/(p-1)}  \frac{dr}{r}     \\
 &\le \sum_{i=0}^\infty  \int_0^1 \biggl( 
      \frac{\csp(E\cap (\clB_{2^{-i}r} \setm \clB_{2^{-i-1}r}),2B_r)}{r^{n-sp}}
              \biggr)^{1/(p-1)}  \frac{dr}{r} =: \sum_{i=0}^\infty I_i.
\end{align*} 
The change of variable $\rho=2^{-i} r$ in the last integral, together with the fact that
$\csp(A,2B_r) \le \csp(A,2B_{2^{-i}r})$ for every $A\subset \clB_{2^{-i}r}$, shows that
\[  
I_i \le 
\int_0^{2^{-i}} \biggl(  \frac{\csp(E\cap (\clB_{\rho} \setm \clB_{\rho/2}),2B_\rho)}
     {(2^{i}\rho)^{n-sp}}              \biggr)^{1/(p-1)}  \frac{d\rho}{\rho} 
\le \frac{I_0}{2^\frac{i(n-sp)}{p-1}}.
\] 
Since $n-sp>0$, summing over 
$i=0,1,\dots$\,, and inserting this into the above estimate of $I$ gives
\[
I_0 \le I \simle I_0
\]
and proves the statement for $p\ge2$.

For $1<p<2$ (and thus $1/(p-1)>1$), we instead use the fact that $I^{p-1}$
can be seen as an $L^{1/(p-1)}$-norm with respect to $dr/r$
on $(0,1)$.
The Minkowski inequality and the same 
change of variables
as for $p\ge2$ then give first $I^{p-1} \le \sum_{i=0}^\infty I_i^{p-1}$
and then $I_0^{p-1} \le I^{p-1} \simle I_0^{p-1}$.
\end{proof}

\begin{remark} \label{rmk-Wiener-sum}
The following lemma 
implies that the Wiener criterion~\eqref{eq-int-csp=infty}
can equivalently be formulated
using Wiener sums and that several closely
related {formulations} of the Wiener integral are equivalent.
In particular, 
the integration can equivalently be
taken over $(0,\de)$ for any $\de>0$,
and
the closed balls in the
Wiener condition~\eqref{eq-half-pizza} 
may equivalently be replaced by the corresponding open balls.
\end{remark}

\begin{lem}   \label{lem-cap-with-tr}
Let $x_0 \in \Rn$ and $B_\rho=B(x_0,\rho)$.
Also let $E \subset \clB_r$ and $1<t_1<t_2$.
Then
\[
\csp(E,B_{t_2r}) \le \csp(E,B_{t_1r}) \simle \csp(E,B_{t_2r}),
\]
with comparison constant only depending on $t_1$, $t_2$, $s$, $p$ and $n$.
\end{lem}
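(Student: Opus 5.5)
The first inequality $\csp(E,B_{t_2r}) \le \csp(E,B_{t_1r})$ is just monotonicity in the second argument. Every $u\in C_c^\infty(B_{t_1r})$ that is admissible for a compact $K\subset B_{t_1r}$ is also admissible in $B_{t_2r}$, with the same seminorm $[u]_{\Wsp(\Rn)}$. This then passes to open and arbitrary sets through \eqref{eq-cap-G}--\eqref{eq-cap-E}. Note that $E\subset\clB_r\subset B_{t_1r}$, so both quantities are defined.

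For the second inequality, Proposition~\ref{prop-csp=tcsp} lets me work with $\At(E,B_{t_2r})$. It is also enough to treat open $G$ with $E\subset G\subset B_{\tau r}$, where $\tau=(1+t_1)/2$, since such $G$ are cofinal among the open neighbourhoods of $E$. Take $u\in\At(G,B_{t_2r})$ with $0\le u\le1$. Choose a cut-off $\eta$ with $\eta=1$ on $B_{\tau r}$, $\supp\eta\subset B_{t_1r}$, $0\le\eta\le1$ and Lipschitz constant $M\simeq 1/((t_1-1)r)$. By Lemma~\ref{lem-multiply-by-Lip}, $\eta u\in\Vspo(B_{t_1r})$, and $\eta u=1$ on $G$. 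Hence
\[
\csp(G,B_{t_1r})\le[\eta u]^p_{\Wsp(\Rn)}\simle r^{-sp}\int_{B_{t_2r}}|u|^p\,dx+[u]^p_{\Wsp(\Rn)}.
\]
I then need the scale-invariant Poincar\'e inequality $\int_{B_{t_2r}}|u|^p\,dx\simle r^{sp}[u]^p_{\Wsp(\Rn)}$ for $u$ vanishing outside $B_{t_2r}$. This follows by scaling from the fractional Poincar\'e inequality \cite[(5.6)]{BBK1} on the unit ball, and the constant depends only on $t_2,n,s,p$. Taking the infimum over $u$ and over $G$ then gives $\csp(E,B_{t_1r})\simle\csp(E,B_{t_2r})$.

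The point needing care is that the constant must not depend on $r$, and it must depend only on $t_1,t_2$. The Poincar\'e constant scales like $(t_2r)^{sp}$. Combined with $M^{sp}\simeq((t_1-1)r)^{-sp}$, this gives a factor $(t_2/(t_1-1))^{sp}$, which is independent of $r$. A direct proof of the Poincar\'e step is also available: for $x\in B_{t_2r}$, integrate over $y\in B_{2t_2r}\setm B_{t_2r}$, where $u(y)=0$. This gives
\[
|u(x)|^p\simle r^{sp}\int_{\Rn}\frac{|u(x)-u(y)|^p}{|x-y|^{n+sp}}\,dy,
\]
and integrating in $x$ yields the claim. I expect no real obstacle beyond this bookkeeping.
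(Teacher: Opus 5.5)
Your proof is correct, but it takes a different route from the paper. The paper first rescales to unit size, using $\csp(E,B_{tr})=r^{n-sp}\csp(\Et,B_t)$ with $\Et=\{x/r: x\in E\}$. It then applies Lemma~\ref{lem-cp-Cp} twice, passing through the Sobolev capacity: $\csp(\Et,B_{t_1})\simle\Csp(\Et)\simle\csp(\Et,B_{t_2})$. The constants there are controlled by $\dist(\Et,B_{t_1}^c)\ge t_1-1$ and $\diam B_{t_2}=2t_2$. The rescaling is needed because $\Csp$ is not scale-invariant, so Lemma~\ref{lem-cp-Cp} only gives $r$-independent constants at a fixed scale.

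You never introduce $\Csp$. Instead you transfer admissible functions directly from $B_{t_2r}$ to $B_{t_1r}$ with a cut-off (Lemma~\ref{lem-multiply-by-Lip}). You absorb the resulting $L^p$ term with the scale-invariant Poincar\'e bound $\int_{B_{t_2r}}|u|^p\,dx\simle (t_2r)^{sp}[u]^p_{\Wsp(\Rn)}$, which your annulus argument proves cleanly. In effect you inline the mechanism behind Lemma~\ref{lem-cp-Cp} and skip both the intermediate capacity and the scaling step.

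The paper's argument is shorter, given the tools already established. Yours is self-contained, works at scale $r$ directly, and gives an explicit constant of order $1+(t_2/(t_1-1))^{sp}$. The paper's route gives the somewhat cruder $(1+(t_1-1)^{-p})(1+(2t_2)^{sp})$.

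The details you rely on all check out: $\eta u\in\Vspo(B_{t_1r})$ via $\eta v_j\in\Lipc(B_{t_1r})$ and the convergence part of Lemma~\ref{lem-multiply-by-Lip}; $\eta u=1$ on the open set $G$, which suffices for admissibility; and the cofinality of open $G\subset B_{\tau r}$ for both capacities.
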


\begin{proof}
The first inequality is trivial.
For the second inequality, we may assume that $x_0=0$.
Let $\Et=\{x/r: x \in E\}$.
Then, by the change of variables $x \mapsto x/r$ and 
Lemma~\ref{lem-cp-Cp},
\begin{align*}
   \csp(E,B_{t_1r}) 
   &= r^{n-sp} \csp(\Et,B_{t_1})  
   \simle  r^{n-sp}\Csp(\Et) \\
   &\simle  r^{n-sp} \csp(\Et,B_{t_2}) 
   = \csp(E,B_{t_2r}),
\end{align*}
with comparison constants only depending on $t_1$, $t_2$, $s$, $p$ and $n$.
\end{proof}

For $sp=n$,
Proposition~\ref{prop-half-pizza-sp<n} needs to be modified in
the following way.

\begin{prop}  \label{prop-half-pizza-sp=n}
Let $E\subset\R^n$, $x_0 \in \Rn$ and $B_r=B(x_0,r)$.
If $sp= n$, then the Wiener condition \eqref{eq-int-csp=infty}
{\rm(}with $E_r=E \cap \clB_r$\/{\rm)}
holds if and only if
\begin{equation*} 
  \int_0^1 \csp\bigl(E\cap (\clB_r \setm \clB_{r^2}),2B_r\bigr)^{1/(p-1)}   \frac{dr}{r} =\infty.
\end{equation*} 
\end{prop}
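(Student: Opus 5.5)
The easy direction is from divergence of the new integral to \eqref{eq-int-csp=infty}. Since $sp=n$, the factor $r^{n-sp}$ is $1$. Monotonicity of $\csp(\,\cdot\,,2B_r)$ in the first argument gives $\csp(E\cap(\clB_r\setm\clB_{r^2}),2B_r)\le\csp(E\cap\clB_r,2B_r)$, so divergence of the new integral forces divergence of \eqref{eq-int-csp=infty}.

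For the converse I would mimic the proof of Proposition~\ref{prop-half-pizza-sp<n}, replacing dyadic annuli by ``logarithmically dyadic'' ones.
\begin{itemize}
\item Fix $0<r<1$ and set $\rho_i=r^{2^i}$, so $\rho_0=r$ and $\rho_{i+1}=\rho_i^2$.
\item Decompose
\[
E\cap\clB_r\subset\{x_0\}\cup\bigcup_{i\ge0}E\cap(\clB_{\rho_i}\setm\clB_{\rho_i^2}).
\]
\item The point $\{x_0\}$ has zero capacity by Lemma~\ref{lem-sp<=n}.
\item By countable subadditivity (Proposition~\ref{prop-csp-subadd}) and the trivial monotonicity in the second argument ($2B_{\rho_i}\subset 2B_r$),
\[
\csp(E\cap\clB_r,2B_r)\le\sum_{i\ge0}\csp\bigl(E\cap(\clB_{\rho_i}\setm\clB_{\rho_i^2}),2B_{\rho_i}\bigr)=:\sum_i a(\rho_i).
\]
\item Raise to the power $1/(p-1)$. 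For $p\ge2$ use $(\sum a_i)^{\gamma}\le\sum a_i^\gamma$; for $p<2$ use Minkowski in $L^{1/(p-1)}(dr/r)$, exactly as before.
\end{itemize}
This reduces the claim to estimating
\[
I_i=\int_0^1 a(r^{2^i})^{1/(p-1)}\,\frac{dr}{r}.
\]
Substituting $\rho=r^{2^i}$ gives $d\rho/\rho=2^i\,dr/r$, so
\[
I_i=2^{-i}\int_0^1 a(\rho)^{1/(p-1)}\frac{d\rho}{\rho}=2^{-i}I_0 .
\]
Summing over $i$ (directly, or via Minkowski when $p<2$) yields $I\le 2I_0$ or $I^{p-1}\le C I_0^{p-1}$. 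Hence divergence of $I$ forces divergence of $I_0$, which is the new integral.

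The step I expect to be the main obstacle is making the capacities match exactly under the substitution. The $i$-th term has condenser $2B_r$, whereas $a(\rho_i)$ uses $2B_{\rho_i}$. Because the comparison goes in the favourable direction (a smaller condenser gives a larger capacity), this is harmless, and no use of Lemma~\ref{lem-cap-with-tr} is needed.

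One must also check that the annuli actually cover $\clB_r\setm\{x_0\}$. They do: $\rho_i\to0$, and consecutive annuli share boundaries $\rho_{i+1}=\rho_i^2$.

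Finally, since $sp=n$ there is no decaying factor $2^{-i(n-sp)/(p-1)}$ as in Proposition~\ref{prop-half-pizza-sp<n}. The summable factor $2^{-i}$ must come entirely from the logarithmic change of variables, and this is precisely why annuli of the form $\clB_r\setm\clB_{r^2}$ are the right choice when $sp=n$.
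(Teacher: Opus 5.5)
Your proof is correct and is essentially the paper's argument. It uses the same logarithmically dyadic decomposition into $\clB_{r^{2^i}}\setm\clB_{r^{2^{i+1}}}$, subadditivity with the elementary power inequality or Minkowski, and monotonicity of $\csp$ in the condenser; the substitution $\rho=r^{2^i}$ then yields the factor $2^{-i}$. Your explicit handling of the point $\{x_0\}$ via Lemma~\ref{lem-sp<=n} is a detail the paper leaves implicit.
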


\begin{proof}
The proof is the same as for Proposition~\ref{prop-half-pizza-sp<n}, 
with $I_i$ replaced by
\[
I'_i := \int_0^1 
\csp\bigl(E\cap ( B_{r^{2^i}} \setm B_{r^{2^{i+1}}}),2B_r \bigr)^{1/(p-1)}  \frac{dr}{r} \\
\le 2^{-i} I'_0.
\qedhere
\]
\end{proof}

\begin{remark} \label{rmk-half-pizza-sp=n}
Without the change in the statement of Proposition~\ref{prop-half-pizza-sp=n},
the equivalence of the Wiener conditions~\eqref{eq-int-csp=infty}
and \eqref{eq-half-pizza} fails when $sp=n$.
This is seen by (essentially) the same counterexample as
in Bj\"orn--Bj\"orn--Manolis~\cite[Example~7.2]{BBManolis}
(which is for the local case with $p=n$).
More precisely, for
\[
E:= \{0\} \cup \bigcup_{j=1}^\infty   B(x_j,\al_{j+1}),
\quad \text{with } 
\al_j = e^{-2^j} \text{ and }  x_j=(\al_j,0,\dots,0),
\]
we have
\[
E \cap (B_r \setm B_{r/2}) \subset \begin{cases}
                  B(x_j,\al_{j+1}) 
& \text{for }  \tfrac34 \al_j \le r \le \tfrac52 \al_j, \\
                  \emptyset   
& \text{otherwise.}   \end{cases}
\]
Since  the estimate for $\csp(B_r,B_R)$ with $sp=n$ in Lemma~\ref{lem-cap} 
is exactly the same as for the local capacity $\capp_p(B_r,B_R)$ with $p=n$,
namely $(\log(r/R))^{1-p}$,
the estimates and calculations in  \cite[Example~7.2]{BBManolis} apply verbatim
and show that the integral in \eqref{eq-half-pizza} converges,
while the one in~\eqref{eq-int-csp=infty} diverges. 
\end{remark}

\section{Fine topology, thin sets and quasicontinuity}
\label{sect-fine-top}

\emph{Recall the general assumptions from the beginning
of Section~\ref{sec-sobolev}.}

\medskip

The fine topology is defined as follows.
It depends on $s$ and $p$, but we make the dependence implicit in the notation.

\begin{deff}\label{deff-thinness}
A set $E$ is  \emph{thin} at $x$ if
\begin{equation*} 
\int_0^1\biggl(\frac{\csp(E\cap B(x,r),B(x,2r))}{r^{n-sp}}\biggr)^{1/(p-1)}
     \frac{dr}{r}<\infty.
\end{equation*}

A set $V\subset \R^n$ is \emph{finely open} if
$\R^n \setminus V$ is thin at each point $x\in V$.
\end{deff}

It is easy to see that the finely open sets form
a topology, which is called the \emph{fine topology}.
Every open set is finely open, but the converse is not true if $sp \le n$.
For example, if $\Csp(E)=0$ then $\Om \setm E$ is finely open.

On the other hand, if $sp>n$ then every finely open set is open in the usual
topology.
To see this, note that if $x \in \clE$ and $r>0$, 
then there is $y \in E \cap B(x, r)$ and thus 
by \eqref{eq-cap-sp>n},
\begin{equation*} 
\csp(E \cap B(x,r),B(x,2r))
\geq \csp(\{y\},B(y,3r)) 
\simeq r^{n-sp}.
\end{equation*} 
Inserting this into the Wiener integral shows that
$E$ is not thin at $x$.

Thinness and the fine topology depend on $s$ and $p$ in the same
way as regularity does, as described in Theorem~\ref{thm-reg-inclusion-intro}
and Section~\ref{sect-different-sp}.
This was actually what Adams--Hedberg~\cite{AH84} showed, based on the 
Wiener integral~\eqref{eq-int-Csp=infty} with the Sobolev capacity.

A function 
$u : \Om \to \eR$
is 
\emph{finely continuous} if it is continuous when $\Om$ 
is equipped with the
fine topology and $\eR$ with the usual topology.
(Recall that $\Om$ is always open (with respect to the Euclidean topology)
in this paper.)
Again, if $sp>n$ then every finely continuous function is 
$\eR$-valued continuous 
with respect to the usual topology.   

A closely related concept is quasicontinuity.

\begin{deff} \label{deff-qcont}
A function $u : \Om\to \eR$ is 
\emph{quasicontinuous} in $\Om$
if for every $\eps>0$ there is an open set $G$
with $\Csp(G)<\eps$ such that $u|_{\Om \setm G}$ is continuous (and finite-valued).
\end{deff}

Note that every quasicontinuous function is continuous when $sp>n$,
by Lemma \ref{lem-sp<=n}.
Since our Sobolev capacity $\Csp$ is comparable to the Bessel potential
capacity in 
Adams--Hedberg~\cite{AH} (see Remark~\ref{rmk-Bessel-cap}),
the notion of quasicontinuity is also the same.
Moreover, it follows from Theorem~\ref{thm-equiv-Wiener-int}
that the thinness and fine topology
defined above are 
the same as the ones considered
in Adams--Hedberg~\cite[(1)]{AH84}, \cite[Definition~6.3.7]{AH}.
In particular, the so-called Kellogg and Choquet  properties
for the fine topology follow directly from 
\cite[Corollary~6.3.17 and Theorem~6.3.18]{AH}.
Also the following characterization 
is a consequence of results in \cite{AH}.
A property holds \emph{quasieverywhere} (q.e.)\
if the set of points  for which it fails
has Sobolev capacity zero.

\begin{thm} \label{thm-qcont<=>finecont}
\textup{(\cite[Theorems~6.4.5 and~6.4.6]{AH})}
A function $u:\Om \to \eR$ is quasicontinuous
if and only if it is finely continuous q.e.\ and finite q.e.
\end{thm}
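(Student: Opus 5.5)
The statement is cited from Adams--Hedberg, so the plan is to reduce it to their setting rather than reprove it from scratch. Three identifications are needed. First, by Remark~\ref{rmk-Bessel-cap}, our Sobolev capacity $\Csp$ is comparable to the Bessel potential capacity $\Bsp$ for arbitrary sets. Hence quasicontinuity in the sense of Definition~\ref{deff-qcont} coincides with quasicontinuity with respect to $\Bsp$, and "q.e." means the same for both capacities. Second, by Theorem~\ref{thm-equiv-Wiener-int} applied with the nested sets $E_r=E\cap B(x,r)$, thinness in Definition~\ref{deff-thinness} agrees with thinness defined through the Wiener integral \eqref{eq-int-Csp=infty} with $\Csp$ when $sp\le n$. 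Replacing $\Csp$ by the comparable $\Bsp$ changes the integrand only by a bounded factor, so this is exactly the thinness in \cite[Definition~6.3.7]{AH}. Third, combining these, the fine topologies agree, and so does fine continuity. With these identifications, the case $sp\le n$ becomes literally \cite[Theorems~6.4.5 and~6.4.6]{AH}.

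The direction "quasicontinuous $\Rightarrow$ finely continuous q.e." can be sketched as follows. Choose open sets $G_j$ with $\Csp(G_j)<2^{-jp}$ such that $u|_{\Om\setm G_j}$ is continuous. The key ingredient is the standard lemma that for any set $G$ one can find $E\supset G$ with $\Csp(E)\simle\Csp(G)$ and $G$ thin at every point outside $E$, obtained via capacitary potentials and the Kellogg property. This yields a capacity-zero set off which $u$ is finely continuous, using that complements of sets thin at $x$ are fine neighbourhoods of $x$.

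For the converse, "finely continuous q.e. and finite q.e. $\Rightarrow$ quasicontinuous", the standard approach uses the quasi-Lindel\"of property together with the fact that finely open sets are quasiopen. This is the deep part of \cite{AH}, and I would cite it rather than redo it.

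The case $sp>n$ must be handled separately, since Theorem~\ref{thm-equiv-Wiener-int} does not apply there. Here Lemma~\ref{lem-sp<=n} shows that only the empty set has zero capacity. Moreover, every finite-capacity open set $G$ with $\Csp(G)<\eps$ small contains no point, because $\Csp(\{x\})\simeq 1$; so $G=\emptyset$ for $\eps$ small. Thus quasicontinuity means continuity and finiteness everywhere. On the other side, as shown above, finely open sets are open when $sp>n$, so fine continuity at every point is ordinary continuity, and "q.e." means everywhere.

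The main obstacle is the identification of thinness in the case $sp=n$, which rests on Theorem~\ref{thm-Csp-min-cap}. One must make sure that the nested family $E\cap B(x,r)$ fits Theorem~\ref{thm-equiv-Wiener-int}; the open versus closed balls issue is harmless by Remark~\ref{rmk-Wiener-sum}. One must also check that the Adams--Hedberg definition, which is stated with $\Bsp$ and possibly over dyadic sums, matches our definition up to the comparability constants.
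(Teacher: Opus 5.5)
Your proposal is correct and follows the same route as the paper, which gives no independent proof: it identifies $\Csp$ with the Bessel capacity via Remark~\ref{rmk-Bessel-cap} and the thinness of Definition~\ref{deff-thinness} with that of \cite[Definition~6.3.7]{AH} via Theorem~\ref{thm-equiv-Wiener-int}, and then cites \cite[Theorems~6.4.5 and~6.4.6]{AH}. Your separate treatment of $sp>n$, where both sides reduce to ordinary continuity and finiteness everywhere, matches the remarks the paper makes just before the theorem.
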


It is well known that Sobolev functions have ``better'' than 
a.e.-representatives.
We can
now apply results from Adams--Hedberg~\cite{AH} (and earlier papers) to obtain
the following result.

\begin{prop} \label{prop-Wsp-better-repr}
If $u \in \Wsploc(\Om)$, 
then there is a quasicontinuous ``representative'' $v$ such that
$v=u$ a.e.\ in $\Om$.
Moreover, $v$ is finely continuous q.e.\ in $\Om$ and finite q.e.\ in~$\Om$.
\end{prop}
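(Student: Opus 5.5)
The plan is to reduce the statement to known results for Bessel potential spaces in Adams--Hedberg~\cite{AH}, using the identifications made in Remark~\ref{rmk-Bessel-cap}. The equivalence goes through because quasicontinuity and capacity-zero sets are invariant under comparable capacities.

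First we localize. Cover $\Om$ by countably many balls $B_i$ with $2B_i \Subset \Om$, and take Lipschitz cut-offs $\eta_i\in\Lipc(2B_i)$ with $\eta_i=1$ on $B_i$. Since $u\in\Wsp(2B_i)$, the product $\eta_i u$ extends by zero to a function in $\Wsp(\Rn)$. This step needs a variant of Lemma~\ref{lem-multiply-by-Lip} in which $u$ is only known on $2B_i$. The tail term $\int_{2B_i}|\eta_i u(x)|^p\int_{(2B_i)^c}|x-y|^{-n-sp}\,dy\,dx$ is finite because $\eta_i$ vanishes near $\bdry(2B_i)$: for $x$ in $\supp \eta_i$ the inner integral is bounded by $\dist(\supp\eta_i,\bdry(2B_i))^{-sp}$.

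Next, $\Wsp(\Rn)=B^{p,p}_s(\Rn)$ with comparable norms (Triebel). For Besov functions, Adams--Hedberg~\cite[Theorem~6.2.1 / Proposition~6.1.2 and Section~6.2]{AH} give a representative $v_i$ that is quasicontinuous with respect to the capacity $C_{s,p}$ of \cite{AH}. One such route is to take $v_i=\lim$ of averages, which exists outside a set of capacity zero by the strong-type capacitary inequality. By Remark~\ref{rmk-Bessel-cap}, this capacity is comparable to our $\Csp$ for arbitrary sets. Hence $v_i$ is quasicontinuous in our sense, and $v_i=\eta_iu=u$ a.e.\ in $B_i$.

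To glue the pieces, note that two quasicontinuous functions that agree a.e.\ on an open set agree q.e.\ there. This is a standard fact, cf.\ \cite[Theorem~6.1.4]{AH}, or it follows from the fine-topology characterization together with the Kellogg property. So $v_i=v_j$ q.e.\ on $B_i\cap B_j$. Define $v=v_i$ on $B_i\setm\bigcup_{j<i}B_j$. Then $v$ differs from each $v_i$ on $B_i$ only on a set of capacity zero. Given $\eps>0$, choose open sets $G_i$ with $\Csp(G_i)<2^{-i}\eps$ such that $v_i|_{B_i\setm G_i}$ is continuous and finite, and such that $G_i$ contains the exceptional sets where $v\ne v_i$. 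This uses outer regularity \eqref{eq-cap-E}. With $G=\bigcup_i G_i$, countable subadditivity of $\Csp$ gives $\Csp(G)<\eps$. Moreover $v|_{\Om\setm G}$ is locally equal to some continuous $v_i|_{B_i\setm G}$, hence continuous. So $v$ is quasicontinuous in $\Om$.

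Finally, Theorem~\ref{thm-qcont<=>finecont} shows that $v$ is finely continuous q.e.\ and finite q.e.\ in $\Om$. The main obstacle is the gluing step, namely that quasicontinuous a.e.-equal functions agree q.e. If no citation is available, I would prove it directly: if $w$ is quasicontinuous with $w=0$ a.e.\ in an open set $U$, then at each point $x\in U$ where $w$ is finely continuous, the set $\{|w|>\eps\}$ is a fine neighbourhood of $x$. A fine neighbourhood has positive Lebesgue measure near its point, since it is non-thin and a null set has zero Lebesgue density. This contradicts $w=0$ a.e., so $w=0$ q.e.\ in $U$.
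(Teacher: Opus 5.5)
Your proof is correct and is essentially the paper's proof: you localize by a cutoff (the paper's Lemma~\ref{lem-cutoff}, which you reprove directly), take quasicontinuous representatives from \cite{AH} via Remark~\ref{rmk-Bessel-cap}, glue them using \cite[Theorem~6.1.4]{AH} (Proposition~\ref{prop-Kilp-qcont}) together with outer regularity and countable subadditivity, and finish with Theorem~\ref{thm-qcont<=>finecont}. The only flaw is in your optional fallback proof of the gluing lemma. Non-thin sets can be Lebesgue null (e.g.\ pieces of hyperplanes through $x$ when $sp>1$), so non-thinness alone does not give positive measure. The correct reason is that the complement of the fine neighbourhood is thin at $x$, whereas a set containing almost all of every small ball $B(x,r)$ has, by Lemma~\ref{lem-Kilp-cond}, the same capacity as these balls and hence is not thin. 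This is exactly the idea behind the paper's elementary proof of Proposition~\ref{prop-Kilp-qcont}.
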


This is well known for 
$u \in \Wsp(\Rn)$,
see Adams--Hedberg~\cite[Propositions~4.4.1, 6.3.10,  Theorems~6.4.5, 6.4.6]{AH} 
(and Remark~\ref{rmk-Bessel-cap})
for $sp \le n$ and 
Triebel~\cite[(5), p.~203]{Triebel95}
for  $sp>n$.
For general $\Om$ it then follows by localization.
Since $\Wsp$ is a nonlocal space
and the following
Proposition~\ref{prop-Kilp-qcont} is needed in the proof, 
we will provide the details for the reader's convenience
at the end of this section.

\begin{prop} \label{prop-Kilp-qcont}
\textup{(Adams--Hedberg~\cite[Theorem~6.1.4]{AH})}
If $u_1$ and $u_2$ are quasicontinuous in $\Om$ and $u_1=u_2$ a.e.\ in $\Om$,
then $u_1=u_2$ q.e.\ in $\Om$.
\end{prop}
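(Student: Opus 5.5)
The plan is a direct capacitary argument. It uses only Definition~\ref{deff-qcont} and the characterization of $\Csp$ in Proposition~\ref{prop-Wsp-Csp-1}, not the fine topology. The idea is that, off a small open set, $\{u_1\ne u_2\}$ is relatively open and Lebesgue null. Adding a null set to an open set, provided the union is open, does not increase the Sobolev capacity, because admissible functions may be modified on null sets without changing their norm.

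\emph{Step 1 (choice of $G$).} Fix $\eps>0$. By Definition~\ref{deff-qcont}, choose open sets $G_1,G_2$ with $\Csp(G_i)<\eps/2$ such that $u_i|_{\Om\setm G_i}$ is continuous and finite-valued. Let $G=G_1\cup G_2$. By subadditivity of $\Csp$ (the analogue of Proposition~\ref{prop-csp-subadd}), $\Csp(G)<\eps$. Both $u_1$ and $u_2$ are real-valued and continuous on $\Om\setm G$.

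\emph{Step 2 (the exceptional set).} Set $E=\{x\in\Om\setm G: u_1(x)\ne u_2(x)\}$. Since $u_1-u_2$ is real-valued and continuous on $\Om\setm G$, we have $E=U\cap(\Om\setm G)$ for some open $U\subset\Om$. Hence $V:=E\cup G=U\cup G$ is open. Moreover $|E|=0$ by assumption. Also
\[
\{x\in\Om:u_1(x)\ne u_2(x)\}\subset V.
\]

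\emph{Step 3 (key step: $\Csp(V)\le\Csp(G)$).} By Proposition~\ref{prop-Wsp-Csp-1}, take $u\in\WspRn$ with $u\ge1$ on an open set $W\supset G$ and $\|u\|_{\Wsp(\Rn)}^p<\Csp(G)+\eps$. Define $\ut=\max\{u,1\}$ on $W\cup V$ and $\ut=u$ elsewhere. The two functions differ only on $(W\cup V)\setm W\subset E$, which is a null set. Therefore $\ut=u$ a.e., so $\ut\in\WspRn$ with the same norm. Also $\ut\ge1$ on the open set $W\cup V\supset V$. Proposition~\ref{prop-Wsp-Csp-1} then gives $\Csp(V)\le\Csp(G)+\eps<2\eps$.

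\emph{Step 4 (conclusion).} By monotonicity, $\Csp(\{x\in\Om:u_1(x)\ne u_2(x)\})<2\eps$. Since $\eps>0$ was arbitrary, the capacity is zero, i.e.\ $u_1=u_2$ q.e.\ in $\Om$.

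The step needing care is Step 3. It is essential that Proposition~\ref{prop-Wsp-Csp-1} allows arbitrary $\WspRn$ functions, rather than only smooth ones, that are $\ge1$ on an open set. Only then is redefining $u$ on the null set $E$ legitimate and norm-preserving.
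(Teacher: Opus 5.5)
Your proposal is correct and is essentially the paper's proof (Kilpel\"ainen's elementary argument). The paper isolates your Step~3 as Lemma~\ref{lem-Kilp-cond}, namely $\Csp(G)=\Csp(G\setm E)$ for open $G$ and Lebesgue null $E$, proved by the same null-set modification $\max\{u,\chi_E\}$ of an admissible function from Proposition~\ref{prop-Wsp-Csp-1}; it then applies that lemma to the open set $U\cup G$ and the null set $U\setm G$, exactly matching your choice of $G$, relative openness of $\{u_1\ne u_2\}$ in $\Om\setm G$, and final monotonicity step.
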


It follows from~\eqref{eq-cap-E},
and the subadditivity of the capacity, that  changing 
a quasicontinuous function in a set of zero capacity 
preserves quasicontinuity.   
Proposition~\ref{prop-Kilp-qcont}
is a kind of converse of this fact.
We seize the opportunity to present   
a more elementary proof,
due to
Kilpel\"ainen~\cite{kilp98}.
Here $|\cdot|$ denotes the Lebesgue measure.

\begin{lem} \label{lem-Kilp-cond}
If $G \subset \Rn$ is open and $|E|=0$, then
\[
      \Csp(G)=\Csp(G \setm E).
\]
\end{lem}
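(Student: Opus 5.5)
The inequality $\Csp(G\setm E)\le\Csp(G)$ follows from monotonicity, so the plan concerns only $\Csp(G)\le\Csp(G\setm E)$. We may assume $\Csp(G\setm E)<\infty$. By Proposition~\ref{prop-Wsp-Csp-1}, it suffices to show the following: if $u\in\Wsp(\Rn)$ and $u\ge1$ on an open set $U\supset G\setm E$, then $\|u\|_{\Wsp(\Rn)}^p\ge\Csp(G)$. Since $\Csp(G)$ is a supremum over compact $K\subset G$, it is in turn enough to produce, for each such $K$, an admissible function for $K$ whose norm is at most $\|u\|_{\Wsp(\Rn)}^p+\eps$.

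The key observation is that $u\ge1$ a.e.\ in $G$, because $G\setm U\subset E$ has measure zero. Therefore $v:=\min\{u_+,1\}$ satisfies $v=1$ a.e.\ in $G$, and $\|v\|_{\Wsp(\Rn)}\le\|u\|_{\Wsp(\Rn)}$ since truncation decreases both the $L^p$ norm and the seminorm. Since $\Wsp(\Rn)$ consists of a.e.-classes in the norm, we may redefine $v$ on the null set $G\cap\{v\ne1\}$ without changing the norm. The redefined function, still called $v$, satisfies $v=1$ everywhere in the open set $G$, and in particular on $K$.

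Proposition~\ref{prop-Wsp-Csp-1} applied to the compact set $K$ with the open neighbourhood $G$ then gives
\[
\Csp(K)\le\|v\|^p_{\Wsp(\Rn)}\le\|u\|^p_{\Wsp(\Rn)}.
\]
We first take the infimum over admissible $u$, which yields $\Csp(K)\le\Csp(G\setm E)$. We then take the supremum over compact $K\subset G$ and use \eqref{eq-cap-G} to conclude.

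I expect the main subtlety to be justifying that changing $v$ on a null set is harmless. This requires that Proposition~\ref{prop-Wsp-Csp-1} allows arbitrary (pointwise) representatives in $\Wsp(\Rn)$ that are $\ge1$ everywhere on an open set; the proposition is stated that way, so the step is legitimate. If one wanted to avoid this, an alternative is to mollify $v$: mollification preserves $v=1$ on compact subsets of $G$ at a positive distance from $\bdy G$, and it converges in $\Wsp(\Rn)$, which yields smooth admissible functions directly from Definition~\ref{deff-Csp}.
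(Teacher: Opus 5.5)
Your proof is correct and is essentially the paper's argument. The paper also truncates to $0\le u\le 1$, replaces $u$ by $v:=\max\{u,\chi_E\}$ (which agrees with $u$ a.e.\ and equals $1$ everywhere in $G$), and applies Proposition~\ref{prop-Wsp-Csp-1}; the only difference is that your detour through compact $K\subset G$ is unnecessary, since Proposition~\ref{prop-Wsp-Csp-1} applied to $G$ itself, with $G$ as the open neighbourhood, gives $\Csp(G)\le\|v\|^p_{\Wsp(\Rn)}$ directly.
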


\begin{proof}
Let $\eps>0$.
By Proposition~\ref{prop-Wsp-Csp-1} and truncation,
there is $u \in \Wsp(\Rn)$ such that
$0 \le u \le 1$ everywhere, $u=1$ 
in  an open set containing $G \setm E$
and $\|u\|^p_{\Wsp(\Rn)} < \Cpt(G \setm E) + \eps$.
Then $v:=\max\{u,\chi_E\}=u$ a.e., and thus $v \in \Wsp(\Rn)$.
Since $v=1$ in $G$ and $G$ is open,
it again follows from  Proposition~\ref{prop-Wsp-Csp-1} that 
\[
 \Cpt(G) \le \|v\|^p_{\Wsp(\Rn)} = \|u\|^p_{\Wsp(\Rn)}  <  \Cpt(G \setm E) + \eps.
\]
Letting $\eps \to 0$ shows that  
$\Cpt(G) \le   \Cpt(G \setm E)$.
The reverse
inequality follows from monotonicity.
\end{proof}

\begin{proof}[Proof of Proposition~\ref{prop-Kilp-qcont}]
Let $\eps >0$.
As $u$ and $v$ are quasicontinuous  in $\Om$,
we can find an open set $G \subset \Om$ with $\Csp(G)< \eps$
such that
$u|_{\Om \setm G}$ and $v|_{\Om \setm G}$ are continuous.
Thus the set $\{x \in \Om \setm G : u(x) \ne v(x)\}$ is open 
in the relative topology on $\Om \setm G$,
i.e.\ there is an open set $U \subset \Om$
such that 
\[
   U \setm G = \{x \in \Om \setm G : u(x) \ne v(x)\}.
\]
Since $U \cup G$ is open and $|U \setm G|=0$, 
Lemma~\ref{lem-Kilp-cond} with $G$ and $E$ replaced by $U\cup G$ and
$U\setm G$ shows that 
\[
    \Csp(\{x \in \Om : u(x) \ne v(x)\})
    \le \Csp(U \cup G) = \Csp(G) < \eps.
\]
Letting $\eps \to 0$ completes the proof.
\end{proof}

\begin{lem}  \label{lem-cutoff}
\textup{(Kim--Lee~\cite[Lemma~2.8]{KL-rem})}
If $u \in \Wsploc(\Om)$ and 
$\eta \in C_c^\infty(\Om)$,
then $v:= u\eta \in \Wsp(\Rn)$,
where  $v$ is extended by $0$ on $\Omc$.
\end{lem}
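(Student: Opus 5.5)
The goal is to show that $v=u\eta$, extended by zero outside $\Om$, satisfies $\|v\|_{L^p(\Rn)}<\infty$ and $[v]_{\Wsp(\Rn)}<\infty$. The plan is to split the double integral according to whether the points lie near $\supp\eta$. Let $K:=\supp\eta\Subset\Om$ and $\de:=\dist(K,\Omc)>0$ (with $\de=1$ if $\Om=\Rn$). Choose an open set $G$ with
\[
K\subset G\Subset\Om, \qquad \dist(K,\Rn\setm G)\ge \de/2 .
\]
Since $u\in\Wsploc(\Om)$, we have $u\in\Wsp(G)$. In particular $u\in L^p(G)$, so $\|v\|_{L^p(\Rn)}\le\|\eta\|_\infty\|u\|_{L^p(G)}<\infty$.

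For the seminorm, $v$ vanishes outside $K$, so by symmetry
\[
[v]^p_{\Wsp(\Rn)}\le 2\int_G\int_G\frac{|v(x)-v(y)|^p}{|x-y|^{n+sp}}\,dy\,dx
+2\int_K\int_{\Rn\setm G}\frac{|v(x)|^p}{|x-y|^{n+sp}}\,dy\,dx .
\]
For the second term, $|x-y|\ge\de/2$ whenever $x\in K$ and $y\notin G$. The inner integral is therefore at most $C\de^{-sp}$, and the whole term is bounded by $C\de^{-sp}\|\eta\|_\infty^p\|u\|^p_{L^p(G)}<\infty$.

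For the first term, work inside $G$ and use the splitting
\[
|v(x)-v(y)|\le|\eta(x)|\,|u(x)-u(y)|+|u(y)|\,|\eta(x)-\eta(y)| .
\]
The first piece contributes at most $\|\eta\|_\infty^p[u]^p_{\Wsp(G)}$. For the second piece, $\eta$ is $M$-Lipschitz and bounded, so exactly as in the proof of Lemma~\ref{lem-multiply-by-Lip}
\[
\int_{\Rn}\frac{|\eta(x)-\eta(y)|^p}{|x-y|^{n+sp}}\,dx\simle M^{sp}+\|\eta\|_\infty^p M^{sp}
\]
uniformly in $y$ (after scaling). Integrating against $|u(y)|^p$ over $y\in G$ bounds this piece by $C\|u\|^p_{L^p(G)}$.

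Adding the pieces gives $v\in\Wsp(\Rn)$. There is no genuine obstacle here. The only point needing care is the nonlocal interaction between $K$ and the complement of $G$, which includes $\Omc$ where $u$ itself need not be integrable. This is exactly why we extend $v$ by $0$ there and use the positive gap $\dist(K,\Rn\setm G)$ rather than any information about $u$ outside $G$.
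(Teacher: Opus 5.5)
Your proof is correct. It is more self-contained than the paper's, which quotes Kim--Lee~\cite[Lemma~2.8]{KL-rem} for $v\in\Wsp(\Om)$ and then only remarks that, since $\supp v\Subset\Om$, passing from $\Om$ to $\Rn$ is easy. That last step is exactly your distance-gap estimate, with $\Omc$ in place of $\Rn\setm G$.

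By inserting the intermediate set $G$ with $K\subset G\Subset\Om$ and $\dist(K,\Rn\setm G)\ge\de/2$, you handle both steps at once:
\begin{enumerate}
\item the product-rule splitting on $G\times G$ needs only $u\in\Wsp(G)$, which is exactly what $u\in\Wsploc(\Om)$ provides;
\item the gap argument disposes of all interaction between $K$ and $\Rn\setm G$. This includes the region $\Om\setm G$, where $u$ has no integrability at all, and that is precisely the part the paper outsources to the citation.
\end{enumerate}
You also correctly carry the factor $\|\eta\|_\infty$, since Lemma~\ref{lem-multiply-by-Lip} is stated only for $0\le\eta\le1$.

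The paper's route is shorter on the page. Yours costs a few lines but needs nothing beyond the definitions and the elementary kernel estimate from the proof of Lemma~\ref{lem-multiply-by-Lip}.
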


\begin{proof}
In \cite[Lemma~2.8]{KL-rem} it was shown that $v \in \Wsp(\Om)$.
Since $\supp v \Subset \Om$, it is easily seen
that $v \in \Wsp(\Rn)$.
\end{proof}

\begin{proof}[Proof of Proposition~\ref{prop-Wsp-better-repr}]
Let $\eps>0$ and
cover $\Om$ by countably many balls $B^j  \Subset \Om$ 
such that  $\Om =\bigcup_{j=1}^\infty B^j$.
For each $j=1,2,\dots$\,, 
it follows from Lemma~\ref{lem-cutoff}
that there is a function $u_j \in \Wsp(\Rn)$
such that $u_j=u$ in $B^j$.

By Adams--Hedberg~\cite[Propositions~6.3.10 and 4.4.1]{AH} and
Remark~\ref{rmk-Bessel-cap},
there is  a quasicontinuous function $v_j=u_j$ a.e.\ in $\Rn$.
(Note that ${\cal G}_\al$ in the first printing of~\cite[pp.~167--168]{AH} 
should be ${\cal H}_\al$,
see the corrected second 
printing of \cite{AH} from 1999.)
In particular, there is  an open  set $G_j$ with $\Csp(G_j)<2^{-j-1} \eps$
such that $v_j|_{B^j \setm G_j}$ is continuous.
Let 
\[ 
v(x)=v_{j_x}(x), \ x \in \Om, 
\quad\text{where }j_x=\min\{j : x \in B^j\}.
\] 
Note that  $v_j=v_i$ a.e.\ in $B^j \cap B^i$, and since 
both functions are
quasicontinuous, it follows from 
Proposition~\ref{prop-Kilp-qcont} that $v_j=v_i$ q.e.\ 
in $B^j \cap B^i$.
Hence the set
\[
E_j:=\{x \in B^j: v_j(x)\ne v(x)\}
  \subset \bigcup_{i=1}^{j-1} \{x \in B^j: v_j(x)\ne v_{i}(x)\}
\]
has capacity zero, by the subadditivity of the capacity.
By \eqref{eq-cap-E} there is an open set 
$G_j' \supset E_j$ such that 
$\Csp(G_j')<2^{-j-1} \eps$.
Let $G=\bigcup_{j=1}^\infty (G_j \cup G_j')$. 
Then $G$ is open 
 and $\Csp(G)<\eps$, by the countable subadditivity of the capacity.
Moreover, $v|_{\Om \setm G}$ is continuous. 
Thus $v$ is quasicontinuous in $\Om$.
Since $u=v$ a.e.\ in $\Om$, this concludes the proof
of the first part.

The last part now follows directly from
Theorem~\ref{thm-qcont<=>finecont}.
\end{proof}

\section{\texorpdfstring{$\LL$}{L}-superharmonic functions and polar sets}
\label{sect-polar}

\emph{Recall the general assumptions from the beginning
of Section~\ref{sec-sobolev}.}

\medskip

\begin{deff} \label{def:superharmonic}
A measurable function $u: \R^n \to \eR$ is \emph{$\LL$-superharmonic}  
in $\Omega$ if it satisfies the following properties:
\begin{enumerate}
\item \label{s-a}
$u < \infty$ almost everywhere in $\R^{n}$ and $u>-\infty$ everywhere in $\Omega$,
\item
$u$ is lower semicontinuous  in $\Omega$,
\item
for each open set $G \Subset \Omega$ and each solution $v \in C(\clG)$ of $\mathcal{L}v=0$ 
in $G$ satisfying $v_+ \in L^{\infty}(\R^{n})$ and $v \le u$ on  $\Gc$,
it holds that $v \le u$ in $G$,
\item \label{s-d}
$u_- \in L^{p-1}_{sp}(\R^{n})$.
\end{enumerate}
\end{deff}

It follows directly from the definition that $\min\{u,l\}$
is $\LL$-superharmonic in $\Om$ whenever $u$ is $\LL$-superharmonic in $\Om$
and $l \in \R$.

The following result from Korvenp\"a\"a--Kuusi--Palatucci~\cite{KKP17}
explains the close connection between supersolutions and $\LL$-superharmonic functions.
Here 
the \emph{lsc-regularization} of $u$ in $\Om$ is
\begin{equation*}  
\uhat(x) :=   \begin{cases}
   u(x), & \text{if } x \in \Omc, \\
\displaystyle\essliminf_{y\to x} u(y), & \text{if } x \in \Om,
\end{cases}
\end{equation*}
and $u$ is \emph{lsc-regularized} (in $\Om$) if $u \equiv \uhat$.

\begin{thm} 
\label{thm:KKP17}
\textup{(\cite[Theorems~1, 9 and~12]{KKP17})}
If $u$ is $\LL$-superharmonic  in $\Omega$, then $u$ is lsc-regularized in $\Om$.
If, in addition, $u$ is locally bounded in $\Omega$ or $u \in W^{s, p}_{\mathrm{loc}}(\Omega)$, 
then it is a  supersolution in $\Omega$.

Conversely, if $u$ is a supersolution in $\Om$, then $\uhat=u$ a.e.\
and $\uhat$ is $\LL$-superharmonic in $\Om$.
\end{thm}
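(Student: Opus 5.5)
The plan is to treat the three assertions in the order: first the converse (supersolution $\Rightarrow$ $\uhat$ is $\LL$-superharmonic), then lsc-regularization of $\LL$-superharmonic functions, and last that bounded or $\Wsploc$ superharmonic functions are supersolutions. The first part supplies the local estimates the others need.

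\emph{Converse.} Let $u$ be a supersolution in $\Om$. The main tool is the weak Harnack inequality with tail for supersolutions, from the De Giorgi--Nash--Moser theory of Di Castro--Kuusi--Palatucci. It gives local boundedness from below, and, applied to $u-\essliminf_{B(x,r)}u$, it gives
\[
\uhat(x)=\lim_{r\to0}\operatorname*{ess\,inf}_{B(x,r)}u=\lim_{r\to0}\frac{1}{|B(x,r)|}\int_{B(x,r)}u\,dy
\]
at every Lebesgue point $x$ of $u$. Hence $\uhat=u$ a.e. By construction, $\uhat$ is lower semicontinuous and $>-\infty$ in $\Om$. Moreover, $u_-\in L^{p-1}_{sp}(\Rn)$ since $u\in L^{p-1}_{sp}(\Rn)$. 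For the comparison property, let $v\in C(\clG)$ be a solution in $G\Subset\Om$ with $v_+$ bounded and $v\le\uhat$ on $\Gc$. I would first reduce to $v\le u$ a.e. on $\Gc$ and $(v-u)_+\in\Vspo(G)$; this needs a small approximation, using $\min\{u,l\}$ and a slightly smaller $v-\eps$ near $\bdy G$, where continuity of $v$ and lower semicontinuity of $\uhat$ on the compact $\bdy G$ help. Then testing $\EE(u,\cdot)-\EE(v,\cdot)$ with $(v-u)_+$ gives $v\le u$ a.e. in $G$. Continuity of $v$ then upgrades this to $v\le\uhat$ everywhere in $G$.

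\emph{Lsc-regularization.} Lower semicontinuity gives $u(x)\le\liminf_{y\to x}u(y)\le\uhat(x)$. For the reverse inequality, fix $x\in\Om$ and $\la<\uhat(x)$. Choose $r$ with $u>\la$ a.e. in $B(x,2r)$, and replace $u$ by the superharmonic truncation $\min\{u,\la\}$. Since $u$ is lsc and bounded below near $\clB(x,2r)$, there are continuous functions $\psi_k$, with uniformly controlled tails, increasing to $\min\{u,\la\}$. I would let $v_k=H_{B(x,r)}\psi_k$ (Theorem~\ref{thm-ex-Hg}), after making $\psi_k$ compactly supported with bounded positive part. The comparison axiom gives $v_k\le u$ in $B(x,r)$. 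The aim is then to show $\liminf_k v_k(x)\ge\la-o(1)$ as $r\to0$, using that $\psi_k\to\la$ a.e. in $B(x,2r)\setm B(x,r)$ and interior estimates for $v_k$ with tail. The point is that the exterior data equal $\la$ off a null set, and null sets do not affect solutions, so $v_k$ is essentially the constant $\la$ near $x$ up to a tail error of order $(r/R)^{sp/(p-1)}$.

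\emph{Supersolution property.} Assume first that $u$ is locally bounded. In a ball $B\Subset\Om$, approximate $u$ from below by continuous $\psi_k\nearrow u$ as above. Solve the obstacle problems in $B$ with obstacle and exterior data $\psi_k$, obtaining continuous supersolutions $w_k$. Comparison with $u$ on the open set $\{w_k>\psi_k\}$, where $w_k$ is a solution, gives $\psi_k\le w_k\le u$, so $w_k\nearrow u$ a.e. The Caccioppoli estimate with tail then gives a uniform bound on $[w_k]_{\Wsp}$ in slightly smaller balls. Hence $u\in\Wsploc$, and $w_k\to u$ weakly and in $L^p\loc$. Finally I would pass to the limit in $\EE(w_k,\phi)\ge0$, using monotone convergence together with the tail bounds to handle the nonlocal part. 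For $u\in\Wsploc(\Om)$, I would apply the bounded case to $\min\{u,l\}$ and let $l\to\infty$, using $[\min\{u,l\}]\le[u]$ and dominated convergence in $\EE$.

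I expect the main obstacle to be the limit passage in the nonlinear form $\EE(w_k,\phi)$. Weak convergence alone does not suffice for the $(p-1)$-power nonlinearity, so one needs a.e. convergence of the difference quotients plus equi-integrability, or a Minty-type monotonicity argument, and both must be combined with the nonlocal tail terms.
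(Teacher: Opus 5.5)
The paper gives no proof of this theorem; it quotes \cite[Theorems~1, 9 and~12]{KKP17}. Your first and third parts follow the usual route in that setting: the weak Harnack inequality at Lebesgue points plus testing with $(v-\eps-u)_+$, and then obstacle approximants squeezed between $\psi_k$ and $u$, Caccioppoli bounds and a limit passage.

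The genuine gap is in the lsc-regularization step. There you assert $\liminf_k v_k(x)\ge\la-o(1)$ for $v_k=H_{B(x,r)}\psi_k$, and the tools you name cannot give it. A priori you only know that $Z:=\{y\in B(x,2r):u(y)<\la\}$ is Lebesgue null. Nothing prevents $Z$ from containing $x$ and meeting $\bdy B(x,\rho)$ for every $\rho$. For instance, when $n\ge2$, take $u=\la-1$ on a closed segment issuing from $x$ and $u=\la+1$ elsewhere near $x$. This $u$ is lsc, and only the theorem you are proving rules it out. Moreover, $Z$ may have positive capacity.

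The comparison axiom needs $v_k\le u$ \emph{everywhere} on the complement, and $v_k\in C(\clB(x,r))$ attains its data on the sphere. Hence $v_k(z)=\psi_k(z)\le u(z)<\la$ at every $z\in Z\cap\bdy B(x,r)$, for every $k$. So $\la-v_k$ is not small near the boundary, uniformly in $k$. Interior estimates, such as local boundedness for the subsolution $\la-v_k$, only transfer smallness of $\la-v_k$ on a neighbourhood of $x$ (in $L^p$ and in the tail) to $x$ itself. They cannot produce that smallness from the exterior data. What you would need is an up-to-the-boundary, harmonic-measure-type statement: Lebesgue-null sets of low data do not influence $\lim_k v_k(x)$. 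This also does not follow from $\Vsp$-stability, because $\psi_k$ cannot converge to $\la$ in $\Wsp$ near $Z$ when $\Csp(Z)>0$. ``Null sets do not affect solutions'' is true for Sobolev solutions, but $\lim_k v_k$ is not known to be one.

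The missing idea is the one you already use in the last part, and it should come first: approximate by \emph{obstacle} solutions. These are supersolutions on a fixed ball and are squeezed from below. Choose $R$ with $u>\la$ a.e.\ in $B(x,R)$, and let $\psi_k\nearrow\min\{u,\la\}$ be continuous near $\clB(x,R)$. Let $w_k$ be the continuous obstacle solutions there. The comparison axiom on $\{w_k>\psi_k\}$ gives $\psi_k\le w_k\le\min\{u,\la\}$.

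Now $\la-w_k$ is a nonnegative subsolution in all of $B(x,R)$, tends to $0$ a.e.\ there, and is bounded by $\la-\psi_1$. In the local boundedness estimate on $B(x,\rho)$, the $L^p$ term therefore tends to $0$ as $k\to\infty$, while the tail term is $\simle(\rho/R)^{sp/(p-1)}$ times a fixed tail. Since $w_k(x)\to\min\{u(x),\la\}$, letting $\rho\to0$ gives $u(x)\ge\la$.

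Two smaller points.
\begin{enumerate}
\item You cannot make $\psi_k$ compactly supported while keeping $\psi_k\le u$ off $G$, since outside $\Om$ the function $u$ may be very negative and is not lsc. The exterior data must be taken below $u$ far away. That requires existence for data in $W^{s,p}(\Om')\cap L^{p-1}_{sp}(\Rn)$ rather than Theorem~\ref{thm-ex-Hg}.
\item In the $\Wsploc$ case, dominated convergence in the nonlocal part of $\EE$ presupposes $u_+\in L^{p-1}_{sp}(\Rn)$, which Definition~\ref{def:superharmonic} does not provide. Split that part into its positive part, which is dominated using $u_-$, and its negative part, which is handled by Fatou. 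This also yields $u\in L^{p-1}_{sp}(\Rn)$, as Definition~\ref{def-supersol} requires.
\end{enumerate}
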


The following lemma and its corollary
will be useful for studying local properties of $\LL$-superharmonic functions
and for proving Theorem~\ref{thm-superh-finecont}.

\begin{lem}   \label{lem-comp-u-ub}
Assume that $0\le u\le l$ in $B:=B(x_0,r)$ 
and $u_- \in L^{p-1}_{sp}(\Rn)$.
Then there is $M>0$ such that the function
\[
\ub := \begin{cases}   u & \text{in } 2B, \\
         -M & \text{in } \R^n \setm 2B, \end{cases}
\]
satisfies
\begin{equation}   \label{eq-comp-u-ub}
I(u,x) \le I(\ub,x)\quad \text{for }x \in B,
\end{equation}
where
\begin{equation}   \label{eq-comp-u-ub-I}
I(u,x) := \int_{(2B)^c} |u(x)-u(y)|^{p-2} (u(x)-u(y)) k(x, y) \,dy.
\end{equation}
\end{lem}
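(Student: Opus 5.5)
Set $\ub(y)=-M$ for $y\notin 2B$, so that $\ub(x)=u(x)\in[0,l]$ for $x\in B$. The plan is to compare the integrands of $I(u,x)$ and $I(\ub,x)$ pointwise on $\{y\in(2B)^c: u(y)\ge -M\}$ and to control the remaining part by an integral estimate, choosing $M$ large enough in terms of $l$, $r$ and the tail of $u_-$.

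Write $\phi(t)=|t|^{p-2}t$, which is increasing. Then
\[
I(\ub,x)-I(u,x)=\int_{(2B)^c}\bigl(\phi(u(x)+M)-\phi(u(x)-u(y))\bigr)k(x,y)\,dy.
\]
Wherever $u(y)\ge -M$ we have $u(x)-u(y)\le u(x)+M$, so the integrand is nonnegative. The contribution where $u(y)<-M$ is negative, but its size is at most
\[
\int_{\{u<-M\}\setminus 2B}\phi(l+u_-(y))\,k(x,y)\,dy \simle \int_{\{u_->M\}\setminus 2B}\frac{u_-(y)^{p-1}}{|x-y|^{n+sp}}\,dy,
\]
because $l+u_-\le 2u_-$ once $u_->M\ge l$.

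To absorb this negative contribution we use the positive part. For $y\in(2B)^c$ with $u(y)\ge -M$ and $u(y)\le 0$, which holds in particular for $u(y)\le -M/2$, we obtain a gain of at least $\phi(u(x)+M)-\phi(u(x)+M/2)\simge M^{p-1}$, since $u(x)\ge0$. However, such $y$ may not exist: $u$ could be positive on most of $(2B)^c$. In that case the gain has to come from elsewhere. For $u(y)\ge 0$ the integrand $\phi(u(x)+M)-\phi(u(x)-u(y))$ is at least $\phi(M)-\phi(l)\simge M^{p-1}$ when $M\ge 2l$. So in every case, on the set $\{y\in(2B)^c:u(y)\ge -M\}$ the integrand is bounded below by $c\,M^{p-1}k(x,y)$, using only the facts that $u(x)\in[0,l]$, $u(y)\ge -M$ and $M\ge 2l$.

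Let $A=\{u\ge -M\}\setminus 2B$. For $x\in B$ and $y\notin 2B$ we have $|x-y|\simeq |y-x_0|$, so
\[
\int_{A\cap (4B\setminus 2B)}M^{p-1}k\,dy \simge M^{p-1}r^{-sp}\,\frac{|A\cap(4B\setminus2B)|}{r^n}.
\]
Since $u_-\in L^{p-1}_{sp}$, and in particular $u_-\in L^{p-1}\loc$, the measure of $\{u_->M\}\cap 4B$ tends to $0$ as $M\to\infty$, so for large $M$ the set $A$ fills at least half of the annulus. The positive part is therefore $\simge M^{p-1}r^{-sp}$. The negative part is $\simle \int_{\{u_->M\}\setminus 2B}u_-^{p-1}|y-x_0|^{-n-sp}\,dy$, which tends to $0$ as $M\to\infty$ by dominated convergence, using the tail integrability together with $(1+|y|)\simeq_r |y-x_0|$ outside $2B$. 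Both bounds are uniform in $x\in B$, so choosing $M$ large yields \eqref{eq-comp-u-ub} for all $x\in B$.

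The main obstacle is the case distinction for the sign of $\phi$ in the regions where $u(y)$ lies between $-M$ and $l$. I expect the monotonicity of $\phi$ together with $M\ge 2l$ to handle this uniformly. Care is also needed where $u=\infty$ on a null set, which does not affect the integrals.
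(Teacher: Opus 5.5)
The proposal has a false step, though it can be repaired. You claim that on all of $\{y\in(2B)^c: u(y)\ge -M\}$ the integrand $\phi(u(x)+M)-\phi(u(x)-u(y))$ is at least $cM^{p-1}k(x,y)$. This fails: where $u(y)=-M$ the integrand vanishes identically, and for $-M\le u(y)<-M/2$ it can be arbitrarily small. The sentence before it also has the inequality reversed. If $u(y)\le -M/2$, then $u(x)-u(y)\ge u(x)+M/2$, so the gain is at most, not at least, $\phi(u(x)+M)-\phi(u(x)+M/2)$.

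What is true is this. For $u(y)\ge -M/2$ and $M\ge 2l$, the gain is at least $\phi(u(x)+M)-\phi(u(x)+M/2)\simeq M^{p-1}$, by the mean value theorem, since both arguments lie in $[M/2,3M/2]$. On $\{-M\le u<-M/2\}$ the gain is merely nonnegative. So take $A:=\{u\ge -M/2\}\setm 2B$ and use $|\{u_->M/2\}\cap 4B|\to 0$. With that change, your annulus estimate and your dominated-convergence bound on $\{u<-M\}$ go through.

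The paper avoids the pointwise comparison altogether. The integrand of $I(\bar u,x)$ is $(u(x)+M)^{p-1}k(x,y)\ge M^{p-1}k(x,y)$ on all of $(2B)^c$, whatever $u$ does there. Hence $I(\bar u,x)\ge \La^{-1}M^{p-1}\int_{B(x,3r)^c}|x-y|^{-n-sp}\,dy\simeq M^{p-1}r^{-sp}$. On the other side, $|u(x)-u(y)|^{p-2}(u(x)-u(y))\le (l+u_-(y))^{p-1}$, so $I(u,x)\le C\int_{(2B)^c}(l+u_-(y))^{p-1}(1+|y|)^{-n-sp}\,dy<\infty$. Here $C$ depends on $r$ and $x_0$ but not on $x\in B$. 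Taking $M$ large makes the lower bound exceed this fixed finite number.

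Comparing the two integrals as totals, rather than integrand by integrand, removes several steps of your argument: the sign case analysis, the measure estimate on $4B$, and the limit $M\to\infty$ in the negative part. It also handles transparently the possibility $I(u,x)=-\infty$, which can occur since only $u_-$ is assumed to lie in the tail space.
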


\begin{proof}
Since $0\le u \le l$ in $B$ and $|x-y| \ge r$, the left-hand side in~\eqref{eq-comp-u-ub} 
can be for $x\in B$ and $y\in (2B)^c$ estimated as follows
\begin{align*}
I(u,x) &\le \La  \int_{(2B)^c} (l + u_-(y))^{p-1}  \frac{dy}{|x-y|^{n+sp}} \\
 &\simle \int_{(2B)^c} (l + u_-(y))^{p-1}  \frac{dy}{(1+|y|)^{n+sp}} =:A, 
\end{align*}
where the comparison constant depends on $r$ but not on $x$.
Note that $u_- \in L^{p-1}_{sp}(\R^n)$ implies that $A<\infty$.
On the other hand, for the right-hand side in~\eqref{eq-comp-u-ub} we have
\begin{align*}
I(\ub,x) &= \int_{(2B)^c} (u(x)+M)^{p-1} k(x, y) \,dy \\
   &\ge M^{p-1} \La^{-1}
\int_{B(x,3r)^c} \frac{dy}{|x-y|^{n+sp}} \ge I(u,x),
\end{align*}
when $M$ is sufficiently large.
Note that $M$ depends on $r$ but not on $x$.
\end{proof}

\begin{cor}   \label{cor-ub-supersol}
Let $u$ be a supersolution in a ball $B$ such that $0\le u\le l$ in $B$.
Let $\ub$ and $M$ be as in Lemma~\ref{lem-comp-u-ub}.
Then $\ub$ is 
a  supersolution
in $B$. 
\end{cor}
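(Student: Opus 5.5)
The approach is to verify the weak supersolution inequality for $\ub$ directly, by comparing $\EE(\ub,\phi)$ with $\EE(u,\phi)$ for nonnegative test functions $\phi \in C_c^\infty(B)$. First, $\ub$ belongs to the right function class. We have $\ub = u$ in $2B$, so $\ub \in \Wsploc(B)$ because $u$ does. Moreover $\ub$ is bounded outside $2B$ and equals $u$ on the bounded set $2B$, where $u \in L^{p-1}_{sp}(\Rn)$ is locally $L^{p-1}$. Hence $\ub \in L^{p-1}_{sp}(\Rn)$, and $\EE(\ub,\phi)$ is well defined.

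Next, fix $0 \le \phi \in C_c^\infty(B)$. We split $\Rn\times\Rn$ according to whether $x,y$ lie in $2B$. On $2B \times 2B$ the integrands of $\EE(u,\phi)$ and $\EE(\ub,\phi)$ coincide, since $\ub=u$ there. On $(2B)^c\times(2B)^c$ both integrands vanish, because $\phi(x)=\phi(y)=0$. The mixed regions $2B\times(2B)^c$ and $(2B)^c\times 2B$ contribute equally by symmetry of $k$. Only $x\in\supp\phi\subset B$ matters there, since $\phi(y)=0$ for $y \notin 2B$. Together these give
\[
\EE(\ub,\phi)-\EE(u,\phi) = 2\int_B \bigl(I(\ub,x)-I(u,x)\bigr)\phi(x)\,dx,
\]
with $I$ as in~\eqref{eq-comp-u-ub-I}.

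By Lemma~\ref{lem-comp-u-ub}, $I(u,x)\le I(\ub,x)$ for every $x\in B$, and $\phi\ge0$. Hence $\EE(\ub,\phi)\ge\EE(u,\phi)\ge0$, which is exactly the supersolution property of $\ub$ in $B$.

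The only step needing care is justifying this splitting, since $\EE$ is defined as a double integral that must converge absolutely on each piece. On $2B\times 2B$ finiteness follows as in the standard argument that $\EE(u,\phi)$ is well defined for $u\in\Wsploc\cap L^{p-1}_{sp}$. Away from $\supp\phi$ the kernel is bounded, and on the mixed pieces the bound $A<\infty$ from the proof of Lemma~\ref{lem-comp-u-ub} gives integrability, uniformly in $x\in B$. In the lemma, $u$ is assumed to satisfy $u_-\in L^{p-1}_{sp}(\Rn)$, which holds since $u\in L^{p-1}_{sp}(\Rn)$ as a supersolution.
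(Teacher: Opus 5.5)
Your proposal is correct and follows the paper's proof essentially verbatim. You split $\EE(\ub,\phi)$ and $\EE(u,\phi)$ over $2B\times 2B$ and the mixed regions, use the symmetry of $k$ to write the mixed contributions as $2\int_B I(\cdot,x)\phi(x)\,dx$, and conclude $0\le\EE(u,\phi)\le\EE(\ub,\phi)$ from Lemma~\ref{lem-comp-u-ub}. Your remarks on absolute convergence, which the paper leaves implicit, are fine, with one small caveat: $A$ only controls the positive part of the integrand of $I(u,x)$, so absolute integrability of the mixed term actually uses the full assumption $u\in L^{p-1}_{sp}(\Rn)$, which you do have.
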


\begin{proof}
Clearly, $\ub \in W^{s, p}_{\mathrm{loc}}(B) \cap L^{p-1}_{sp}(\R^n)$.
We shall show that $\EE(\ub,\phi) \ge0$ for all 
$0 \le \phi \in C_c^{\infty}(B)$. 
Since $\ub=u$ in $2B$ and $\phi=0$ outside $B$, we have
\begin{align*}
\EE(\ub,\phi) &= \int_{2B} \int_{2B} |u(x)-u(y)|^{p-2} (u(x)-u(y))(\phi(x)-\phi(y)) k(x,y) \,dy\,dx \\
& \quad + \int_{B} \int_{(2B)^c} |\ub(x)-\ub(y)|^{p-2} (\ub(x)-\ub(y)) \phi(x) k(x,y) \,dy\,dx \\
& \quad + \int_{(2B)^c} \int_{B} |\ub(x)-\ub(y)|^{p-2} (\ub(y)-\ub(x)) \phi(y) k(x,y) \,dy\,dx,
\end{align*}
where the last two integrals are equal by the symmetry of $k$ and can be written as
\[
\int_{B} I(\ub,x) \phi(x) \,dx,
\]
with $I(\ub,x)$ as in~\eqref{eq-comp-u-ub-I}. 
Since $u$ is a supersolution in $B$, we have that 
\begin{align*}
0\le \EE(u,\phi) &= \int_{2B} \int_{2B} |u(x)-u(y)|^{p-2} (u(x)-u(y))(\phi(x)-\phi(y)) k(x, y) \,dy\,dx \\
& \quad + 2 \int_{B} \int_{(2B)^c} |u(x)-u(y)|^{p-2} (u(x)-u(y)) \phi(x) k(x,y) \,dy\,dx.
\end{align*}
Lemma~\ref{lem-comp-u-ub} shows that the last integral is majorized by the corresponding integral with $\ub$
and hence
\[
0 \le \EE(u,\phi) \le \EE(\ub,\phi).
\]
Thus $\ub$ is a supersolution in $B$.
\end{proof}

We are now ready to prove Theorem~\ref{thm-superh-finecont} for $sp \le n$.
For $sp>n$, we will need Proposition~\ref{prop-polar-iff}\ref{polar-a} when 
proving Theorem~\ref{thm-superh-finecont}. 
We therefore postpone that case to the end of this section.

\begin{proof}[Proof of Theorem~\ref{thm-superh-finecont} for $sp \le n$]
By Lemma~\ref{lem-sp<=n}, $\Csp(\{x\})=0$ for $x\in\R^n$.
Let $l$ be an arbitrary real number.
By the lower semicontinuity of $u$, the set $\{x\in\Om:u(x)\le l\}$
is closed (and hence finely closed).
We shall show that the set 
\[
E := \{x\in\Om:u(x)\ge l\}
\]
 is also finely closed.
By the lower semicontinuity of $u$, $E$ is a $G_\de$ set and in particular a Borel set.

It suffices to show that $E$ is thin at every $x\in\Om$ for which
$u(x)<l$.
Consider such an $x$ and find a ball $B=B(x,r)$
such that $2B\Subset\Om$. 
As $u$ is lower semicontinuous and $u>-\infty$ in $\Om$, it is locally bounded
from below in $\Om$.
By adding a constant, we may therefore assume that $u\ge0$ in $2B$
and   $l >0$.
Then $u_l=\min\{u,l\}$ is an $\LL$-superharmonic function in $\Om$, 
and hence a supersolution in $\Omega$ by Theorem~\ref{thm:KKP17}.
Corollary~\ref{cor-ub-supersol} implies
that for some $M\ge0$, the function
\[
\ub := \begin{cases}   u_l & \text{in } 2B, \\
         -M 
& \text{in } \R^n \setm 2B, 
   \end{cases}
\]
is a supersolution
in $B$.  
As $u$ is lsc-regularized in $B$, so is $\ub$.
Thus $\ub$ is also $\LL$-superharmonic in $B$, by Theorem~\ref{thm:KKP17}.
Replacing $\ub$ and $l$ by $\ub+M$ and $l+M$, respectively, 
we may assume that $\ub \ge0$ in $\R^n$, $l>0$ and $M=0$.

Assume that $E$ is not thin at $x$. 
Then the Wiener integral in Definition~\ref{deff-thinness} diverges.
By Remark~\ref{rmk-Wiener-sum}, this is equivalent to
\[
\sum_{j=1}^\infty \biggl(\frac{\csp(E\cap B^j,2B^j)}{r_j^{n-sp}}  \biggr)^{1/(p-1)}  = \infty,
\]
where $B^j=B(x,r_j)$ and 
$r_j=2^{-j}r$, $j=1,2,\dots$\,.
Since $E$ is a Borel set and $\csp$ is a Choquet
capacity, it follows from 
Theorem~\ref{thm-cap-inner-reg}
that we can find compact sets 
$K_j\subset E\cap B^j$
such that
\[
\csp(K_j,2B^j) \ge \tfrac12 \csp(E\cap B^j,2B^j),
\quad j=1,2,\dots.
\]
Then 
$K := \{x\} \cup \bigcup_{j=1}^\infty K_j \subset B$
is a compact set  and
\begin{equation}   \label{eq-choose-K}
\sum_{j=1}^\infty 
    \biggl(   \frac{\csp(K \cap B^j,2B^j)}{r_j^{n-sp}} \biggr)^{1/(p-1)} 
  \simge 
\sum_{j=1}^\infty 
 \biggl(   \frac{\csp(E \cap B^j,2B^j)}{r_j^{n-sp}} \biggr)^{1/(p-1)} 
= \infty.
\end{equation}
Next,  let $G= B\setm K$.
Consider a Dirichlet data $g\in \Lip_c(B)$ such that $g=l$ on~$K$.
Then by the lower semicontinuity of $\ub$,
\[
\liminf_{G\ni y\to z} \ub(y) \ge \ub(z) = l = g(z) \quad \text{for every }
z\in \bdy K \setm \{x\}
\]
and, since $\ub \ge 0$,
\[
\liminf_{G\ni y\to z} \ub(y) \ge 0 = g(z) \quad \text{for every } 
z\in \bdy B.
\]
Moreover,
\[
\ub \ge \begin{cases}   
        l=g & \text{in } K \setm \{x\}, \\
        0=g & \text{in } \R^n\setm B,
   \end{cases}
\]
that is, $\ub\ge g+h$ in $\R^n\setm G$, where $h=-l\chi_{\{x\}}$.
Thus, $\ub$ is admissible in the definition of the upper Perron
solution $\uP_G(g+h)$ in $G$, see Definition~1.1 in~\cite{BBK1}.
Since $\Csp(\{x\})=0$ (by Lemma~\ref{lem-sp<=n}), it follows from
 Theorem~9.4 in~\cite{BBK1} 
(with  $\Om$ replaced by $G$)
that
\[
\ub \ge \uP_G(g+h) = H_G g    \quad \text{in } G,
\]
where 
$H_Gg$ is the Sobolev solution as in Theorem~\ref{thm-ex-Hg}.
Since $K$ is not thin at~$x$, by \eqref{eq-choose-K},
the Wiener criterion (Theorem~\ref{thm-Wiener}),
together with the facts that $\ub$ is lsc-regularized
and 
$g=l$ in $K$ 
implies that 
\[
u_l(x) = \ub(x) 
= \essliminf_{B\ni y\to x} \ub(y)
= \essliminf_{G\ni y\to x} \ub(y) 
\ge \liminf_{G\ni y\to x} H_Gg(y) = g(x) = l,
\]
which contradicts the assumption $u(x)<l$ and concludes the proof.
\end{proof}

To prove Proposition~\ref{prop-polar-iff}\ref{polar-a},
we will use the following important logarithmic estimate
for supersolutions.
In \cite{DCKP16}, it is stated for 
$u \in \Wsp(\Rn)$ but the proof therein only uses that 
$u \in \Wsploc(2B) \cap L^{p-1}_{sp}(\Rn)$.
(Here we apply the estimate from~\cite{DCKP16}
to $u-1$ and with $d=1$.)

\begin{lem} \label{lem-DCKP-log}
\textup{(Di Castro--Kuusi--Palatucci~\cite[Lemma~1.3]{DCKP16})}
Let $B=B(x_0,r)$ be a ball
and  $u$
be a supersolution in $2B$ such that $u \ge 1$ in $2B$.
Let $v=\log u$ in  $B$. 
Then
\[
    [v]_{\Wsp(B)}^p 
    \simle  r^{n-sp}[1+\Tail((u-1)_-; x_0,2r)^{p-1}],
\]
where
\begin{equation*}
\Tail(f; x_0, r) = \biggl( r^{sp} \int_{\Rn \setminus B(x_0,r)} 
    \frac{|f(y)|^{p-1}}{|y-x_0|^{n+sp}} \,dy \biggr)^{1/(p-1)}.
\end{equation*}
\end{lem}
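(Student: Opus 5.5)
The plan is to follow the standard Caccioppoli-type argument of Di Castro--Kuusi--Palatucci, testing the supersolution inequality with a negative power of $u$ localized by a cutoff.

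\textbf{Test function.} Fix $\eta \in C_c^\infty(\tfrac32 B)$ with $0\le\eta\le1$, $\eta=1$ on $B$ and $|\grad\eta|\simle 1/r$. Set
\[
\phi=\eta^p u^{1-p}.
\]
Since $u\ge1$ in $2B$, the map $t\mapsto t^{1-p}$ is bounded and Lipschitz on $[1,\infty)$. Hence $u^{1-p}\in \Wsploc(2B)$, and by Lemma~\ref{lem-cutoff} and Lemma~\ref{lem-multiply-by-Lip} we get $\phi\in\Wsp(\Rn)$, $\phi\ge0$, with compact support in $2B$. A standard density argument (mollification, using $u\in \Wsploc(2B)\cap L^{p-1}_{sp}(\Rn)$) shows that $\EE(u,\phi)\ge0$ is still valid for this nonsmooth $\phi$.

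\textbf{Splitting.} Write $0\le \EE(u,\phi)=I_{\rm loc}+2I_{\rm nl}$, where $I_{\rm loc}$ is the integral over $2B\times 2B$ and $I_{\rm nl}$ is the integral over $x\in \tfrac32B$, $y\in(2B)^c$. For $I_{\rm loc}$ the key ingredient is the elementary pointwise inequality (the main obstacle; it is Lemma~3.1-type algebra in \cite{DCKP16}). For $a,b\ge1$ and $\eta_1,\eta_2\in[0,1]$,
\[
|a-b|^{p-2}(a-b)\bigl(\eta_1^p a^{1-p}-\eta_2^p b^{1-p}\bigr)
\le -c\,\Bigl|\log\frac ab\Bigr|^p \eta_2^p + C|\eta_1-\eta_2|^p .
\]
I would prove it by assuming $a\ge b$ by symmetry and distinguishing two cases.
\begin{enumerate}
\item If $b\le a\le 2b$, write $a=b(1+t)$ with $t\in[0,1]$. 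Use the convexity inequality $\eta_1^p\le \eta_2^p+\eps\eta_2^p+C_\eps|\eta_1-\eta_2|^p$, together with $1-(1+t)^{1-p}\simeq t\simeq \log(1+t)$.
\item If $a>2b$, the negative term $-(a-b)^{p-1}\eta_2^pb^{1-p}$ dominates. It gives a negative multiple of $\eta_2^p\ge$ a constant times $\eta_2^p(\log(a/b))^p$, using that $\log$ grows slower than linearly. Meanwhile the positive term is $\le(a-b)^{p-1}a^{1-p}\eta_1^p\le\eta_1^p$, which is absorbed via $\eta_1^p\le 2^{p-1}(\eta_2^p+|\eta_1-\eta_2|^p)$.
\end{enumerate}
Integrating against $k$ and using \eqref{eq-comp-(x,y)} gives
\[
\int_B\int_B\frac{|v(x)-v(y)|^p}{|x-y|^{n+sp}}\,dy\,dx
\simle \int_{2B}\int_{2B}\frac{|\eta(x)-\eta(y)|^p}{|x-y|^{n+sp}}\,dy\,dx + I_{\rm nl}.
\]
The first term on the right is $\simle r^{-p}\int_{2B}\int_{|x-y|<r}|x-y|^{p-n-sp}\,dy\,dx+\int_{2B}\int_{|x-y|\ge r}|x-y|^{-n-sp}\,dy\,dx\simle r^{n-sp}$.

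\textbf{Nonlocal term.} For $x\in\tfrac32B$ and $y\notin 2B$, only positive parts matter, so the integrand is at most $(u(x)-u(y))_+^{p-1}u(x)^{1-p}k(x,y)$. If $u(y)\ge1$, then $(u(x)-u(y))_+\le u(x)$. If $u(y)<1$, then $(u(x)-u(y))_+\le u(x)+(u(y)-1)_-$. In either case the integrand is bounded by $C\bigl(1+(u-1)_-(y)^{p-1}\bigr)|x-y|^{-n-sp}$, since $u(x)\ge1$. Moreover $|x-y|\ge\tfrac14|y-x_0|$ for such $x,y$. Integrating over $y\in(2B)^c$ and $x\in\tfrac32B$ therefore yields
\[
I_{\rm nl}\simle r^n\Bigl(r^{-sp}+\int_{(2B)^c}\frac{(u-1)_-(y)^{p-1}}{|y-x_0|^{n+sp}}\,dy\Bigr)
= r^{n-sp}\bigl[1+\Tail((u-1)_-;x_0,2r)^{p-1}\bigr].
\]
Combining the estimates for $I_{\rm loc}$ and $I_{\rm nl}$ gives the claimed bound on $[v]^p_{\Wsp(B)}$.
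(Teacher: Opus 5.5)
Your architecture is the Di Castro--Kuusi--Palatucci argument that the paper relies on. The paper does not reprove the lemma: it applies \cite[Lemma~1.3]{DCKP16} to $u-1$ with $d=1$, so DCKP's test function $\eta^p(u-1+d)^{1-p}$ is exactly your $\eta^p u^{1-p}$. Your test-function justification, the splitting, the cutoff estimate and the tail estimate are all fine.

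The gap is in the pointwise inequality, which you rightly call the crux. As stated for all $a,b\ge1$ it is false. For $a=1$, $b=M$, $\eta_1=0$, $\eta_2=1$ the left-hand side equals $(1-1/M)^{p-1}>0$, while the right-hand side is $-c(\log M)^p+C\to-\infty$. The left-hand side is invariant under $(a,\eta_1)\leftrightarrow(b,\eta_2)$ but the right-hand side is not, so ``by symmetry'' is not available. What is true is the version for $a\ge b$, i.e.\ with the cutoff taken at the point where $u$ is smaller, or with $\min\{\eta_1,\eta_2\}^p$ in place of $\eta_2^p$. This is harmless in the end, since $\eta=1$ on $B$.

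More seriously, your two cases do not prove even the $a\ge b$ version as written. With $t=a/b\ge1$ the expression equals $(1-t^{-1})^{p-1}(\eta_1^p-t^{p-1}\eta_2^p)$. After inserting $\eta_1^p\le(1+\eps)\eta_2^p+C\eps^{1-p}|\eta_1-\eta_2|^p$, the coefficient of $\eta_2^p$ is $(1-t^{-1})^{p-1}(1+\eps-t^{p-1})$.

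In Case~1, if $\eps$ is fixed, this coefficient is positive for $t$ close to $1$, so the gain of order $-(t-1)^p\eta_2^p$ is destroyed. You must take $\eps=\delta(t-1)$ with $\delta$ small; this is DCKP's choice $\eps=\delta\,(u(x)-u(y))/(u(x)+d)$. The resulting blow-up is then cancelled exactly, since $(1-t^{-1})^{p-1}\eps^{1-p}=\delta^{1-p}t^{1-p}\le\delta^{1-p}$. This choice is the heart of the lemma and is missing from your sketch.

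In Case~2, your absorption $\eta_1^p\le2^{p-1}(\eta_2^p+|\eta_1-\eta_2|^p)$ leaves the coefficient $2^{p-1}-(t-1)^{p-1}$ in front of $\eta_2^p$. This is nonnegative for $2<t\le3$, so the claimed bound fails there. There are two ways to fix it:
\begin{enumerate}
\item Begin Case~2 at $t\ge T$ with $(T-1)^{p-1}\ge2^p$. Case~1 with $\eps=\delta(t-1)$ works on any bounded range $1\le t\le T$, with $\delta$ depending on $T$.
\item For $t\ge2$, use a fixed $\eps\le\tfrac12(2^{p-1}-1)$. This gives a coefficient at most $-2^{-p}(t^{p-1}-1)\le-c(\log t)^p$, with a constant-size error term.
\end{enumerate}
With these repairs the rest of your argument goes through and yields the lemma.
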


\begin{lem} \label{lem-log-superh}
Let $B=B(x_0,r)$ be a ball
and  $u$ be an $\LL$-superharmonic  function in $2B$ 
such that $u \ge 1$ in $2B$.
Then $v:=\log u\in \Wsp(B)$.
\end{lem}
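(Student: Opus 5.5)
The plan is to reduce to bounded supersolutions by truncation, apply the logarithmic estimate of Lemma~\ref{lem-DCKP-log} uniformly, and pass to the limit by Fatou's lemma. For $k=2,3,\dots$ let $u_k:=\min\{u,k\}$. As noted after Definition~\ref{def:superharmonic}, $u_k$ is $\LL$-superharmonic in $2B$. Since $1\le u_k\le k$ in $2B$, it is locally bounded there, so Theorem~\ref{thm:KKP17} shows that $u_k$ is a supersolution in $2B$. Moreover $u_k\ge1$ in $2B$, so Lemma~\ref{lem-DCKP-log} applies to $u_k$ and gives
\[
[\log u_k]_{\Wsp(B)}^p \simle r^{n-sp}\bigl[1+\Tail((u_k-1)_-;x_0,2r)^{p-1}\bigr].
\]

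The key observation is that this right-hand side is bounded independently of $k$. Outside $2B$ we have $(u_k-1)_-\le (u-1)_-\le u_-+1$. Since $u_-\in L^{p-1}_{sp}(\Rn)$ by Definition~\ref{def:superharmonic}\ref{s-d}, the tail of $u_-+1$ over $\Rn\setm B(x_0,2r)$ is finite; here one uses $|y-x_0|\simeq 1+|y|$ up to constants depending on $x_0$ and $r$ when $|y-x_0|\ge 2r$. Hence
\[
\sup_k [\log u_k]_{\Wsp(B)}<\infty .
\]

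To pass to the limit, note that $u<\infty$ a.e., so $\log u_k\to \log u$ a.e.\ in $B$. Fatou's lemma applied to the double integral then gives $[\log u]_{\Wsp(B)}<\infty$. It remains to show $\log u\in L^p(B)$. Since $\log u_k\ge0$, the fractional Poincaré inequality on the ball $B$ bounds $\|\log u_k - (\log u_k)_B\|_{L^p(B)}$ uniformly in $k$. The averages $(\log u_k)_B$ stay bounded: otherwise the uniform oscillation bound would force $\log u_k\to\infty$ on a set of positive measure, contradicting $u<\infty$ a.e. A cleaner version of this step is to use Chebyshev on a fixed set of positive measure where $u\le N$. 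With the averages bounded, $\sup_k\|\log u_k\|_{L^p(B)}<\infty$, and Fatou gives $\log u\in L^p(B)$.

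I expect the main obstacle to be this last step: controlling the averages so as to obtain the $L^p$ part of the norm, rather than only the seminorm. The tail bound itself is routine.
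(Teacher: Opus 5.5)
Your proposal is correct and follows essentially the paper's route: truncate to $u_k=\min\{u,k\}$, apply the logarithmic estimate of Lemma~\ref{lem-DCKP-log} with a $k$-independent tail, and pass to the limit. Here you use Fatou's lemma, while the paper uses monotone convergence via $|u_j(x)-u_j(y)|\le|u_{j+1}(x)-u_{j+1}(y)|$. The $L^p$ part then comes from a set of positive measure where $u$ is bounded, and your ``cleaner version'' of that step is exactly the paper's argument.
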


\begin{proof}
Let $u_j=\min\{u,j\}$ in $\Rn$ and $v_j=\log u_j$ in $B$,
for positive integers $j$.
Then $u_j$ is a supersolution, by Theorem~\ref{thm:KKP17}.
Note that
\[
|u_j(x)-u_j(y)| \le |u_{j+1}(x)-u_{j+1}(y)|,
\]
by truncation.
It thus follows from
monotone  convergence
and Lemma~\ref{lem-DCKP-log} that
\[
   [v]_{\Wsp(B)}^p = \lim_{j \to \infty} [v_j]_{\Wsp(B)}^p
    \simle  r^{n-sp}[1+\Tail((u-1)_-; x_0,2r)^{p-1}] < \infty,
\]
since $(u-1)_- \in L_{sp}^{p-1}(\Rn)$ by 
Definition~\ref{def:superharmonic}\ref{s-d}.
Moreover, 
$u< \infty$ a.e.\ in $\Rn$ (by Definition~\ref{def:superharmonic}\ref{s-a}),
and 
there is thus $M \ge 1$ such that  the measure of
$E:=\{x \in B : |v(x)|<M\}$ is positive.
Since $|v(x)| \le  |v(x)-v(y)| + M$ 
for every $y\in E$, it then follows that 
\begin{align*}
\int_B |v|^p \, dx 
    & \simle   M^p |B| + \frac{r^{n+sp}}{|E|}   \int_{B}  \int_E 
                   \frac{|v(x)-v(y)|^p}{|x-y|^{n+s p}} \, dy\, dx \\
   &\le M^p|B| + \frac{r^{n+sp}}{|E|}[v]_{\Wsp(B)}^p
< \infty.
\qedhere
\end{align*}
\end{proof}

\begin{proof}[Proof of Proposition~\ref{prop-polar-iff}]
\ref{polar-a}
Fix a ball $B=B(x,r)$ such that $4B \Subset \Om$.  
Since $u$ is lower semicontinuous in $\Om$ and $>-\infty$ in $\Om$,
it is bounded from below in $4B$. 
We may assume that $u \ge 1$ in $4B$. 
Then $v:=\log u \in \Wsp(2B)$, by Lemma~\ref{lem-log-superh}.
It follows from  Lemma~\ref{lem-cutoff}
that there is a function $\vt \in \Wsp(\Rn)$
such that $\vt=v$ in~$B$.

Let $E=\{x \in \Om:u(x)=\infty\}$.
Then $\vt=\infty$ in $B \cap E$.
The set $G_l:=\{x \in B : \vt(x)>l\}$  is open,
because $u$ (and thus also $\vt$) is lower semicontinuous in $B$.
It follows that $\vt/l$, with $l>0$, is admissible for  $\Csp(G_l)$, 
as in Proposition~\ref{prop-Wsp-Csp-1}, and thus
\[
     \Csp(B \cap E) 
\le \Csp(G_l)
   \le \|\vt/l\|^p_{\Wsp(\Rn)}
  = \frac{1}{l^p}  \|\vt\|^p_{\Wsp(\Rn)}
   \to 0, \quad \text{as } l \to \infty.
\]
Hence $\Csp(B \cap E)=0$.
Using a countable cover by balls $4B_j \Subset \Om$ with
$\Om=\bigcup_{j=1}^\infty B_j$, together with the countable subadditivity of the 
capacity, concludes the proof of~\ref{polar-a}.

\ref{polar-conv}
By the definition \eqref{eq-cap-E} of $\Csp(E)$,  there are open sets
$G_j$  such that $E \subset G_j \subset \Om$ and $\Csp(G_j)< 2^{-jp}$, $j=1,2,\dots$\,.
For each $G_j$, it follows from Proposition~\ref{prop-Wsp-Csp-1} that
there is 
a function $\psi_j$ 
such that
$\psi_j\ge 1$ on $G_j$ and $\|\psi_j\|_{\Wsp(\R^n)}< 2^{-j}$.
Let 
\[
\phi= \sum_{j=1}^{\infty} (\psi_j)_+ \in \Wsp(\Rn).
\]
By Kim--Lee~\cite[Theorem~4.9]{KL}, there exists an lsc-regularized solution $u$
to the $\K_{\phi,\phi}(\Om)$-obstacle problem.
What is important here is that $u$ is an lsc-regularized  supersolution in $\Om$
and that $u \ge \phi$ a.e.
By Theorem~\ref{thm:KKP17}, $u$ is $\LL$-superharmonic in~$\Om$.

Moreover,  $u \ge \phi \ge l$ a.e.\ in 
the open set $\bigcap_{j=1}^{l} G_j \supset E$ 
and thus
(since $u$ is lsc-regularized)
$u \ge l$ everywhere therein, and in particular in $E$.
Letting $l \to \infty$ therefore shows that $u=\infty$ in $E$.
\end{proof}

\begin{proof}[Proof of Theorem~\ref{thm-superh-finecont} for $sp > n$]
Let $x_0 \in \Om$.
By the definition of $\LL$-super\-har\-mon\-i\-city,  $u(x_0) > -\infty$ in $\Om$.
Lemma~\ref{lem-sp<=n} shows that
$\Csp(\{x_0\})>0$ and thus
$u(x_0) < \infty$, by Proposition~\ref{prop-polar-iff}\ref{polar-a}.   
Let $v(x)=\min\{u(x),u(x_0)+1\}$,
which is an lsc-regularized supersolution in $\Om$,
by Theorem~\ref{thm:KKP17}.
In particular $v \in \Wsploc(\Om)$.
As $sp>n$, 
it is well known
that there is a continuous representative $\vt \in \Wsploc(\Om) \cap C(\Om)$ such that
$\vt =v$ a.e.,
see e.g.\  Triebel~\cite[(5), p.~203]{Triebel95}. 
Since $v$ is lsc-regularized, we see that
\[
    v(x) 
= \essliminf_{y\to x} v(y) 
= \essliminf_{y\to x} \vt(y) 
= \vt(x) 
\quad \text{for } x \in \Om.
\]
Hence $v$  is continuous in $\Om$, and thus $u$ is continuous at $x_0$.
\end{proof}

If $u$ is $\LL$-superharmonic in $\Om$,
then $E:=\{x \in \Om:u(x)=\infty\}$ is a $G_\de$-set, since
$u$ is lower semicontinuous in $\Om$.
The following is therefore a natural question.

\begin{openprob}\label{open-prob}
Let $E \subset \Rn$ be a $G_\de$-set with $\Csp(E)=0$.
Is there then an open set $\Om \supset E$ 
and an $\LL$-superharmonic function in $\Om$ such that
$E=\{x \in \Om:u(x)=\infty\}$?
\end{openprob}

In the local nonlinear case
for $\A$-harmonic functions of \p-Laplacian type, 
Kil\-pe\-l\"ai\-nen~\cite{Kilp99} 
showed this with $\Om=\Rn$.

\section{Theorem~\ref{thm-reg-inclusion-intro} and regularity
for different   \texorpdfstring{$(s,p)$}{(s,p)}}
\label{sect-different-sp}

\emph{Recall the general assumptions from the beginning
of Section~\ref{sec-sobolev}.}

\medskip

In this section, we will
also include the local case 
$\Delta_pu=0$  
in our investigations,
which is relevant for the comparison results below.
For notational purposes, we make the convention that
this corresponds to $s=1$, that is, it regards the Sobolev space
$\Wp$, even though the kernel $k(x,y)$ with $s=1$ does not make sense here.
In this case we define 
$C_{1,p}$  and $\capp_{1,p}$
to be the Sobolev (resp.\ condenser) capacity 
associated with the classical  
Sobolev (semi)norm 
\[
\|u\|_{\Wp(\Rn)}^p := \|u\|_{L^p(\Rn)}^p + [u]_{\Wp(\Rn)}^p
\quad \text{resp.} \quad
 [u]_{\Wp(\Rn)}^p := \int_{\R^n} |\grad u|^p\,dx
\]
as in
Definitions~\ref{deff-cpt} and \ref{deff-Csp}, 
together with \eqref{eq-cap-G} and~\eqref{eq-cap-E}.
For $s=1$ we use the definition of regular boundary points
from 
e.g.\ Heinonen--Kilpel\"ainen--Martio~\cite[p.~171]{HeKiMa}
or Mal\'y--Ziemer~\cite[Definition~2.131]{MZ}.
The Wiener criterion of 
Maz{\cprime}ya~\cite[Theorem, p.~236]{Mazya} (sufficiency),
Lindqvist--Martio~\cite{LM85} (necessity for $p>n-1$) and
Kilpel\"ainen--Mal\'y~\cite[Theorem~1.1]{KiMa94} (necessity for all $p>1$)
shows that regularity is characterized by the corresponding
Wiener integral with a condenser capacity, see alternatively \cite[Theorem~21.30]{HeKiMa}.
It follows
from Theorem~2.49 in~\cite{MZ}
that it
can equivalently be characterized by a Wiener integral
with a Sobolev capacity (provided that $p \le n$).

Our aim is now to prove Theorem~\ref{thm-reg-inclusion-intro}.
Since the formulation is rather complicated we repeat it here.

\begin{thm}[$=$Theorem~\ref{thm-reg-inclusion-intro}]
\label{thm-reg-inclusion}

Consider $0 < s_j \leq 1 < p_j$, $j=1,2$, with $(s_1, p_1) \ne (s_2, p_2)$.
The implication
\begin{equation}   \label{eq-imp-reg-sp}
\text{$x_0$ is regular for $(s_1,p_1)$}
\imp
\text{$x_0$ is regular for $(s_2,p_2)$}
\end{equation}
holds, for all bounded open sets $\Om$ with $x_0 \in \bdy \Om$,
if and only if
any of the following  mutually disjoint cases holds\/\textup{:}
\begin{enumerate}
\item
  $s_2 p_2 > n$,
\item \label{f-b}
  $s_1 p_1 = s_2 p_2 = n$ and $p_1> p_2$,
\item \label{f-c}
  $s_1 p_1 < s_2 p_2 = n$,
\item \label{f-d}
  $s_1 p_1 < s_2 p_2 < n$ and
  \begin{equation}   \label{eq-cond-Wiener-AH}
  \frac{s_1(p_2-1)+n}{p_2}
  \le  
  \frac{s_2(p_1-1)+n}{p_1}.
  \end{equation}
\end{enumerate}  
\end{thm}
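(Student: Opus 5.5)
The plan is to translate regularity into thinness and then use the Adams--Hedberg comparison of thinness for different Sobolev capacities. By Theorem~\ref{thm-Wiener} (and, for $s=1$, the Maz{\cprime}ya--Kilpel\"ainen--Mal\'y Wiener criterion), $x_0$ is regular for $(s,p)$ exactly when $\Omc$ is not $(s,p)$-thin at $x_0$. Here we use Lemma~\ref{lem-cap-with-tr} and Remark~\ref{rmk-Wiener-sum} to pass between $\clB$ and $B$, and between the radii $2r$ in the definition of thinness. When $sp\le n$, Theorem~\ref{thm-equiv-Wiener-int}, or \cite[Theorem~2.49]{MZ} when $s=1$, converts this into divergence of the Wiener integral \eqref{eq-int-Csp=infty} with the Sobolev capacity $\Csp$. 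By Remark~\ref{rmk-Bessel-cap} this capacity is comparable to the Bessel capacity of \cite{AH}, and the same holds for $s=1$ by Calder\'on. Thus \eqref{eq-imp-reg-sp} for all $\Om$ becomes the statement that every set $(s_2,p_2)$-thin at $x_0$ is $(s_1,p_1)$-thin at $x_0$, restricted to sets of the form $\Omc$. Since thinness at $x_0$ only depends on $E$ near $x_0$, and any $E$ with $x_0\in \clE\setm E$ can be realized as $\Omc$ near $x_0$ (take $\Om=B(x_0,1)\setm \clE$, adjusting with closures via outer regularity), the implication for all $\Om$ is equivalent to the inclusion of thin sets.

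Case (a) is handled separately. If $s_2p_2>n$, every boundary point is regular for $(s_2,p_2)$, since $\Omc\ni x_0$ is never thin: use \eqref{eq-cap-sp>n} as in the argument after Definition~\ref{deff-thinness}. Hence the implication holds trivially. Conversely, if $s_2p_2\le n$ and $s_1p_1>n$, the punctured ball $\Om=B(x_0,1)\setm\{x_0\}$ has $x_0$ regular for $(s_1,p_1)$ but irregular for $(s_2,p_2)$, by Lemma~\ref{lem-sp<=n} since $\capp_{s_2,p_2}(\{x_0\})=0$. This reduces everything to the case $s_jp_j\le n$ for $j=1,2$.

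In that case I would invoke Adams--Hedberg~\cite[Theorem~B]{AH84} (see also \cite[6.5.8]{AH}). It characterizes precisely when thinness for $C_{s_2,p_2}$ implies thinness for $C_{s_1,p_1}$, in terms of the Wiener integral \eqref{eq-int-Csp=infty}, and its conditions are exactly (b)--(d). Condition \eqref{eq-cond-Wiener-AH} is the Adams--Hedberg exponent condition rewritten. For sufficiency one just combines the reductions above. For necessity, if none of (b)--(d) holds, \cite{AH84} provides a set $E$ thin for one pair and not the other. Typical examples are unions of small balls $B(x_j,\rho_j)$ with $x_j\to x_0$, or, for $s_1p_1=s_2p_2=n$ with $p_1\le p_2$, Remark~\ref{rmk-half-pizza-sp=n}-type sets. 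One then sets $\Om=B(x_0,1)\setm \clE$, choosing $E$ closed away from $x_0$, to get a counterexample to \eqref{eq-imp-reg-sp}.

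The main obstacle is checking that the Adams--Hedberg results apply verbatim. Their thinness uses the Bessel capacity with the Wiener integral of \eqref{eq-int-Csp=infty} type, possibly with a different normalization such as $\sum_j (2^{j(n-sp)}C(E\cap A_j))^{1/(p-1)}$ over annuli. I expect to have to verify equivalence of these forms in the regime $sp\le n$ via Proposition~\ref{prop-half-pizza-sp<n} and, for $sp=n$, Proposition~\ref{prop-half-pizza-sp=n}. I also have to check that their counterexample sets can be taken compact apart from $x_0$ and that the case $s=1$ fits their framework. Disjointness of (a)--(d) is immediate from the inequalities.
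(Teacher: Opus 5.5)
Your outline is essentially the paper's proof. Regularity is turned into Adams--Hedberg thinness via Theorems~\ref{thm-Wiener} and~\ref{thm-equiv-Wiener-int} and Remark~\ref{rmk-Bessel-cap}. Case~(a) and the case $s_1p_1>n\ge s_2p_2$ are settled by whether points have positive capacity, using the punctured ball for the latter. Cases (b)--(d) come from \cite[Theorem~B]{AH84}, and the counterexamples of \cite[Section~4]{AH84} are made into complements of open sets by adding $x_0$ and using closed balls. Your claimed general equivalence with inclusion of thin sets ``via closures'' is not valid, since $E$ and $\clE$ can differ in thinness, but you never need it because only closed counterexamples are used.

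One correction concerns your aside on the critical case $s_1p_1=s_2p_2=n$, $p_1<p_2$. Remark~\ref{rmk-half-pizza-sp=n}-type unions of balls do not separate the two pairs: on the line $sp=n$ a single ball contributes $\csp(\clB(y,\rho),B(y,R))^{1/(p-1)}\simeq 1/\log(R/\rho)$ regardless of $p$, and the set in that remark is non-thin for every pair with $sp=n$. The paper instead takes $\{0\}\cup\bigcup_{j} 2^{-j}K$, where $K$ is a compact set with $C_{s_2,p_2}(K)=0<C_{s_1,p_1}(K)$ from \cite[Theorem~5.5]{AM73}, and uses the scaling $\csp(\la K,B(0,2\la))=\la^{n-sp}\csp(K,B(0,2))$. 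This also replaces the more involved case~(c) of \cite[Section~4]{AH84}.
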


\begin{proof} 
Recall that regularity for $sp \le n$ is characterized
by the Wiener 
condition \eqref{eq-int-Csp=infty}
 by Theorems~\ref{thm-Wiener} and~\ref{thm-equiv-Wiener-int},
and also that the Bessel potential capacity
(denoted $C_{s,p}$ in~\cite{AH})
is comparable to our Sobolev 
capacity $\Csp$, see Remark~\ref{rmk-Bessel-cap}.
Thus the $(s,p)$-thinness defined 
in~\cite[Definition~6.3.7]{AH} 
(for $sp \le n$)
is equivalent to the thinness defined
using~Definition~\ref{deff-thinness}.
A similar equivalence holds for $s=1$, see 
the discussion before the statement of Theorem~\ref{thm-reg-inclusion}.

The implication holds trivially when $s_2p_2>n$ since
points have positive capacity, and thus are regular,
by Lemma~\ref{lem-sp<=n} and Theorem~\ref{thm-Wiener}.

In the cases~\ref{f-b}--\ref{f-d}, the implication~\eqref{eq-imp-reg-sp} 
follows directly from (the positive direction in)
Theorem~B in Adams--Hedberg~\cite{AH84} (taking $(\al,p)=(s_2,p_2)$
and $(\be,q)=(s_1,p_1)$ therein). 

For the converse, in each remaining case we need to find a
bounded open set $\Om$ such that $0 \in \bdy \Om$ and the implication fails.
If $s_1 p_1 > n \ge s_2 p_2$ then with $\Om=B(0,1) \setm \{0\}$,
the origin $0$ is regular for $(s_1,p_1)$ but not for $(s_2,p_2)$.

When $s_j p_j \le n$, $j=1,2$, the counterexamples constructed in~\cite[Section~4]{AH84}
are  not closed sets
and thus do not have open complements (corresponding to $\Om$ here).
However, by adding the origin $0$ to the sets from~\cite{AH84}
and in addition using closed balls in their case (b), one obtains  compact sets.
For the reader's
convenience, we briefly describe the construction from~\cite{AH84}
and modify it to our purposes. 
Since we can work directly with the condenser capacity $\csp$, 
the case (c) in~\cite[Section~4]{AH84} can be simplified and 
included in case (a) therein as follows.

When $s_2 p_2 = s_1 p_1\le n$ and $p_1 < p_2$, or $s_2 p_2 < s_1 p_1\le n$,
Theorem~5.5 in Adams--Meyers~\cite{AM73}
(or Theorem~5.5.1 in Adams--Hedberg~\cite{AH})
implies that there is a compact set 
$K\subset B(0,1)\setm \itoverline{B(0,\tfrac12)}$ 
such that
\[
 C_{s_2,p_2}(K)=0< C_{s_1,p_1}(K).
\]
By Lemma~\ref{lem-cp-Cp}, $\capp_{s_1,p_1}(K,B(0,2))>0$.
It follows easily from Definition~\ref{deff-cpt} 
(and the definition of $\capp_{1,p}$ when $s=1$)
by a change of variables that for all $\la>0$ and $0<s\le 1<p$,
\[
\csp(\la K,B(0,2\la)) = \la^{n-sp} \csp(K,B(0,2)), \quad
\text{where } \la K:=\{\la x: x \in K\}.
\]
For 
\begin{equation}    \label{eq-def-Om}
\Om = B(0,1)\setm \biggl( \{0\} \cup \bigcup_{j=0}^\infty 2^{-j} K \biggr)
\end{equation}
we then have $C_{s_2,p_2}(B(0,1)\setm\Om)=0$ and so the origin $0$
is irregular with respect to $(s_2,p_2)$, by the Wiener criterion.
At the same time, with $B^j:=B(0,2^{-j})$,
\[
\sum_{j=0}^\infty   \biggl(\frac{\capp_{s_1,p_1}(B^j\setm \Om,2B^j)}{2^{-j(n-s_1p_1)}}
          \biggr)^{1/(p-1)} 
          \ge \sum_{j=0}^\infty \capp_{s_1,p_1}(K,B(0,2))^{1/(p-1)} = \infty,
\]
 so $0$ is regular with respect to $(s_1,p_1)$, by the Wiener criterion.
 
 If $s_1 p_1 < s_2 p_2 < n$ and \eqref{eq-cond-Wiener-AH} fails,
 then a counterexample is obtained by 
replacing $2^{-j} K$ in \eqref{eq-def-Om} with the closed balls
\[
\itoverline{B(x_j,r_j)}\subset 2B^j\setm B^j, 
  \quad  \text{where } r_j= \frac{2^{-j}}{j^{(p_1-1)/(n-s_1p_1)}},
\]
see~\cite[Section~4]{AH84} for the details. 
\end{proof}

We conclude the paper by discussing the conditions in Theorem~\ref{thm-reg-inclusion}.
Fix $0<s_1\le 1< p_1<\infty$ and assume that a point $x_0$ is regular for $(s_1, p_1)$.
We shall 
visualize the set of exponents $s$ and $p$ from Theorem~\ref{thm-reg-inclusion}
for which $x_0$  then must be regular as well.
It will be convenient to formulate this in terms of $t_1=1/p_1$ and $t=1/p$.
Consider
the square $(0,1)\times(0,1]$.
Given a point $(t_1,s_1)$ in this square, we are interested in the set of exponents $(t,s)$
in the square
for which we get regularity from Theorem~\ref{thm-reg-inclusion}.
Those conditions are now reformulated as\/\textup{:}
\begin{enumerate}
\item  \label{sq-a}
  $s/t > n$,
\item \label{sq-b}
  $s_1/t_1 = s/t = n$ and $t_1    < t$,
\item \label{sq-c}
  $s_1/t_1 < s/t = n$,
\item \label{sq-d}
  $s_1/t_1 < s/t < n$ and
  \begin{equation}   \label{eq-cq-Wiener-AH}
 s_1(1-t) + nt  \le s(1-t_1) + nt_1.
  \end{equation}
\end{enumerate}  
From \ref{sq-a} we see that 
the exponents in the ``trivial'' domain  $s >nt$
strictly above the ``critical'' line $s=nt$
always give regularity.
Now assume that $(t_1,s_1)$ is on the critical 
line, i.e.\ $s_1=nt_1$.
Then by \ref{sq-b} also all the exponents on this line with $t>t_1$ 
(i.e.\ above the point $(t_1,s_1)$)
make $x_0$ regular.
It follows from Theorem~\ref{thm-reg-inclusion} that
these are all the exponents for which $x_0$ is always
regular
(in both cases $s_1 =nt_1$ and $s_1 < n t_1$).

Next we study the case when $(t_1,s_1)$ is below the critical line, i.e.\ $s_1<nt_1$.
By  \ref{sq-a} and  \ref{sq-c}, exponents in the relatively closed set $s\ge nt$ give regularity
(i.e.\ in the trivial domain and on the critical line).
In  \ref{sq-d} we must have $s/t>s_1/t_1$, so 
$s>(s_1/t_1) t$ and thus
$(t,s)$ must lie above the line
$s = (s_1/t_1) t$ through $(0,0)$ and $(t_1,s_1)$. 
Moreover, condition~\eqref{eq-cq-Wiener-AH} is equivalent to
\begin{equation}   \label{eq-def-f}
s \ge  \frac{s_1-nt_1}{1-t_1} + \frac{n-s_1}{1-t_1}t 
=: a_1 + b_1t =: f(t).
\end{equation}
This is a straight line which passes 
through the points $(t_1,s_1)$, $(0,a_1)$  and 
$(1,n)$.
The last
point also lies on the critical line $s=nt$
(but outside the square $(0,1) \times (0,1]$).
Note that $a_1<0$ since $s_1 < n t_1$.
Thus, regularity with respect to $(s_1,p_1)\in (0,1) \times (1,\infty)$
implies regularity for $(t,s)$ in the sector
\[
\{(t,s) \in (0,1)\times(0,1]:  s > (s_1/t_1)t \text{ and } s\ge f(t) \},
\]
marked ``reg'' in Figure~\ref{fig-one}.

A similar analysis shows that if $x_0$ is irregular for $(t_1,s_1)$ with $s_1<nt_1$, 
then it is irregular for all exponents $(t,s)$ in the sector 
\[
\{(t,s) \in (0,1)\times(0,1]:  s < (s_1/t_1)t \text{ and } s\le f(t) \},
\]
marked ``irreg'' in Figure~\ref{fig-one}.

Since Theorem~\ref{thm-reg-inclusion} is an ``if and only if'' statement, it follows that no implications 
between (ir)regularity with respect to $(s_1,p_1)$ and $(s,p)$ hold when $(1/p,s)$ belongs
to the sectors 
\[
\{(t,s): (s_1/t_1)t \le s < f(t)\} \quad \text{and}  \quad
\{(t,s): f(t) < s \le (s_1/t_1)t\},
\]
marked ``??'' in Figure~\ref{fig-one}.

\begin{figure}[htbp]   
  \centering
    \includegraphics{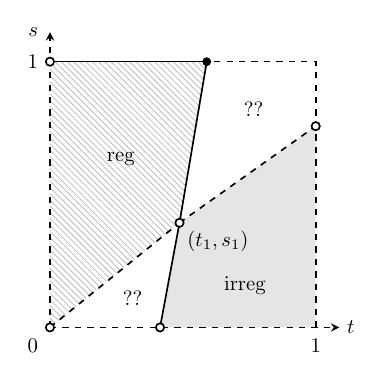}
  \caption{\label{fig-one}
This figure illustrates
when the implication \eqref{eq-imp-reg-sp} 
in Theorem~\ref{thm-reg-inclusion} holds
for $s_1 \le n t_1$, where $t_1=1/p_1$.
If $x_0 \in \bdy \Om$ is regular for $(t_1,s_1)$, then it is 
regular for all $(t,s)=(1/p,s)$ in the   
striped 
region marked ``reg''.
If on the other hand $x_0 \in \bdy \Om$ is irregular for $(t_1,s_1)=(1/p_1,s_1)$, 
then it is irregular for all $(t,s)$ in the grey region marked ``irreg''.
No implications hold for 
points $(t,s)$ in the white regions marked ``??''.
When $s_1 = n t_1$, the white regions marked ``??'' become empty.
}
\end{figure}

Finally, we analyse the relationship between the local case $s=1$ 
and the fractional cases $0<s<1$.
It follows from Theorem~\ref{thm-reg-inclusion} that regularity with respect to $(1,p_1)$ implies
regularity with respect to $(1,p)$ for all $p>p_1$.

From the above observations we conclude that regularity with respect to $(1,p_1)$ does not 
(apart from the trivial situation $sp>n$) imply regularity with respect to any $(s,p)\in (0,1) \times (1,\infty)$
when $p_1 \geq n$.
(In particular, regularity for $(1,n)$ is nontrivial, but not
strong enough to imply any nontrivial regularity for $(s,p)$ with $s<1$ 
and $sp \le n$.)
However, if $p_1<n$ (i.e.\ $t_1>1/n$), then 
regularity with respect to $(1, p_1)$
implies regularity with respect to any $(s, p)$ 
with $sp > p_1$.

Conversely, assuming regularity with respect to some 
$(s_1,p_1)\in (0,1) \times (1,\infty)$, 
regularity for $(1,p)$ is guaranteed
whenever one of the following conditions holds:
\begin{enumerate}
\renewcommand{\theenumi}{\textup{(\roman{enumi})}}%
\item
$p>n$ (by \ref{sq-a}), 
\item
$p=n\ge s_1p_1$ 
(by \ref{sq-b} or \ref{sq-c}),
\item
$1<p<n$ and $1\ge f(1/p)$ (by \ref{sq-d}) with $f$ as in \eqref{eq-def-f}.
\end{enumerate}

Note that if $s_2 \ge s_1$ and $p_2 \ge p_1$, then
\eqref{eq-imp-reg-sp} holds.


\end{document}